%% file: main.tex
\documentclass[11pt]{article}

\usepackage{amsmath,amssymb,amsthm,mathtools}
\usepackage{bm,booktabs,multirow,enumitem,graphicx,placeins}
\usepackage[authoryear,round]{natbib}
\usepackage[colorlinks,citecolor=blue,urlcolor=blue,linkcolor=blue]{hyperref}

\ifdefined\usingimsart
  \startlocaldefs
\else
  \usepackage[margin=2.7cm]{geometry}
  \newenvironment{frontmatter}{}{}
  \newcommand{\kwdsep}{}
  \newenvironment{keyword}[1][]{\par\noindent\gdef\kwdsep{}%
    \if\relax\detokenize{#1}\relax\textbf{Keywords:}\else\textbf{MSC2020 subject classifications:}\fi\ \kern0pt}{\unskip.\par}
  \newcommand{\kwd}[2][]{\unskip\if\relax\detokenize{#1}\relax\kwdsep\else#1\fi#2\gdef\kwdsep{, }}
  \newcommand{\runtitle}[1]{}\newcommand{\runauthor}[1]{}
\fi

\numberwithin{equation}{section}
\theoremstyle{plain}
\newtheorem{theorem}{Theorem}[section]
\newtheorem{proposition}[theorem]{Proposition}
\newtheorem{lemma}[theorem]{Lemma}
\newtheorem{corollary}[theorem]{Corollary}
\theoremstyle{remark}
\newtheorem{remark}[theorem]{Remark}
\theoremstyle{definition}
\newtheorem{definition}[theorem]{Definition}
\newtheorem{assumption}{Assumption}

\newcommand{\R}{\mathbb{R}}
\newcommand{\Rb}{\overline{\mathbb{R}}}
\newcommand{\bu}{\mathbf{u}}
\newcommand{\bv}{\mathbf{v}}
\newcommand{\bx}{\mathbf{x}}
\newcommand{\by}{\mathbf{y}}
\newcommand{\G}{\mathbb{G}}
\newcommand{\PP}{\mathbb{P}}
\newcommand{\EE}{\mathbb{E}}
\newcommand{\Ln}{\widehat L_n}
\newcommand{\ph}{\widehat p_n}
\newcommand{\that}{\widehat t_n}
\newcommand{\nG}[1]{\lVert #1\rVert_G}
\newcommand{\nI}[1]{\lVert #1\rVert_\infty}
\newcommand{\ip}[2]{\langle #1,#2\rangle_G}
\newcommand{\1}{\mathbb{I}}
\newcommand{\Dd}{D'_\ast}
\newcommand{\dto}{\rightsquigarrow}
\DeclareMathOperator*{\argmin}{arg\,min}

\ifdefined\usingimsart
  \endlocaldefs
\fi

\begin{document}

\ifdefined\usingimsart
\begin{frontmatter}
\title{Testing independence in contamination models}
\runtitle{Testing independence in contamination models}
\begin{aug}
\author[A]{\fnms{Olivier}~\snm{Lopez}}
\and
\author[B]{\fnms{Xavier}~\snm{Milhaud}\ead[label=e1]{xavier.milhaud@univ-lyon1.fr}}
\and
\author[C]{\fnms{Pierre}~\snm{Vandekerkhove}\ead[label=e2]{pierre.vandekerkhove@univ-eiffel.fr}}
\address[A]{CREST, ENSAE, Institut Polytechnique de Paris, 91120 Palaiseau, France}
\address[B]{Univ Lyon, UCBL, ISFA LSAF EA2429, F-69007 Lyon, France\printead[presep={\\ }]{e1}}
\address[C]{Universit\'e Gustave Eiffel, LAMA (UMR 8050), 77420 Champs-sur-Marne, France\printead[presep={\\ }]{e2}}
\runauthor{O. Lopez, X. Milhaud and P. Vandekerkhove}
\end{aug}
\else
\title{Testing independence in contamination models}
\author{Olivier Lopez$^{(1)}$ \and Xavier Milhaud$^{(2)}$ \and Pierre Vandekerkhove$^{(3)}$}
\date{\small $^{(1)}$CREST, ENSAE, Institut Polytechnique de Paris, 91120 Palaiseau, France\\
$^{(2)}$Univ Lyon, UCBL, ISFA LSAF EA2429, F-69007 Lyon, France\\
$^{(3)}$Universit\'e Gustave Eiffel, LAMA (UMR 8050), 77420 Champs-sur-Marne, France\\[1ex]
\today}
\maketitle
\fi

\begin{abstract}
We consider a bivariate contamination model $L=(1-p)F_0+pF$ in which the
distribution $F_0$ of a gold standard is known while the mixing weight $p$ and
the distribution $F$ of the contaminant component are unknown, and we test
whether the two coordinates of the unknown component are independent, i.e.\
$F=F_1\otimes F_2$. Classical tests of independence applied to the observed
sample are meaningless in this setting, since a mixture of product
distributions is not a product distribution. Following the Inversion--Best
Matching (IBM) principle, we invert the contamination model for every
candidate weight, which yields two parametric families of functions: the
inverted cdfs of the unknown component and the products of their marginals.
The weight is estimated by the value at which these two families are closest
in a Cram\'er--von Mises sense, and the empirical minimal distance scaled by
$n$ (the sample size) is the test statistic. We show that the discrepancy
between the two families has an exact quadratic structure in $1/p$. The
contrast is then a quartic polynomial in $1/p$ with explicit empirical
coefficients, so the weight estimator is obtained exactly, by comparing a
finite number of candidates: the roots of a cubic equation and the endpoints
of the parameter set. This structure also yields a sharp identifiability
condition under the null hypothesis, and a transparent asymptotic theory:
under the null, the weight estimator is $\sqrt n$-consistent and
asymptotically normal and the test statistic converges to a double
Wiener--It\^o integral with respect to a Brownian bridge, while it diverges
under any fixed alternative. The null distribution is calibrated by a
multiplier bootstrap whose validity is established. A simulation study
illustrates the finite-sample behaviour of the procedure and the failure of
naive independence tests. Two applications to gene expression data, in which
the known source is the exact law of pairs of randomised rank statistics for
null genes, show that the effects of differentially expressed genes are
dependent across replicates and between location and dispersion, while an
application to spam filtering, with a known source given by an archive of
legitimate emails, illustrates an unknown component with independent
coordinates.
\end{abstract}

\begin{keyword}[class=MSC]
\kwd[Primary ]{62G10}
\kwd{62G20}
\kwd[; secondary ]{62H15}
\kwd{62G09}
\end{keyword}

\begin{keyword}
\kwd{contamination model}
\kwd{admixture model}
\kwd{Cram\'er--von Mises statistic}
\kwd{empirical process}
\kwd{independence test}
\kwd{multiplier bootstrap}
\kwd{semiparametric inference}
\end{keyword}

\ifdefined\usingimsart
\end{frontmatter}
\fi

\section{Introduction}\label{sec:intro}

Contamination (or admixture) models describe a population in which a known
``background'' source is mixed, in unknown proportion, with an unknown source
of interest. In its simplest form, one observes an i.i.d.\ sample from a
cumulative distribution function (cdf)
\begin{equation}\label{eq:model}
  L(\bu) \;=\; (1-p^\ast)\,F_0(\bu) + p^\ast F(\bu),
  \qquad \bu=(u_1,u_2)^\top\in\R^2,
\end{equation}
where the cdf $F_0$ is known while the weight $p^\ast\in(0,1)$ and the cdf $F$
are unknown. Such models arise in multiple testing, where $F_0$ is the null
distribution of $p$-values or test statistics \citep{Efron2001,PatraSen2016},
in astronomy and genetics, and in actuarial science, where a portfolio is
partly driven by a reference mortality table and partly by an unknown excess
component \citep{MPSV2024,MPSV2024b}. Estimation of $(p^\ast,F)$ in the
univariate case has been studied under various shape constraints
\citep{BDV2006,BMV2006,PatraSen2016}; testing problems involving the unknown
component were addressed in \citet{PommeretVandekerkhove2019},
\citet{MPSV2022} and \citet{MPSV2024}, the latter introducing the
Inversion--Best Matching (IBM) methodology on which the present work builds.

In the bivariate setting~\eqref{eq:model}, a natural and practically important
question is whether the two coordinates of the \emph{unobserved} component are
independent, that is, whether
\begin{equation}\label{eq:hyp}
  H_0:\ F = F_1\otimes F_2
  \qquad\text{against}\qquad
  H_1:\ F \neq F_1\otimes F_2,
\end{equation}
where $F_1,F_2$ denote the marginal cdfs of $F$ and
$(F_1\otimes F_2)(\bu)=F_1(u_1)F_2(u_2)$. In the actuarial example, $H_0$ means
that, once the known reference mortality has been filtered out, the residual
lifetimes of two related individuals do not depend on each other; in multiple
testing, it means that two statistics computed on the non-null units are
independent.

Classical tests of independence, based on Hoeffding's or the
Blum--Kiefer--Rosenblatt (BKR) process $\Ln-\widehat L_{n,1}\otimes \widehat
L_{n,2}$ \citep{Hoeffding1948,BKR1961,Deheuvels1981}, or on the empirical
copula \citep{GenestRemillard2004,KojadinovicHolmes2009}, test independence of
the \emph{observed} vector and are therefore inadequate here: even when both
$F_0$ and $F$ are product cdfs, $L$ is a mixture of two product cdfs and is in
general not a product. Such tests reject $H_0$ with probability tending to one
under the null hypothesis~\eqref{eq:hyp}; see Section~\ref{sec:simu}.

The IBM principle proceeds in two steps. In the \emph{inversion} step, the
contamination model is inverted for every candidate weight $p$. This produces two
parametric families indexed by $p$: the family $\mathcal F_1$ of candidate
cdfs $F(\cdot\,;p)$ for the unknown component, and the family $\mathcal F_2$
of the products $F_1(\cdot\,;p)\otimes F_2(\cdot\,;p)$ of their marginals.
Under $H_0$ the two families meet at the true weight. In the
\emph{best-matching} step, we look for the weight at which the two families
are closest in an integrated squared (Cram\'er--von Mises) sense. The
minimiser estimates the weight, and the minimal distance is the test
statistic. Our main contributions are the following.
\begin{enumerate}[label=(\roman*),leftmargin=*]
\item We show (Lemma~\ref{lem:algebra}) that the discrepancy between the
  two inverted families is an exact quadratic polynomial in $t=1/p$, whose coefficients are explicit
  functionals of the dependence of $F_0$, of the dependence of $F$ and of the
  marginal gap $(F_1-F_{0,1})\otimes(F_2-F_{0,2})$. This yields identification conditions for $p^\ast$ under $H_0$
  and for the separation of the families under $H_1$
  (Proposition~\ref{prop:ident}). It also makes the estimator of the weight
  exactly computable, without numerical optimisation. The empirical contrast is a
  quartic polynomial in $t$ with explicit coefficients, so its minimiser is either
  an endpoint of the parameter set or one of the at most three real roots of a
  cubic equation, and these roots are given by Cardano's formula.
\item We establish almost sure rates of convergence under both hypotheses
  (Theorem~\ref{thm:consistency}) and a joint functional central limit theorem
  for the estimated weight and the estimated discrepancy around their
  pseudo-true values, valid under both hypotheses (Theorem~\ref{thm:unified}).
  The test statistic is an exact quadratic functional of this pair
  (Corollary~\ref{cor:repr}), which yields the joint asymptotic distribution of the
  weight estimator and of the test statistic under $H_0$
  (Corollary~\ref{thm:null}), and, under a fixed alternative, central limit
  theorems for the weight estimator and for the contrast around their pseudo-true
  values (Corollary~\ref{thm:alt}). These yield consistency of the test and an
  asymptotic approximation of its power.
\item We propose a multiplier bootstrap calibration and prove its validity
  (Theorem~\ref{thm:bootstrap}); the procedure is tuning-free apart from the
  choice of the integrating measure and of a range for the weight.
\end{enumerate}

The paper is organised as follows. Section~\ref{sec:model} introduces the inverted
parametric families and the IBM contrast, and studies identifiability. Section~\ref{sec:asympt} contains the
estimator, the test and the asymptotic theory. Section~\ref{sec:simu} reports
a simulation study, Section~\ref{sec:real} applications to gene
expression data and to spam filtering, and Section~\ref{sec:discussion} discusses extensions. All
proofs are deferred to the Appendix.

\paragraph{Notation.} Vectors are written in bold. $\Rb=[-\infty,+\infty]$ and
$\bu\le\bv$ is understood componentwise. For a bounded function $w$ on
$\Rb^2$ we write $w^{(1)}(x)=w(x,+\infty)$ and $w^{(2)}(y)=w(+\infty,y)$ for its
``marginals'', and $(f\otimes g)(\bu)=f(u_1)g(u_2)$ for functions $f,g$ on
$\Rb$. $\ell^\infty(\Rb^2)$ is the space of bounded functions with the sup-norm
$\nI{\cdot}$. $G$ is a probability measure on $\R^2$ (the integrating
measure) and $\ip{f}{g}=\int fg\,dG$, $\nG{f}=\ip ff^{1/2}\le\nI f$. Weak
convergence in the sense of Hoffmann-J{\o}rgensen is denoted by $\dto$
\citep{vdVW1996}. Two parametrisations of the weight are used: $p$ and its
inverse $t=1/p$. Dots denote derivatives with respect to $p$ (e.g.\ $\dot D$,
$\ddot d$), and primes denote derivatives with respect to $t$ (e.g.\ $D'$,
$d''$). They are linked by the chain rule $\partial_tf(1/t)=-p^2\dot f(p)$;
see~\eqref{eq:chain}.

\section{Inverted parametric families and identifiability}\label{sec:model}

\subsection{Inversion and best matching}

Let $F_{0,1},F_{0,2}$ and $L_1,L_2$ be the marginal cdfs of $F_0$ and $L$. The
marginals of~\eqref{eq:model} satisfy
$L_j=(1-p^\ast)F_{0,j}+p^\ast F_j$, $j=1,2$. Since $F_0$ is known, model
\eqref{eq:model} can be inverted for every candidate weight $p$. This
\emph{inversion step} produces two parametric families of functions on $\R^2$,
indexed by the weight. The first one is the family of candidate cdfs for the
unknown component,
\begin{equation}\label{eq:family1}
  \mathcal F_1=\Bigl\{F(\cdot\,;p)=\frac{L(\cdot)-(1-p)F_0(\cdot)}{p},\ p\in\Theta\Bigr\},
\end{equation}
and the second one is the family of products of their marginals,
\begin{equation}\label{eq:family2}
  \mathcal F_2=\Bigl\{F_{1\otimes2}(\cdot\,;p)=F_1(\cdot\,;p)\otimes F_2(\cdot\,;p),\
  F_j(\cdot\,;p)=\frac{L_j(\cdot)-(1-p)F_{0,j}(\cdot)}{p},\ j=1,2,\ p\in\Theta\Bigr\},
\end{equation}
where the weight is searched over a compact set
$\Theta=[\theta_l,\theta_u]\subset(0,1]$. Note that $F_j(\cdot\,;p)$ is
exactly the $j$th marginal of $F(\cdot\,;p)$, and that
$F(\cdot\,;p^\ast)=F$ and $F_j(\cdot\,;p^\ast)=F_j$. Hence $H_0$ holds if and
only if the two families intersect at $p=p^\ast$. The \emph{best-matching
step} measures how close the two families can be brought together through
the discrepancy process and the IBM contrast
\begin{equation}\label{eq:D}
  D(\bu;L,p)=F(\bu;p)-F_{1\otimes2}(\bu;p),\qquad
  d(p)=\int_{\R^2}D^2(\bu;L,p)\,dG(\bu)=\nG{D(\cdot\,;L,p)}^2 .
\end{equation}
Under $H_0$, $d(p^\ast)=0$. In the proofs it is convenient to reparametrise by
$t=1/p\in T=[t_l,t_u]$, with $t_l=1/\theta_u\ge1$ and $t_u=1/\theta_l<\infty$;
we write $t^\ast=1/p^\ast$.

\subsection{Explicit form of the inverted families}

Inserting $L=(1-p^\ast)F_0+p^\ast F$ and
$L_j=(1-p^\ast)F_{0,j}+p^\ast F_j$ into~\eqref{eq:family1}--\eqref{eq:family2}
shows that each inverted function is an explicit combination of the true
source and of the known one:
\begin{equation}\label{eq:inverted-explicit}
  F(\bu;p)=\frac{(p-p^\ast)F_0(\bu)+p^\ast F(\bu)}{p},\qquad
  F_j(u_j;p)=\frac{(p-p^\ast)F_{0,j}(u_j)+p^\ast F_j(u_j)}{p},\quad j=1,2.
\end{equation}
Define the known function $K_0$, the dependence function $K$ of the unknown
component and the marginal-gap function $\Delta$ by
\begin{equation}\label{eq:KDelta}
  K_0=F_0-F_{0,1}\otimes F_{0,2},\qquad
  K=F-F_1\otimes F_2,\qquad
  \Delta=(F_1-F_{0,1})\otimes(F_2-F_{0,2}),
\end{equation}
and the $L$-measurable functions
\begin{equation}\label{eq:ABC}
  A=L-F_0,\quad A_j=L_j-F_{0,j},\quad
  B^\ast=A-A_1\otimes F_{0,2}-F_{0,1}\otimes A_2,\quad
  C^\ast=A_1\otimes A_2 .
\end{equation}
The functions $B^\ast$ and $C^\ast$ do not depend on the candidate weight $p$.
They are determined by the true model, through $L=(1-p^\ast)F_0+p^\ast F$,
and the star stresses this dependence on $p^\ast$: by
Lemma~\ref{lem:algebra}(iii) below, $B^\ast=p^\ast(K+\Delta-K_0)$ and
$C^\ast=p^{\ast2}\Delta$. Their empirical counterparts $B_n$ and $C_n$, defined
in Section~\ref{sec:asympt}, are obtained by replacing $L$ with $\Ln$.

\begin{lemma}[Structure of the discrepancy]\label{lem:algebra}
For every $p\in(0,1]$, with $t=1/p$ and $s=p^\ast/p=p^\ast t$, the following hold.
\begin{enumerate}[label=(\roman*),leftmargin=*]
\item \emph{Expansion on the sources.}
\begin{equation}\label{eq:expansion}
\begin{aligned}
  p^2D(\cdot\,;L,p)={}&p\,p^\ast F-p^{\ast2}F_1\otimes F_2
  +(p-p^\ast)\bigl[p\,F_0-p^\ast\bigl(F_{0,1}\otimes F_2+F_1\otimes F_{0,2}\bigr)\bigr]\\
  &-(p-p^\ast)^2F_{0,1}\otimes F_{0,2}.
\end{aligned}
\end{equation}
\item \emph{Dependence form.}
\begin{equation}\label{eq:structure}
  D(\cdot\,;L,p)=(1-s)\,K_0+s(1-s)\,\Delta+s\,K .
\end{equation}
In particular, under $H_0$,
\begin{equation}\label{eq:structureH0}
  D(\cdot\,;L,p)=(1-s)\bigl(K_0+s\Delta\bigr)
  =(t-t^\ast)\,\Dd-p^{\ast2}(t-t^\ast)^2\Delta,
  \qquad \Dd:=-p^\ast\,(K_0+\Delta).
\end{equation}
\item \emph{Observable form.}
\begin{equation}\label{eq:poly}
  D(\cdot\,;L,p)=K_0+\frac{B^\ast}{p}-\frac{C^\ast}{p^2}=K_0+t\,B^\ast-t^2\,C^\ast ,
\end{equation}
where the coefficients, computed from $L$ and $F_0$ through~\eqref{eq:ABC},
are expressed in terms of the sources as
\begin{equation}\label{eq:BC}
  B^\ast=p^\ast\,(K+\Delta-K_0),\qquad C^\ast=p^{\ast2}\,\Delta .
\end{equation}
\end{enumerate}
\end{lemma}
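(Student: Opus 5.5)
The whole lemma is elementary algebra. The plan is to start from the explicit form \eqref{eq:inverted-explicit} of the inverted functions and expand bilinearly; no analytic input is needed. Write $F(\cdot;p)=p^{-1}[(p-p^\ast)F_0+p^\ast F]$ and $F_j(\cdot;p)=p^{-1}[(p-p^\ast)F_{0,j}+p^\ast F_j]$.

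For (i), multiply the definition \eqref{eq:D} of $D$ by $p^2$. The first term becomes $p(p-p^\ast)F_0+pp^\ast F$. The product of marginals becomes
\[
(p-p^\ast)^2F_{0,1}\otimes F_{0,2}+(p-p^\ast)p^\ast\bigl(F_{0,1}\otimes F_2+F_1\otimes F_{0,2}\bigr)+p^{\ast2}F_1\otimes F_2.
\]
Subtracting and grouping the $(p-p^\ast)$ terms gives \eqref{eq:expansion} directly.

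For (ii), divide by $p^2$ and set $s=p^\ast/p$, so that $(p-p^\ast)/p=1-s$. This gives
\[
D=sF-s^2F_1\otimes F_2+(1-s)F_0-s(1-s)\bigl(F_{0,1}\otimes F_2+F_1\otimes F_{0,2}\bigr)-(1-s)^2F_{0,1}\otimes F_{0,2}.
\]
Substitute $F=K+F_1\otimes F_2$ and $F_0=K_0+F_{0,1}\otimes F_{0,2}$; this produces the terms $sK+(1-s)K_0$. The remaining product terms must reassemble into $s(1-s)\Delta$. To check this, expand
\[
\Delta=F_1\otimes F_2-F_1\otimes F_{0,2}-F_{0,1}\otimes F_2+F_{0,1}\otimes F_{0,2}
\]
and compare coefficients:
\begin{itemize}
\item $F_1\otimes F_2$ has coefficient $s-s^2=s(1-s)$;
\item each cross term has coefficient $-s(1-s)$;
\item $F_{0,1}\otimes F_{0,2}$ has coefficient $(1-s)-(1-s)^2=s(1-s)$.
\end{itemize}
This establishes \eqref{eq:structure}.

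Under $H_0$ we have $K=0$, so $D=(1-s)(K_0+s\Delta)$. Using $1-s=-p^\ast(t-t^\ast)$ and $s=1-(1-s)$, this equals
\[
(1-s)(K_0+\Delta)-(1-s)^2\Delta=(t-t^\ast)\Dd-p^{\ast2}(t-t^\ast)^2\Delta,
\]
which is \eqref{eq:structureH0}.

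For (iii), insert $L=F_0+A$ and $L_j=F_{0,j}+A_j$ into \eqref{eq:family1}--\eqref{eq:family2}. This gives $F(\cdot;p)=F_0+tA$ and $F_j(\cdot;p)=F_{0,j}+tA_j$. Expanding the product of marginals yields
\[
D=K_0+t\bigl(A-A_1\otimes F_{0,2}-F_{0,1}\otimes A_2\bigr)-t^2A_1\otimes A_2,
\]
which is \eqref{eq:poly}. This route is actually simpler than (i) and needs no model assumption. Finally, \eqref{eq:BC} follows by matching powers of $s=p^\ast t$ in \eqref{eq:structure}, written as $K_0+s(K+\Delta-K_0)-s^2\Delta$, against \eqref{eq:poly}. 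Both sides are polynomials in $t$ that agree for all $t\ge1$, so their coefficients coincide. Alternatively, one can compute directly from $A=p^\ast(F-F_0)$ and $A_j=p^\ast(F_j-F_{0,j})$.

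No step is a genuine obstacle. The only care needed is the bookkeeping of the cross terms in (ii), where the coefficients must collapse to the single factor $s(1-s)\Delta$; I would organise that step via the coefficient table above.
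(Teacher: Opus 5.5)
Your proposal is correct and follows the paper's proof essentially step by step. Part~(i) uses the same expansion of \eqref{eq:inverted-explicit}. In~(ii) you divide by $p^2$ before regrouping, whereas the paper regroups first into the bridge identity \eqref{eq:bridge}, but the coefficient bookkeeping is the same. Part~(iii) uses the same substitution $F(\cdot\,;p)=F_0+tA$, and for \eqref{eq:BC} your coefficient matching against \eqref{eq:structure} is valid; the direct computation from $A=p^\ast(F-F_0)$ that you list as an alternative is the route the paper takes.
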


Forms~(i) and~(ii) are linked by multiplying~\eqref{eq:structure} by $p^2$:
\begin{equation}\label{eq:bridge}
  p^2D(\cdot\,;L,p)=p(p-p^\ast)\,K_0+p^\ast(p-p^\ast)\,\Delta+p\,p^\ast K .
\end{equation}
Expanding $K_0$, $\Delta$ and $K$ gives~\eqref{eq:expansion}; conversely,
\eqref{eq:bridge} is obtained from~\eqref{eq:expansion} in three steps
(proof of Lemma~\ref{lem:algebra} in Appendix~\ref{app:proofs}). Note that $K_0$
does not appear as a block in~\eqref{eq:expansion}: its part
$-p(p-p^\ast)F_{0,1}\otimes F_{0,2}$ combines with the part
$+p^\ast(p-p^\ast)F_{0,1}\otimes F_{0,2}$ coming from $\Delta$ into
$-(p-p^\ast)^2F_{0,1}\otimes F_{0,2}$.

The three forms serve different purposes. The expansion~\eqref{eq:expansion}
writes the discrepancy between the two families directly in terms of the
sources and leads to the identification conditions below. The dependence
form~\eqref{eq:structure} explains what the contrast measures: the
discrepancy at $p$ combines the dependence of the known source, the marginal gap between the two sources and the dependence
of the unknown component. The observable form~\eqref{eq:poly} involves only
$L$ and the known $F_0$ and is the basis of the estimator of
Section~\ref{sec:asympt}: $D$ is a quadratic polynomial in $1/p$. Finally,
$\Dd$ is the derivative of $t\mapsto D(\cdot\,;L,1/t)$ at $t^\ast$, as
follows by differentiating~\eqref{eq:structureH0} in $t$; its relation to the
derivative in $p$ is given in Lemma~\ref{lem:contrast}(ii).

\subsection{Identifiability}

Since only $L$ is observed, the pair $(p^\ast,F)$ in~\eqref{eq:model} is in
general not identifiable \citep{BDV2006,PatraSen2016}. What the independence
constraint provides is precisely a way to single out $p^\ast$ under $H_0$.
The identification conditions below are stated in terms of the following
notion.

\begin{definition}[Linear independence in $L^2(G)$]\label{def:linind}
Let $G$ be a probability measure on $\R^2$ and let $f_1,\dots,f_m$ be bounded
measurable functions on $\R^2$. The family $\{f_1,\dots,f_m\}$ is
\emph{linearly independent in $L^2(G)$} if, for every
$\lambda=(\lambda_1,\dots,\lambda_m)^\top\in\R^m$,
\[
  \sum_{k=1}^m\lambda_kf_k(\bu)=0\quad\text{for $G$-almost every }\bu\in\R^2
  \qquad\Longrightarrow\qquad\lambda_1=\dots=\lambda_m=0 .
\]
\end{definition}

Since
$\bigl\lVert\sum_k\lambda_kf_k\bigr\rVert_G^2=\lambda^\top M_G\lambda$, with
Gram matrix $M_G=\bigl(\ip{f_k}{f_l}\bigr)_{1\le k,l\le m}$, the family is
linearly independent in $L^2(G)$ if and only if $M_G$ is non-singular. Equivalently,
its smallest eigenvalue $\mu_G=\lambda_{\min}(M_G)$ is positive, in which case
\begin{equation}\label{eq:gram}
  \Bigl\lVert\sum_{k=1}^m\lambda_kf_k\Bigr\rVert_G\ \ge\ \sqrt{\mu_G}\,|\lambda|\qquad\text{for all }\lambda\in\R^m .
\end{equation}
The notion depends on $G$ only through the sets it charges. Linear
independence in $L^2(G)$ implies ordinary linear independence of the
$f_k$ as functions on $\R^2$, and the two notions coincide when the $f_k$ are
continuous and $G$ has full support. For the empirical choice $G=\Ln$ used in
Section~\ref{sec:simu}, the definition reduces to the $n\times m$ matrix
$\bigl(f_k(\bm X_i)\bigr)_{i,k}$ having rank $m$.

Expansion~\eqref{eq:expansion} suggests the following identification
conditions under $H_0$ and $H_1$, respectively.

\medskip
\noindent\textbf{(I)} The family
$\{F_1\otimes F_2,\ F_0,\ F_{0,1}\otimes F_2+F_1\otimes F_{0,2},\ F_{0,1}\otimes F_{0,2}\}$
is linearly independent in $L^2(G)$.

\smallskip
\noindent\textbf{(II)} The family
$\{F,\ F_1\otimes F_2,\ F_0,\ F_{0,1}\otimes F_2+F_1\otimes F_{0,2},\ F_{0,1}\otimes F_{0,2}\}$
is linearly independent in $L^2(G)$.
\medskip

Condition~(I) excludes a known source of product form. The
asymptotic theory only requires the following weaker condition. Let
$S=\{p^\ast/p:\ p\in\Theta\}=[p^\ast t_l,\,p^\ast t_u]$, which contains $1$
when $p^\ast\in\Theta$.

\begin{assumption}\label{A:theta}
$p^\ast$ belongs to the interior of $\Theta=[\theta_l,\theta_u]\subset(0,1]$.
\end{assumption}

\begin{assumption}\label{A:ident}
$c_G:=\inf_{s\in S}\nG{K_0+s\Delta}>0$.
\end{assumption}

Since $s\mapsto\nG{K_0+s\Delta}^2$ is a quadratic polynomial, Assumption
\ref{A:ident} fails only if $K_0=-s\Delta$ $G$-a.e.\ for some $s\in S$. It
holds under~(I) (Proposition~\ref{prop:ident}), and more generally whenever
$K_0$ and $\Delta$ are linearly independent in $L^2(G)$. Unlike~(I), it
allows $F_0=F_{0,1}\otimes F_{0,2}$: then $K_0=0$ and \ref{A:ident} reduces to
$\nG{\Delta}>0$, i.e.\ both marginals of the unknown component differ from
those of the known one on a set charged by $G$.

\begin{proposition}\label{prop:ident}
\begin{enumerate}[label=(\roman*),leftmargin=*]
\item Condition~(I) implies Assumption~\ref{A:ident} for every compact $\Theta\subset(0,1]$.
\item Under $H_0$ and~\ref{A:ident}, the mapping $p\mapsto d(p)$ is a contrast
function on $\Theta$: for every $p\in\Theta$,
\begin{equation}\label{eq:lowerbound}
  \nG{D(\cdot\,;L,p)}\;\ge\;c_G\,p^\ast\,|t-t^\ast|,\qquad t=1/p,
\end{equation}
so that $d(p)=0$ if and only if $p=p^\ast$, and $d(p)\ge c_G^2p^{\ast2}(1/p-1/p^\ast)^2$.
\item Under $H_1$, let $d_{\min}=\min_{p\in\Theta}d(p)$. Then $d_{\min}>0$ if
and only if $F(\cdot\,;p)\neq F_{1\otimes2}(\cdot\,;p)$ on a set of positive
$G$-measure for every $p\in\Theta$, i.e.\ if and only if the families
$\mathcal F_1$ and $\mathcal F_2$ do not intersect. This holds under
condition~(II), and more generally whenever
$K\notin\operatorname{span}\{K_0,\Delta\}$ in $L^2(G)$.
\end{enumerate}
\end{proposition}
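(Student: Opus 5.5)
The whole argument runs through Lemma~\ref{lem:algebra}. For (i), I expand $K_0+s\Delta$ on the four functions of condition~(I). With $K_0=F_0-F_{0,1}\otimes F_{0,2}$ and
$\Delta=F_1\otimes F_2-(F_{0,1}\otimes F_2+F_1\otimes F_{0,2})+F_{0,1}\otimes F_{0,2}$, we get
\[
K_0+s\Delta=s\,F_1\otimes F_2+F_0-s\,(F_{0,1}\otimes F_2+F_1\otimes F_{0,2})+(s-1)F_{0,1}\otimes F_{0,2}.
\]
The coefficient vector is $\lambda(s)=(s,1,-s,s-1)$, and $|\lambda(s)|\ge1$ because its second entry is $1$. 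By~\eqref{eq:gram}, $\nG{K_0+s\Delta}\ge\sqrt{\mu_G}>0$ for every $s\in\R$. In particular the infimum over $S$ is positive, whatever the compact $\Theta$.

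For (ii), under $H_0$ the form~\eqref{eq:structureH0} gives $D(\cdot\,;L,p)=(1-s)(K_0+s\Delta)$ with $s=p^\ast/p\in S$ for $p\in\Theta$. Hence
\[
\nG{D}=|1-s|\,\nG{K_0+s\Delta}\ge c_G|1-s|.
\]
Since $1-s=1-p^\ast t=-p^\ast(t-t^\ast)$, this is exactly~\eqref{eq:lowerbound}. Squaring gives the bound on $d(p)$. Then $d(p)=0$ forces $t=t^\ast$, and conversely $d(p^\ast)=0$. So $p\mapsto d(p)$ is a contrast.

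For (iii), $p\mapsto d(p)$ is continuous on the compact $\Theta$: by~\eqref{eq:poly} it is a polynomial in $t$ with bounded coefficients. The minimum $d_{\min}$ is therefore attained. Thus $d_{\min}>0$ if and only if $d(p)>0$ for every $p$, i.e.\ $D(\cdot\,;L,p)\neq0$ on a set of positive $G$-measure for each $p$. This is the stated non-intersection of $\mathcal F_1$ and $\mathcal F_2$.

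For the sufficient condition I use~\eqref{eq:structure}. If $d(p)=0$ for some $p$, then $(1-s)K_0+s(1-s)\Delta+sK=0$ $G$-a.e., with $s=p^\ast/p>0$. Dividing by $s$ shows $K\in\operatorname{span}\{K_0,\Delta\}$ in $L^2(G)$, a contradiction. It remains to show that (II) implies $K\notin\operatorname{span}\{K_0,\Delta\}$. I write $K=F-F_1\otimes F_2$ and expand $K_0$ and $\Delta$ on the five functions of (II), as in part~(i). In any relation $K=aK_0+b\Delta$, the coefficient of $F$ is $1$ on the left and $0$ on the right. So a nontrivial vanishing combination of the family in (II) would follow, contradicting its linear independence.

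The main care needed is bookkeeping of the expansions on the families in (I) and (II), checking that a fixed coordinate of the coefficient vector is bounded away from zero uniformly in $s$. This uniformity is what makes (i) hold for every compact $\Theta$.
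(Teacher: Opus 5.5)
Your proof is correct and follows the paper's argument essentially step for step. It uses the same expansion of $K_0+s\Delta$ with coefficient $1$ on $F_0$, the factorisation $D=(1-s)(K_0+s\Delta)$ under $H_0$, continuity of $d$ on the compact $\Theta$, and the dependence form \eqref{eq:structure} for the span criterion. There are two minor variations. In (i) you invoke the Gram bound \eqref{eq:gram} to get the constant $\sqrt{\mu_G}$ uniformly in $s\in\R$, where the paper uses continuity and positivity on the compact $S$. In (iii) you obtain condition~(II) as a special case of the span criterion, where the paper reads off the nonzero coefficient $p\,p^\ast$ of $F$ in \eqref{eq:expansion}.
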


We retain in particular that, if $p^c\in\argmin_{p\in\Theta}d(p)$,
\begin{equation}\label{eq:pc}
  p^c=p^\ast\ \text{under $H_0$},\qquad d(p^c)=d_{\min}>0\ \text{under $H_1$ and (II)}.
\end{equation}

\begin{remark}
Part~(iii) shows what is really tested. In terms of the observable $L$, the
null hypothesis is ``the inverted families $\mathcal F_1$ and $\mathcal F_2$
intersect over $\Theta$''. The test is consistent against every alternative
with $d_{\min}>0$. When $G$ has full support and the cdfs are continuous,
$L^2(G)$-statements are equivalent to pointwise statements. The
alternatives that are \emph{absorbed} by a wrong weight, i.e.\ with
$d_{\min}=0$, are characterised exactly in Appendix~\ref{app:example}: they
are those for which $F=\lambda H+(1-\lambda)F_0$ for a function $H$ of
product form and some $\lambda\in\{p/p^\ast:\ p\in\Theta\}$. When $\lambda<1$,
the unknown component secretly contains a fraction $1-\lambda$ of the
known source, and $L$ admits a second representation with weight
$\lambda p^\ast$ and an independent component $H$. Their dependence functions
satisfy $K=(1-\lambda)K_0+\lambda^{-1}(1-\lambda)\Delta$, a one-parameter curve
in $\operatorname{span}\{K_0,\Delta\}$, and no procedure based
on~\eqref{eq:model} alone can detect them.
\end{remark}

\subsection{Regularity of the contrast}

The next lemma collects the analytic properties of $d$ used in
Section~\ref{sec:asympt}. Dots denote derivatives with respect to $p$.

\begin{lemma}\label{lem:contrast}
Let $\Theta=[\theta_l,\theta_u]\subset(0,1]$ with $\theta_l>0$.
\begin{enumerate}[label=(\roman*),leftmargin=*]
\item Under $H_0$ or $H_1$, $p\mapsto D(\bu;L,p)$ is infinitely differentiable
on $(0,1]$, uniformly in $\bu$, and differentiating the observable
form~\eqref{eq:poly} gives
\[
  \dot D(\cdot\,;L,p)=-\frac{B^\ast}{p^2}+\frac{2C^\ast}{p^3},\qquad
  \ddot D(\cdot\,;L,p)=\frac{2B^\ast}{p^3}-\frac{6C^\ast}{p^4},
\]
and $d$ is Lipschitz on $\Theta$:
$|d(p)-d(p')|\le 50\,\theta_l^{-5}\,|p-p'|$ for $p,p'\in\Theta$.
\item Under $H_0$ and~\ref{A:theta}--\ref{A:ident}, $d(p^\ast)=\dot d(p^\ast)=0$ and
the minimum at $p^\ast$ is non-degenerate:
\begin{equation}\label{eq:ddot}
  \ddot d(p^\ast)=2\int_{\R^2}\dot D^2(\bu;L,p^\ast)\,dG(\bu)
  =\frac{2}{p^{\ast2}}\,\nG{K_0+\Delta}^2\;\ge\;\frac{2c_G^2}{p^{\ast2}}>0 .
\end{equation}
Moreover, the derivatives of $D$ at the true weight in the two
parametrisations are linked by
\begin{equation}\label{eq:Ddot-link}
  K_0+\Delta\;=\;-\frac{\Dd}{p^\ast}\;=\;p^\ast\,\dot D(\cdot\,;L,p^\ast).
\end{equation}
Without~\ref{A:ident} the minimum can be degenerate: if $K_0=-\Delta$, then $\ddot d(p^\ast)=0$.
\item Under $H_1$, let $p^c$ be an interior minimiser of $d$ on $\Theta$. Then
$\dot d(p^c)=0$ and
\begin{equation}\label{eq:ddotc}
  \ddot d(p^c)=2\nG{\dot D(\cdot\,;L,p^c)}^2+2\ip{D(\cdot\,;L,p^c)}{\ddot D(\cdot\,;L,p^c)}\ \ge\ 0 .
\end{equation}
Unlike in~(ii), $D(\cdot\,;L,p^c)\neq0$ when $d_{\min}>0$, so the second term
does not vanish and has no definite sign. (When $d_{\min}=0$, the alternative is
absorbed in the sense of Appendix~\ref{app:example}, and~(ii) applies to the
absorbing representation.) Strict positivity holds under either of the
following conditions:
\begin{enumerate}[label=(\alph*),leftmargin=*]
\item $\sqrt{d_{\min}}\;\nG{\ddot D(\cdot\,;L,p^c)}<\nG{\dot D(\cdot\,;L,p^c)}^2$,
which holds in particular for alternatives close to the null hypothesis, where
$d_{\min}$ is small;
\item $t\mapsto d(1/t)$ is not of the form $d_{\min}+\nG{C^\ast}^2(t-1/p^c)^4$ on $T$.
\end{enumerate}
Condition~(b) is also necessary: $\ddot d(p^c)=0$ if and only if
$d(1/t)=d_{\min}+\nG{C^\ast}^2(t-1/p^c)^4$, a non-generic situation.
\end{enumerate}
\end{lemma}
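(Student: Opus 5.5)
Everything will rest on the observable form~\eqref{eq:poly} of Lemma~\ref{lem:algebra}: $D(\cdot\,;L,p)=K_0+B^\ast/p-C^\ast/p^2$, where $K_0,B^\ast,C^\ast$ are fixed bounded functions that do not depend on $p$. Since $|F|,|F_0|,|F_j|\le1$, we have $\nI{K_0}\le1$, $\nI{\Delta}\le1$ and $\nI{K}\le1$, so~\eqref{eq:BC} gives $\nI{B^\ast}\le3$ and $\nI{C^\ast}\le1$.

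\emph{Part (i).} The map $p\mapsto D(\bu;L,p)$ is a polynomial in $1/p$ whose coefficients are bounded uniformly in $\bu$. It is therefore $C^\infty$ on $(0,1]$, uniformly in $\bu$, and the stated formulas for $\dot D$ and $\ddot D$ follow by termwise differentiation. For the Lipschitz bound, write $d(p)-d(p')=\ip{D_p-D_{p'}}{D_p+D_{p'}}$ and use the sup bounds on $\Theta$. First, $\nI{D_p}\le 1+3\theta_l^{-1}+\theta_l^{-2}\le5\theta_l^{-2}$. Second, by the mean value theorem, $\nI{D_p-D_{p'}}\le\sup\nI{\dot D}\,|p-p'|\le(3\theta_l^{-2}+2\theta_l^{-3})|p-p'|\le5\theta_l^{-3}|p-p'|$. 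Multiplying gives a constant of order $50\theta_l^{-5}$; the bookkeeping only has to be crude enough to fit under $50$.

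\emph{Part (ii).} Under $H_0$, apply~\eqref{eq:structureH0}: $D=(t-t^\ast)\Dd-p^{\ast2}(t-t^\ast)^2\Delta$, so $D(\cdot;L,p^\ast)=0$ and $d(p^\ast)=0$. Differentiating $d=\nG{D}^2$ twice gives $\dot d=2\ip{D}{\dot D}$ and $\ddot d=2\nG{\dot D}^2+2\ip D{\ddot D}$. At $p^\ast$ the terms containing $D$ vanish, which leaves $\dot d(p^\ast)=0$ and $\ddot d(p^\ast)=2\nG{\dot D(\cdot;L,p^\ast)}^2$. The chain rule~\eqref{eq:chain}, $\partial_t D=-p^2\dot D$, evaluated at $t^\ast$, yields $\Dd=-p^{\ast2}\dot D(\cdot;L,p^\ast)$. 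Combined with $\Dd=-p^\ast(K_0+\Delta)$ this gives~\eqref{eq:Ddot-link}, and hence $\ddot d(p^\ast)=2p^{\ast-2}\nG{K_0+\Delta}^2$. Since $p^\ast\in\Theta$, we have $1\in S$, so Assumption~\ref{A:ident} gives $\nG{K_0+\Delta}\ge c_G$. The degenerate example is immediate: if $K_0=-\Delta$, then $\dot D(\cdot;L,p^\ast)=0$.

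\emph{Part (iii).} At an interior minimiser, $\dot d(p^c)=0$ and $\ddot d(p^c)\ge0$ by the standard first- and second-order conditions, and the formula~\eqref{eq:ddotc} is the expression for $\ddot d$ computed above. For (a), Cauchy--Schwarz gives $|\ip D{\ddot D}|\le\sqrt{d_{\min}}\,\nG{\ddot D}$, which yields strict positivity under (a). The step I expect to be the main obstacle is the equivalence in (b), and I plan to handle it in the $t$ variable. There $h(t):=d(1/t)=\nG{K_0+tB^\ast-t^2C^\ast}^2$ is a quartic with leading coefficient $\nG{C^\ast}^2$. Because $p\mapsto1/p$ is a diffeomorphism with $h'(t^c)=0$, where $t^c=1/p^c$, the chain rule gives $\ddot d(p^c)=t^{c4}h''(t^c)$. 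So $\ddot d(p^c)=0$ exactly when $h''(t^c)=0$. Since $h'(t^c)=0$ as well, this means $h$ has a zero of order at least $3$ in $h-d_{\min}$ at $t^c$. Then $h(t)-d_{\min}=(t-t^c)^3(\alpha t+\beta)$, and $\alpha=\nG{C^\ast}^2$. Because $t^c$ is an interior minimum, $h-d_{\min}\ge0$ near $t^c$, so the odd-order factor $(t-t^c)^3$ forces $\alpha t^c+\beta=0$. This leaves $h(t)=d_{\min}+\nG{C^\ast}^2(t-t^c)^4$. The converse is a direct computation.
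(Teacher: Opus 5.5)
Your proposal is correct and follows the paper's proof essentially step for step: bounds on $\nI{D}$ and $\nI{\dot D}$ from the observable form for (i), the vanishing of $D(\cdot\,;L,p^\ast)$ together with $\dot D(\cdot\,;L,p^\ast)=(K_0+\Delta)/p^\ast$ for (ii), and the reduction $\ddot d(p^c)=t^{c4}\delta''(t^c)$ followed by the quartic argument at a local minimum for (iii). The differences are cosmetic. You bound $|d(p)-d(p')|$ through $\ip{D_p-D_{p'}}{D_p+D_{p'}}$ rather than by bounding $\dot d$, and this gives the same constant $50\theta_l^{-5}$. You also obtain $\dot D(\cdot\,;L,p^\ast)$ from the chain rule instead of substituting \eqref{eq:BC}, a route the paper itself records as a consistency check.
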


\section{Estimation, test statistic and asymptotic theory}\label{sec:asympt}

\subsection{The estimator and the test}

Let $\bm X_1,\dots,\bm X_n$ be i.i.d.\ with cdf $L$, and let
\[
  \Ln(\bu)=\frac1n\sum_{i=1}^n\1\{\bm X_i\le\bu\},\qquad
  \widehat L_{n,j}(u_j)=\frac1n\sum_{i=1}^n\1\{X_{i,j}\le u_j\},
\]
the latter being the marginals of $\Ln$. Replacing $L$ by $\Ln$ in
\eqref{eq:family1}--\eqref{eq:family2} gives the empirical inverted families
\begin{align*}
  \widehat{\mathcal F}_{1}&=\Bigl\{\widehat F_n(\cdot\,;p)=\frac{\Ln-(1-p)F_0}{p},\ p\in\Theta\Bigr\},\\
  \widehat{\mathcal F}_{2}&=\Bigl\{\widehat F_{n,1}(\cdot\,;p)\otimes\widehat F_{n,2}(\cdot\,;p),\
  \widehat F_{n,j}(\cdot\,;p)=\frac{\widehat L_{n,j}-(1-p)F_{0,j}}{p},\ j=1,2,\ p\in\Theta\Bigr\},
\end{align*}
and the empirical discrepancy
$D(\cdot\,;\Ln,p)=\widehat F_n(\cdot\,;p)-\widehat F_{n,1}(\cdot\,;p)\otimes\widehat F_{n,2}(\cdot\,;p)$.
The IBM estimator of the weight is the value of $p$ that brings the two
empirical families closest together, and the test statistic is the
corresponding minimal distance:
\begin{equation}\label{eq:estimator}
  \ph\in\argmin_{p\in\Theta}d_n(p),\qquad
  d_n(p)=\int_{\R^2}D^2(\bu;\Ln,p)\,dG(\bu),\qquad
  \mathsf T_n=n\,d_n(\ph).
\end{equation}

\paragraph{Computation.} By Lemma~\ref{lem:algebra}(iii) applied to $\Ln$,
the empirical discrepancy is a quadratic polynomial in $t=1/p$,
\begin{equation}\label{eq:Dn}
\begin{gathered}
  D_n(\cdot\,;t):=D(\cdot\,;\Ln,1/t)=K_0+t\,B_n-t^2C_n,\\
  B_n=A_n-A_{n,1}\otimes F_{0,2}-F_{0,1}\otimes A_{n,2},\qquad C_n=A_{n,1}\otimes A_{n,2},
\end{gathered}
\end{equation}
with $A_n=\Ln-F_0$ and $A_{n,j}=\widehat L_{n,j}-F_{0,j}$. Consequently
\begin{equation}\label{eq:dn}
  \delta_n(t):=d_n(1/t)=\nG{D_n(\cdot\,;t)}^2=\sum_{k=0}^4 a_{n,k}\,t^k
\end{equation}
is a quartic polynomial. Its coefficients are identified by expanding the
squared norm by bilinearity and grouping the nine inner products according to
the power of $t$:
\[
\begin{aligned}
  \delta_n(t)&=\bigl\langle K_0+tB_n-t^2C_n,\;K_0+tB_n-t^2C_n\bigr\rangle_G\\
  &=\underbrace{\nG{K_0}^2}_{a_{n,0}}
   +t\,\underbrace{2\ip{K_0}{B_n}}_{a_{n,1}}
   +t^2\,\underbrace{\bigl(\nG{B_n}^2-2\ip{K_0}{C_n}\bigr)}_{a_{n,2}}
   +t^3\,\underbrace{\bigl(-2\ip{B_n}{C_n}\bigr)}_{a_{n,3}}
   +t^4\,\underbrace{\nG{C_n}^2}_{a_{n,4}} .
\end{aligned}
\]
Equivalently $a_{n,k}=\delta_n^{(k)}(0)/k!$. These five numbers are computed once.
For the empirical choice $G=\Ln$, each inner product is a sample average, e.g.\
$\ip{B_n}{C_n}=n^{-1}\sum_{m=1}^nB_n(\bm X_m)C_n(\bm X_m)$. Writing $\that=1/\ph$, the
minimisation in~\eqref{eq:estimator} is therefore exact: $\that$ is either an
endpoint of $T$ or one of the at most three real roots in $T$ of the cubic
$\delta_n'$. These roots are given by Cardano's formula, or by any polynomial
root solver, and $\that$ is the candidate with the smallest value of $\delta_n$.
The estimator therefore has no single closed-form expression, but it is
obtained exactly, without iterative optimisation. To avoid any ambiguity, $d$ and $d_n$ are always evaluated at a
weight $p$, while their expressions in $t=1/p$ are denoted
\[
  \delta_n(t)=d_n(1/t),\qquad \delta(t)=d(1/t),\qquad t\in T,
\]
so that, for instance, $\delta_n(\that)=d_n(\ph)$ and $\argmin_T\delta=\{1/p:\ p\in\argmin_\Theta d\}$. The test rejects $H_0$ for large values
of $\mathsf T_n$.

\begin{remark}[Which candidate is the global minimiser]\label{rem:roots}
The cubic $\delta_n'$ may have up to three real roots, but
identifiability (Assumption~\ref{A:ident}) only guarantees that $d$ has a
single \emph{zero} on $\Theta$, not a single critical point. Under $H_0$,
\eqref{eq:structureH0} gives $d\propto(1-s)^2\nG{K_0+s\Delta}^2$ with
$s=p^\ast/p$. Besides the double zero at $s=1$, this quartic typically has a
local maximum and a second local minimum. In the designs of
Section~\ref{sec:simu}, the second local minimum lies at $s\le0$, i.e.\
outside $T$, and the local maximum lies inside $T$ only when $p^\ast$ is
small. The exact minimisation nevertheless returns the consistent root. Under
$H_0$ and \ref{A:theta}--\ref{A:ident}, the empirical contrast at the root
near $t^\ast$ is $O_P(n^{-1})$ (Corollary~\ref{thm:null}). By
\eqref{eq:lowerbound}, every competitor at a fixed distance from $t^\ast$
has a value bounded away from zero: in particular
$\delta_n(t_l)\to\delta(t_l)=d(\theta_u)\ge c_G^2p^{\ast2}(t^\ast-t_l)^2>0$ at the endpoint
$p=\theta_u$, while $\delta_n(t_u)$ is even larger since the quartic grows like
$\nG{C_n}^2t^4$. Moreover, $\ddot d(p^\ast)>0$ (Lemma~\ref{lem:contrast})
and the coefficients of the cubic converge almost surely
(Theorem~\ref{thm:consistency}). Hence, with probability tending to one, the
cubic has exactly one root in a fixed neighbourhood of $t^\ast$, and
$\that=1/\ph$ is that root. In finite samples the only serious competitor is
the endpoint $p=\theta_u$, beyond the local maximum. It wins when
$n\,d(\theta_u)$ is not large relative to the sampling noise, which explains
the boundary estimates and the size distortion observed in Scenario~B with
$p^\ast=0.3$ and $n=250$ (Section~\ref{sec:simu}). A boundary value
$\ph=\theta_u$ should therefore be reported as a warning sign of weak
identification.
\end{remark}

\subsection{Almost sure behaviour}

\begin{theorem}\label{thm:consistency}
Assume $\theta_l>0$. Then, under both $H_0$ and $H_1$,
\begin{equation}\label{eq:unifrate}
\begin{gathered}
  \sup_{p\in\Theta}\nI{D(\cdot\,;\Ln,p)-D(\cdot\,;L,p)}
  =O_{a.s.}\Bigl(\sqrt{\log\log n/n}\Bigr),\\
  \sup_{p\in\Theta}|d_n(p)-d(p)|=O_{a.s.}\Bigl(\sqrt{\log\log n/n}\Bigr).
\end{gathered}
\end{equation}
Consequently:
\begin{enumerate}[label=(\roman*),leftmargin=*]
\item under $H_0$ and \ref{A:ident}, $|\ph-p^\ast|=O_{a.s.}(\sqrt{\log\log n/n})$;
\item under $H_1$, $d_n(\ph)\to d_{\min}$ a.s.; if $d_{\min}>0$ then
  $\mathsf T_n\to\infty$ a.s.; if moreover $d$ has a unique minimiser $p^c$ on
  $\Theta$, then $\ph\to p^c$ a.s.
\end{enumerate}
\end{theorem}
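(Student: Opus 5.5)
The plan rests on the observable form~\eqref{eq:poly}. Applied to both $\Ln$ and $L$, it gives
\[
D(\cdot\,;\Ln,p)-D(\cdot\,;L,p)=t\,(B_n-B^\ast)-t^2\,(C_n-C^\ast),\qquad t=1/p\in T .
\]
Hence $\sup_{p}\nI{D(\cdot\,;\Ln,p)-D(\cdot\,;L,p)}\le t_u\nI{B_n-B^\ast}+t_u^2\nI{C_n-C^\ast}$, and the whole problem reduces to the sup-norm deviations of $B_n$ and $C_n$. Since $F_0$ is known, $A_n-A=\Ln-L$ and $A_{n,j}-A_j=\widehat L_{n,j}-L_j$, which gives $\nI{B_n-B^\ast}\le 3\nI{\Ln-L}$. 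For $C_n$ we write
\[
C_n-C^\ast=(A_{n,1}-A_1)\otimes A_{n,2}+A_1\otimes(A_{n,2}-A_2),
\]
and use that all $A$'s are bounded by $1$, which gives $\nI{C_n-C^\ast}\le 2\nI{\Ln-L}$.

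The key probabilistic input is the law of the iterated logarithm for the bivariate Kolmogorov--Smirnov statistic: $\nI{\Ln-L}=O_{a.s.}(\sqrt{\log\log n/n})$. This follows from the Chung--Smirnov LIL, or its multivariate extension (Kiefer, or the LIL for VC/Donsker classes of lower-left quadrants in \citet{vdVW1996}). It holds whatever $L$ is, so it covers both hypotheses, and it gives the first line of~\eqref{eq:unifrate}. For the second line we use $|d_n(p)-d(p)|=|\nG{D_n}^2-\nG{D}^2|\le\nG{D_n-D}\,(\nG{D_n}+\nG{D})$. Both $D_n$ and $D$ are uniformly bounded on $\Theta$ by a constant depending only on $t_u$, since $K_0,B,C$ are bounded. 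Combined with $\nG{\cdot}\le\nI{\cdot}$, this yields the rate for $\sup_p|d_n-d|$.

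For~(i), we use that $\ph$ minimises $d_n$ and that $d(p^\ast)=0$. These give
\[
d(\ph)\le d_n(\ph)+\varepsilon_n\le d_n(p^\ast)+\varepsilon_n\le 2\varepsilon_n,\qquad \varepsilon_n=\sup_p|d_n-d| .
\]
Proposition~\ref{prop:ident}(ii) then gives $c_G^2p^{\ast2}|\that-t^\ast|^2\le 2\varepsilon_n$. This yields only the rate $(\log\log n/n)^{1/4}$, which is the main obstacle. To reach the full rate, we work directly with $D$ rather than with $d$. By~\eqref{eq:lowerbound}, $\nG{D(\cdot;L,\ph)}\ge c_Gp^\ast|\that-t^\ast|$. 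The triangle inequality gives
\[
\nG{D(\cdot;L,\ph)}\le \nG{D(\cdot;\Ln,\ph)}+\eta_n,\qquad \eta_n=\sup_p\nI{D(\cdot;\Ln,p)-D(\cdot;L,p)} .
\]
Minimality then gives $\nG{D(\cdot;\Ln,\ph)}=\sqrt{d_n(\ph)}\le\sqrt{d_n(p^\ast)}=\nG{D(\cdot;\Ln,p^\ast)}\le\eta_n$, because $D(\cdot;L,p^\ast)=0$. Hence $c_Gp^\ast|\that-t^\ast|\le 2\eta_n$. Finally, $|\ph-p^\ast|\le\theta_u^2|\that-t^\ast|$ converts this into the stated rate for $\ph$.

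For~(ii), the uniform convergence immediately gives $|\min_\Theta d_n-\min_\Theta d|\le\varepsilon_n\to0$, so $d_n(\ph)\to d_{\min}$ a.s. If $d_{\min}>0$, then $\mathsf T_n=n\,d_n(\ph)\ge n(d_{\min}-\varepsilon_n)\to\infty$ a.s. If $p^c$ is the unique minimiser, we argue by the standard argmin argument. Continuity of $d$ (Lemma~\ref{lem:contrast}(i)) and compactness of $\Theta$ give, for each $\epsilon>0$,
\[
\inf_{|p-p^c|\ge\epsilon}d(p)>d_{\min}.
\]
Since $d(\ph)\le d_{\min}+2\varepsilon_n\to d_{\min}$, we eventually have $|\ph-p^c|<\epsilon$ almost surely.
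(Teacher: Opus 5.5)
Your proposal is correct and follows the paper's proof essentially step by step. It uses the same reduction to the coefficient errors $B_n-B^\ast$ and $C_n-C^\ast$; your asymmetric splitting of $C_n-C^\ast$ merely avoids the quadratic remainder $E_n^{(1)}\otimes E_n^{(2)}$ that the paper carries. It uses the same comparison of norms $\nG{D(\cdot\,;\Ln,\ph)}\le\nG{D(\cdot\,;\Ln,p^\ast)}$ combined with \eqref{eq:lowerbound} to get the full rate in~(i) rather than the fourth-root rate, and the same uniform-convergence and separation argument for~(ii). As in the paper, the comparison with $p^\ast$ tacitly requires $p^\ast\in\Theta$, which the statement leaves implicit.
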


The rate in~\eqref{eq:unifrate} comes from the law of the iterated logarithm
for the bivariate empirical cdf,
$\nI{\Ln-L}=O_{a.s.}(\sqrt{\log\log n/n})$ \citep{Kiefer1961,Alexander1984}.
It holds for every $L$, continuous or not, because lower-left quadrants form a
Vapnik--\v{C}ervonenkis class. Since the marginals of $\Ln$ are the
empirical marginal cdfs, the same bound holds for
$\max_j\lVert\widehat L_{n,j}-L_j\rVert_\infty$, exactly as in the univariate
case \citep{ShorackWellner1986}.

\subsection{Asymptotic theory under both hypotheses}\label{sec:unified}

The behaviour of $\mathsf T_n$ under $H_0$ and under $H_1$ follows from a
single functional central limit theorem for the pair formed by the estimated
weight and the estimated discrepancy, centred at the \emph{pseudo-true} weight
\[
  p^c\in\argmin_{p\in\Theta}d(p),\qquad t^c=1/p^c,\qquad d_{\min}=d(p^c)=\delta(t^c),
\]
in the spirit of \citet{MPSV2024}. Under $H_0$ and~\ref{A:ident}, $d$ vanishes
only at $p^\ast$, so $p^c=p^\ast$, $t^c=t^\ast$ and $d_{\min}=0$.

\paragraph{Population quantities.} Write $D(t)=D(\cdot\,;L,1/t)=K_0+tB^\ast-t^2C^\ast$
for the population discrepancy as a function of $t$, and set
\[
  D^c:=D(t^c),\qquad D'_c:=D'(t^c)=B^\ast-2t^cC^\ast,\qquad
  J^c:=\delta''(t^c)=2\nG{D'_c}^2+2\ip{D^c}{D''},
\]
with $D''=-2C^\ast$. The first-order condition
\begin{equation}\label{eq:orth}
  \delta'(t^c)=2\ip{D^c}{D'_c}=0
\end{equation}
says that the residual $D^c$ is orthogonal in $L^2(G)$ to the tangent $D'_c$ of the
curve $t\mapsto D(t)$. By the chain rule~\eqref{eq:chain} and $\dot d(p^c)=0$,
the curvatures in the two parametrisations are linked by
$J^c=p^{c4}\,\ddot d(p^c)$.

In terms of the sources, let $s_c=p^\ast t^c=p^\ast/p^c$. By
Lemma~\ref{lem:algebra}(iii), $B^\ast=p^\ast(K+\Delta-K_0)$ and
$C^\ast=p^{\ast2}\Delta$, hence
\begin{equation}\label{eq:Dc-sources}
  D^c=(1-s_c)K_0+s_c(1-s_c)\Delta+s_cK,\qquad
  D'_c=B^\ast-2t^cC^\ast=p^\ast\bigl[K-K_0+(1-2s_c)\Delta\bigr].
\end{equation}
The first identity is~\eqref{eq:structure} at $p=p^c$. The second is the
derivative of~\eqref{eq:structure} with respect to $s$ multiplied by
$ds/dt=p^\ast$. Under $H_0$, $K=0$ and $p^c=p^\ast$, so $s_c=1$. Then
$D^c=0$ and~\eqref{eq:Dc-sources} reduces to
\[
  D'_c=\Dd=-p^\ast(K_0+\Delta)=-p^\ast\Psi,\qquad
  \Psi:=K_0+\Delta=p^\ast\dot D(\cdot\,;L,p^\ast)
\]
(see~\eqref{eq:Ddot-link} in Lemma~\ref{lem:contrast}(ii)). The direction
$\Psi$ is therefore a null object. Under $H_1$, the tangent $D'_c$ involves $K$
and is evaluated at $s_c\neq1$ in general. Even at the true weight, $s=1$, it
equals $-p^\ast(K_0+\Delta-K)$ rather than $-p^\ast\Psi$. Under $H_0$,
\eqref{eq:ddot} gives
$J^c=2p^{\ast2}\nG\Psi^2\ge2p^{\ast2}c_G^2>0$ under~\ref{A:ident}.

\paragraph{Function spaces and the projection.} Let $L^2(G)$ denote the
Hilbert space of (equivalence classes of) Borel functions $f:\R^2\to\R$ with
$\nG f^2=\int f^2\,dG<\infty$, equipped with the inner product $\ip\cdot\cdot$.
Bounded Borel functions belong to $L^2(G)$, since $G$ is a probability
measure. Let $\Pi:L^2(G)\to L^2(G)$ be the orthogonal projection onto
$\operatorname{span}\{\Psi\}=\operatorname{span}\{\Dd\}$,
\[
  \Pi f=\frac{\ip f\Psi}{\nG\Psi^2}\,\Psi,\qquad f\in L^2(G).
\]
It is linear, self-adjoint and idempotent, $\lVert\Pi\rVert_{\mathrm{op}}=\lVert I-\Pi\rVert_{\mathrm{op}}=1$, and
\begin{equation}\label{eq:L2-linf}
  \nG{(I-\Pi)z}\le\nG z\le\nI z
\end{equation}
for every $z$ in the closed subspace $\mathcal B(\R^2)\subset\ell^\infty(\Rb^2)$ of
bounded Borel functions, on which $\Pi$ acts through $G$-equivalence classes.

\paragraph{Empirical process.} For $\bx\in\R^2$, let
\[
  \mathsf e_\bx(\bv)=\1\{\bx\le\bv\},\qquad \bv\in\Rb^2,
\]
be the cdf of the point mass at $\bx$, a function of $\bv$ with $\bx$ fixed.
Then, for every $\bv\in\Rb^2$,
\[
  \Ln(\bv)=\frac1n\sum_{i=1}^n\mathsf e_{\bm X_i}(\bv),\qquad
  L(\bv)=\EE\,\mathsf e_{\bm X}(\bv).
\]
When a linear map is applied to $\mathsf e_\bx$, it acts on the function
$\bv\mapsto\1\{\bx\le\bv\}$, and the result is then evaluated at a point
$\bu$. Let $\G_n=\sqrt n(\Ln-L)$ and let $\G$ be an
$L$-Brownian bridge on $\Rb^2$. It is the centred Gaussian process indexed
by $\bu=(u_1,u_2)\in\Rb^2$ whose covariance function is that of the indicator
$\1\{\bm X\le\bu\}$:
\begin{equation}\label{eq:bridge-cov}
  \EE\bigl[\G(\bu)\G(\bv)\bigr]
  =\PP(\bm X\le\bu,\ \bm X\le\bv)-L(\bu)L(\bv)
  =L(\bu\wedge\bv)-L(\bu)L(\bv),
\end{equation}
where $\bu\wedge\bv=(u_1\wedge v_1,\,u_2\wedge v_2)$ is the componentwise
minimum and, in the model, $L(\bu\wedge\bv)=(1-p^\ast)F_0(\bu\wedge\bv)+p^\ast F(\bu\wedge\bv)$.
Equivalently, for any points $\bu_1,\dots,\bu_k\in\Rb^2$, the vector
$(\G(\bu_1),\dots,\G(\bu_k))$ is $\mathcal N_k(0,\Sigma)$ with
\[
  \Sigma_{ij}=L\bigl(u_{i1}\wedge u_{j1},\,u_{i2}\wedge u_{j2}\bigr)-L(\bu_i)L(\bu_j),
  \qquad 1\le i,j\le k ,
\]
and in particular $\operatorname{Var}\G(\bu)=L(\bu)(1-L(\bu))$. The marginal
bridges $\G^{(1)}(x)=\G(x,+\infty)$ and $\G^{(2)}(y)=\G(+\infty,y)$ are
$L_1$- and $L_2$-Brownian bridges. Their cross-covariances with $\G$ follow
from~\eqref{eq:bridge-cov}:
\[
  \EE\bigl[\G(\bu)\G^{(1)}(x)\bigr]=L(u_1\wedge x,u_2)-L(\bu)L_1(x),\qquad
  \EE\bigl[\G^{(1)}(x)\G^{(2)}(y)\bigr]=L(x,y)-L_1(x)L_2(y).
\]
The process $\G_n$ has the same covariance~\eqref{eq:bridge-cov} for every
$n$, and $\G$ is its weak limit in $\ell^\infty(\Rb^2)$ \citep[Ch.~2.5]{vdVW1996}. Every process used below is
obtained from $\G_n$ by integrating it against a family of finite signed
measures, and its asymptotic behaviour follows from a single general fact.

\begin{lemma}[Linear images of the empirical process]\label{lem:linear}
Let $\mathcal T$ be an index set and, for each $\tau\in\mathcal T$, let
$\mu_\tau$ be a finite signed Borel measure on $\Rb^2$ with
$m_\mu:=\sup_\tau|\mu_\tau|(\Rb^2)<\infty$. For bounded Borel $w$ on $\Rb^2$, set
\[
  \mathcal M(w)(\tau)=\int w\,d\mu_\tau,\qquad
  f_\tau(\bx)=\mathcal M(\mathsf e_\bx)(\tau)=\int\mathsf e_\bx(\bv)\,d\mu_\tau(\bv)
  =\int\1\{\bx\le\bv\}\,d\mu_\tau(\bv).
\]
\begin{enumerate}[label=(\roman*),leftmargin=*]
\item Each $f_\tau$ is Borel with $|f_\tau|\le m_\mu$, and, exactly for every $n$,
\[
  \mathcal M(\G_n)(\tau)=\frac1{\sqrt n}\sum_{i=1}^n\bigl(f_\tau(\bm X_i)-\EE f_\tau(\bm X)\bigr).
\]
\item $\mathcal M(\G_n)\dto\mathcal M(\G)$ in $\ell^\infty(\mathcal T)$.
\item $\mathcal M(\G)$ is a centred Gaussian process with covariance
\begin{equation}\label{eq:lin-cov}
\begin{aligned}
  \EE\bigl[\mathcal M(\G)(\tau)\,\mathcal M(\G)(\tau')\bigr]
  &=\operatorname{Cov}\bigl(f_\tau(\bm X),f_{\tau'}(\bm X)\bigr)\\
  &=\iint\bigl[L(\bm a\wedge\bm b)-L(\bm a)L(\bm b)\bigr]\,d\mu_\tau(\bm a)\,d\mu_{\tau'}(\bm b).
\end{aligned}
\end{equation}
\end{enumerate}
\end{lemma}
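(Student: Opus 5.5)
The plan is to reduce everything to Fubini and to the Donsker property of lower-left quadrants.

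\emph{Part (i).} Measurability of $f_\tau(\bx)=\mu_\tau(\{\bv:\bv\ge\bx\})$ follows from Fubini applied to the jointly Borel indicator $(\bx,\bv)\mapsto\1\{\bx\le\bv\}$. The bound $|f_\tau|\le|\mu_\tau|(\Rb^2)\le m_\mu$ is immediate. The exact identity uses only linearity of the integral: since $\G_n=n^{-1/2}\sum_i(\mathsf e_{\bm X_i}-L)$ and $L(\bv)=\EE\,\mathsf e_{\bm X}(\bv)$, Fubini (with bounded integrand and finite $|\mu_\tau|$) gives
\[
\int L\,d\mu_\tau=\EE\int\mathsf e_{\bm X}\,d\mu_\tau=\EE f_\tau(\bm X).
\]

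\emph{Part (ii).} I would view $\mathcal M$ as a map $\mathcal B(\Rb^2)\to\ell^\infty(\mathcal T)$. It is linear, and it is bounded because $\sup_\tau|\int w\,d\mu_\tau|\le m_\mu\nI w$, so it is Lipschitz. Since $\G_n\dto\G$ in $\ell^\infty(\Rb^2)$ (the quadrants form a VC class), the continuous mapping theorem yields $\mathcal M(\G_n)\dto\mathcal M(\G)$.

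The one delicate point, which I expect to be the main obstacle, is that $\mathcal M$ is defined only on Borel-measurable bounded functions, whereas $\ell^\infty(\Rb^2)$ is larger. I would handle this in two steps.
\begin{itemize}
\item The paths of $\G_n$ are Borel, being finite combinations of indicators and $L$.
\item The limit $\G$ can be taken with paths in the separable closed subspace of functions that are uniformly continuous with respect to the intrinsic $L^2(L)$ semimetric; these paths are Borel, being limits of simple functions. Alternatively, one can use the version in which $\G$ is concentrated on a separable subset of $\mathcal B(\Rb^2)$.
\end{itemize}
Extending $\mathcal M$ to all of $\ell^\infty(\Rb^2)$ is not possible in general, so instead I would use the extended continuous mapping theorem \citep[Thm.~1.11.1]{vdVW1996}, or simply the version of the continuous mapping theorem for maps that are continuous on a set containing the support of the limit. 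This works because $\mathcal B(\Rb^2)$ is a closed subspace containing all $\G_n$ and the support of $\G$.

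As a cross-check on (ii), part (i) exhibits $\mathcal M(\G_n)$ as the empirical process indexed by $\{f_\tau\}$. That class lies in the closed symmetric convex hull, scaled by $m_\mu$, of the Donsker class of quadrant indicators, so it is itself Donsker. This gives (ii) directly, with the limit described as a Gaussian process.

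\emph{Part (iii).} The limit is centred Gaussian because it is a continuous linear image of a centred Gaussian process: its finite-dimensional marginals are $L^2$-limits of Gaussian linear combinations. Alternatively, this follows from the multivariate CLT for $(f_{\tau_1}(\bm X),\dots,f_{\tau_k}(\bm X))$ using (i). For the covariance, the CLT gives $\operatorname{Cov}(f_\tau(\bm X),f_{\tau'}(\bm X))$. Writing $f_\tau(\bm X)f_{\tau'}(\bm X)$ as a double integral of $\1\{\bm X\le\bm a\}\1\{\bm X\le\bm b\}=\1\{\bm X\le\bm a\wedge\bm b\}$ and applying Fubini, the second moment becomes $\iint L(\bm a\wedge\bm b)\,d\mu_\tau\,d\mu_{\tau'}$. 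Subtracting the product of the means $\int L\,d\mu_\tau\int L\,d\mu_{\tau'}$ gives \eqref{eq:lin-cov}, and this agrees with integrating \eqref{eq:bridge-cov} against $\mu_\tau\otimes\mu_{\tau'}$.
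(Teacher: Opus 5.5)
Your proposal is correct and follows the paper's proof essentially step by step. For (i) you both use Fubini. For (ii) you both apply the continuous mapping theorem to $\mathcal M$ restricted to the closed subspace $\mathcal B(\Rb^2)$ of bounded Borel functions, which contains the paths of $\G_n$ and of a $\rho_L$-continuous version of $\G$. For (iii) you both use the multivariate CLT on the representation in (i), then Fubini with $\1\{\bm X\le\bm a\}\1\{\bm X\le\bm b\}=\1\{\bm X\le\bm a\wedge\bm b\}$. The paper states explicitly a step you leave implicit: by (ii) and uniqueness of weak limits, the CLT limit is the finite-dimensional law of $\mathcal M(\G)$.

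Of your two arguments for Gaussianity, keep the CLT one. The ``$L^2$-limit'' route would still need an explicit approximation of $\int\G\,d\mu_\tau$ by finite combinations of the $\G(\bv)$. Also, strictly speaking, $\mathcal M$ does extend boundedly to all of $\ell^\infty(\Rb^2)$, coordinatewise by Hahn--Banach. Restricting it to $\mathcal B(\Rb^2)$, as you do, is all that is needed.
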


In words, a bounded linear image of the bridge is the limit of the
normalised i.i.d.\ sums of its \emph{influence functions} $f_\tau$, the images
of the functions $\mathsf e_\bx$. Its covariance is the covariance of these
influence functions under $L$, which is~\eqref{eq:bridge-cov} integrated
against $\mu_\tau\otimes\mu_{\tau'}$. Joint statements for several such images follow by taking
the disjoint union of their index sets. Lemma~\ref{lem:linear} is a direct consequence of Donsker's theorem for
the empirical cdf and of the continuous mapping theorem. Equivalently, each
$f_\tau$ lies in $m_\mu$ times the closed symmetric convex hull of the Donsker
class $\{\1\{\cdot\le\bv\}:\bv\in\Rb^2\}$, so $\{f_\tau:\tau\in\mathcal T\}$ is
itself Donsker \citetext{\citealp[Thm.~2.10.3]{vdVW1996}; \citealp[Ch.~10]{Dudley1999}}, and
$\mathcal M(\G)$ is the $L$-Brownian bridge indexed by these functions
\citep[see also][Ch.~3]{ShorackWellner1986}.

\paragraph{Linear maps.} The empirical process enters through bounded linear
maps on $\ell^\infty(\Rb^2)$. With $\beta,\gamma^\ast$ as
in~\eqref{eq:operators}, let
\begin{equation}\label{eq:Lc}
  \mathcal L_c(w)=t^c\beta(w)-t^{c2}\gamma^\ast(w),\qquad
  \mathcal L_c'(w)=\beta(w)-2t^c\gamma^\ast(w),
\end{equation}
the derivatives of $L\mapsto D(t^c)$ and $L\mapsto D'(t^c)$ in the direction $w$
(Lemma~\ref{lem:decomp}). They are combined into the real linear form and the
function-valued linear map
\begin{equation}\label{eq:elln}
  \ell_c(w)=\underbrace{\ip{\mathcal L_c(w)}{D'_c}}_{\text{noise along the tangent}}
  +\underbrace{\ip{D^c}{\mathcal L_c'(w)}}_{\text{tilt of the tangent}},\qquad
  \Phi_c(w)=\underbrace{\mathcal L_c(w)}_{\text{weight fixed}}
  -\underbrace{\frac{2\ell_c(w)}{J^c}\,D'_c}_{\text{weight re-fitted}} .
\end{equation}
The form $\ell_c(\G_n)$ is the leading term of half the empirical score
$\sqrt n\,\delta_n'(t^c)$, and $\Phi_c$ propagates a perturbation $w$ of $L$
to the discrepancy, once the weight has been re-estimated.

These maps are of the form of Lemma~\ref{lem:linear}. Since
$w^{(1)}(u_1)=w(u_1,+\infty)$ and $w^{(2)}(u_2)=w(+\infty,u_2)$,
\eqref{eq:operators} gives $\mathcal L_c(w)(\bu)=\int w\,d\nu_\bu$ and
$\mathcal L'_c(w)(\bu)=\int w\,d\nu'_\bu$, with
\begin{equation}\label{eq:nu}
\begin{aligned}
  \nu_\bu&=t^c\bigl[\delta_\bu-F_{0,2}(u_2)\,\delta_{(u_1,+\infty)}-F_{0,1}(u_1)\,\delta_{(+\infty,u_2)}\bigr]
  -t^{c2}\bigl[A_2(u_2)\,\delta_{(u_1,+\infty)}+A_1(u_1)\,\delta_{(+\infty,u_2)}\bigr],\\
  \nu'_\bu&=\bigl[\delta_\bu-F_{0,2}(u_2)\,\delta_{(u_1,+\infty)}-F_{0,1}(u_1)\,\delta_{(+\infty,u_2)}\bigr]
  -2t^c\bigl[A_2(u_2)\,\delta_{(u_1,+\infty)}+A_1(u_1)\,\delta_{(+\infty,u_2)}\bigr].
\end{aligned}
\end{equation}
The real form $\ell_c$ is obtained by integrating these coordinates against
$G$. Inserting~\eqref{eq:nu} into~\eqref{eq:elln} and collecting the
coefficients of $w(\bv)$, $w(v_1,+\infty)$ and $w(+\infty,v_2)$ gives, with the
bounded functions
\[
  a_c(\bv)=t^cD'_c(\bv)+D^c(\bv),\qquad b_c(\bv)=t^{c2}D'_c(\bv)+2t^cD^c(\bv),
\]
the explicit expression
\begin{equation}\label{eq:ell-explicit}
\begin{aligned}
  \ell_c(w)=\int_{\R^2}\Bigl\{&a_c(\bv)\,w(\bv)
  -\bigl[a_c(\bv)F_{0,2}(v_2)+b_c(\bv)A_2(v_2)\bigr]\,w(v_1,+\infty)\\
  &-\bigl[a_c(\bv)F_{0,1}(v_1)+b_c(\bv)A_1(v_1)\bigr]\,w(+\infty,v_2)\Bigr\}\,dG(\bv).
\end{aligned}
\end{equation}
In the language of Lemma~\ref{lem:linear}, $\ell_c(w)=\int w\,d\kappa_c$, where
$\kappa_c$ is the finite signed Borel measure on $\Rb^2$ that assigns to a
Borel set $B$ the mass
\[
  \kappa_c(B)=\int_{\R^2}\bigl[D'_c(\bv)\,\nu_\bv(B)+D^c(\bv)\,\nu'_\bv(B)\bigr]\,dG(\bv),
\]
a mixture over $\bv$ of the measures~\eqref{eq:nu} with weights $D'_c(\bv)$ and
$D^c(\bv)$. It is a single measure, not indexed by any argument, because $\ell_c$
is real-valued. In contrast, $\Phi_c(w)$ is a function of $\bu$, and its
coordinate at a fixed $\bu$ is $\Phi_c(w)(\bu)=\int w\,d\nu^\Phi_\bu$ with
$\nu^\Phi_\bu=\nu_\bu-(2D'_c(\bu)/J^c)\,\kappa_c$, where $2D'_c(\bu)/J^c$ is a
number. All these measures have uniformly bounded total variation, because
$|F_{0,j}|,|A_j|\le1$ and $D^c$, $D'_c$ are bounded. Integrating $\bv\mapsto\1\{\bx\le\bv\}$ against
the atoms in~\eqref{eq:nu}, as $\1\{\bx\le(u_1,+\infty)\}=\1\{x_1\le u_1\}$
and $\1\{\bx\le(+\infty,u_2)\}=\1\{x_2\le u_2\}$, gives the influence function
of $\mathcal L_c$,
\[
\begin{aligned}
  \mathcal L_c(\mathsf e_\bx)(\bu)={}&t^c\bigl[\1\{\bx\le\bu\}-\1\{x_1\le u_1\}F_{0,2}(u_2)-F_{0,1}(u_1)\1\{x_2\le u_2\}\bigr]\\
  &-t^{c2}\bigl[\1\{x_1\le u_1\}A_2(u_2)+A_1(u_1)\1\{x_2\le u_2\}\bigr],
\end{aligned}
\]
and similarly for $\mathcal L'_c$.

The empirical process enters these maps as a normalised sum. For every
$\bv\in\Rb^2$,
\[
  \G_n(\bv)=\sqrt n\,\bigl(\Ln(\bv)-L(\bv)\bigr)
  =\sqrt n\cdot\frac1n\sum_{i=1}^n\bigl(\1\{\bm X_i\le\bv\}-L(\bv)\bigr)
  =\frac1{\sqrt n}\sum_{i=1}^n\bigl(\mathsf e_{\bm X_i}(\bv)-L(\bv)\bigr).
\]
As functions of $\bv$, $\G_n=n^{-1/2}\sum_i(\mathsf e_{\bm X_i}-L)$ is a sum of
i.i.d.\ bounded random functions, centred because
$\EE\,\mathsf e_{\bm X}(\bv)=L(\bv)$. By linearity,
$\ell_c(\G_n)=n^{-1/2}\sum_i\bigl(\ell_c(\mathsf e_{\bm X_i})-\ell_c(L)\bigr)$.
Here $\ell_c(\mathsf e_{\bm X_i})=\int\1\{\bm X_i\le\bv\}\,d\kappa_c(\bv)=\lambda_c(\bm X_i)$
is the influence function below, evaluated at the $i$-th observation. The centring constant is identified through the
measure representation. A general bounded linear functional on $\ell^\infty$
need not commute with expectations, but $\ell_c(w)=\int w\,d\kappa_c$, so
Fubini's theorem gives
\[
  \ell_c(L)=\int\EE\,\1\{\bm X\le\bv\}\,d\kappa_c(\bv)
  =\EE\int\1\{\bm X\le\bv\}\,d\kappa_c(\bv)=\EE\lambda_c(\bm X).
\]
The same argument applies to $\Phi_c(\cdot)(\bu)$, for each $\bu$, with the
measure $\nu_\bu-2D'_c(\bu)\kappa_c/J^c$. This is Lemma~\ref{lem:linear}(i), which
yields the i.i.d.\ representations
\begin{equation}\label{eq:iid-rep}
  \ell_c(\G_n)=\frac1{\sqrt n}\sum_{i=1}^n\bigl(\lambda_c(\bm X_i)-\EE\lambda_c(\bm X)\bigr),\qquad
  \Phi_c(\G_n)(\bu)=\frac1{\sqrt n}\sum_{i=1}^n\bigl(\phi_{c,\bu}(\bm X_i)-\EE\phi_{c,\bu}(\bm X)\bigr),
\end{equation}
with the influence functions
\begin{equation}\label{eq:infl-c}
\begin{gathered}
  \lambda_c(\bx)=\ell_c(\mathsf e_\bx)=\int_{\R^2}\Bigl[\mathcal L_c(\mathsf e_\bx)(\bv)\,D'_c(\bv)
  +D^c(\bv)\,\mathcal L'_c(\mathsf e_\bx)(\bv)\Bigr]dG(\bv),\\
  \phi_{c,\bu}(\bx)=\Phi_c(\mathsf e_\bx)(\bu)=\mathcal L_c(\mathsf e_\bx)(\bu)-\frac{2\lambda_c(\bx)}{J^c}\,D'_c(\bu).
\end{gathered}
\end{equation}
Taking $w=\mathsf e_\bx$ in~\eqref{eq:ell-explicit} gives the score influence
function explicitly,
\[
\begin{aligned}
  \lambda_c(\bx)=\int_{\R^2}\Bigl\{&a_c(\bv)\1\{\bx\le\bv\}
  -\bigl[a_cF_{0,2}+b_cA_2\bigr](\bv)\,\1\{x_1\le v_1\}\\
  &-\bigl[a_cF_{0,1}+b_cA_1\bigr](\bv)\,\1\{x_2\le v_2\}\Bigr\}\,dG(\bv),
\end{aligned}
\]
where $[a_cF_{0,2}+b_cA_2](\bv)=a_c(\bv)F_{0,2}(v_2)+b_c(\bv)A_2(v_2)$, and similarly for the other bracket.
The identities~\eqref{eq:iid-rep} are exact for every $n$. Their summands are
i.i.d., centred and bounded uniformly in $(\bx,\bu)$. The central limit theorem
therefore applies directly, and Lemma~\ref{lem:linear}(ii)--(iii) identifies the
joint Gaussian limit of $(\ell_c(\G_n),\Phi_c(\G_n))$, whose covariance is that
of the influence functions.

\begin{theorem}[Joint central limit theorem]\label{thm:unified}
Assume either
\begin{enumerate}[label=(\alph*),leftmargin=*]
\item $H_0$ and~\ref{A:theta}--\ref{A:ident} (then $p^c=p^\ast$); or
\item $H_1$, $\theta_l>0$, and $d$ has a unique minimiser $p^c$ on $\Theta$, which
  lies in the interior of $\Theta$ and satisfies $\ddot d(p^c)>0$ (see
  Lemma~\ref{lem:contrast}(iii) for sufficient conditions).
\end{enumerate}
Then $J^c>0$ and, with $\mathbb W_n:=\sqrt n\bigl(D(\cdot\,;\Ln,\ph)-D(\cdot\,;L,p^c)\bigr)$,
\begin{align}
  \sqrt n(\ph-p^c)&=\frac{2p^{c2}}{J^c}\,\ell_c(\G_n)+O_P(n^{-1/2}),\label{eq:unif-p}\\
  \mathbb W_n&=\Phi_c(\G_n)+O_P(n^{-1/2})\quad\text{in }\ell^\infty(\Rb^2).\label{eq:unif-D}
\end{align}
Consequently, jointly in $\R\times\ell^\infty(\Rb^2)$,
\begin{equation}\label{eq:joint-clt}
  \Bigl(\sqrt n(\ph-p^c),\ \mathbb W_n\Bigr)\dto
  \Bigl(\frac{2p^{c2}}{J^c}\,\ell_c(\G),\ \Phi_c(\G)\Bigr),
\end{equation}
a centred Gaussian element. By Lemma~\ref{lem:linear}, its covariance is
that of the vector
\[
  \Bigl(\frac{2p^{c2}}{J^c}\,\lambda_c(\bm X),\ \bigl(\phi_{c,\bu}(\bm X)\bigr)_{\bu\in\Rb^2}\Bigr).
\]
\end{theorem}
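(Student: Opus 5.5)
The argument works in the parametrisation $t=1/p$, where everything is polynomial. I first record that $J^c>0$. Under (a), \eqref{eq:ddot} gives $J^c=2p^{\ast2}\nG\Psi^2>0$. Under (b), $\dot d(p^c)=0$, so the chain rule gives $J^c=p^{c4}\ddot d(p^c)>0$.

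The basic object is the exact decomposition of the empirical discrepancy. By \eqref{eq:Dn}, $D_n(t)=K_0+tB_n-t^2C_n$, and $B_n-B^\ast$, $C_n-C^\ast$ are exact functionals of $\Ln-L$. Writing $w=\G_n/\sqrt n$, linearity in the first argument gives $B_n-B^\ast=\beta(w)$. Bilinearity of $A_{n,1}\otimes A_{n,2}$ gives $C_n-C^\ast=\gamma^\ast(w)+w^{(1)}\otimes w^{(2)}$, where $\gamma^\ast(w)=w^{(1)}\otimes A_2+A_1\otimes w^{(2)}$. Hence, uniformly in $t\in T$ and exactly,
\[
  D_n(t)-D(t)=n^{-1/2}\bigl[t\beta(\G_n)-t^2\gamma^\ast(\G_n)\bigr]-n^{-1}t^2\,\G_n^{(1)}\otimes\G_n^{(2)},
\]
and $\nI{\G_n}=O_P(1)$ by Donsker. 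This is what I take Lemma~\ref{lem:decomp} to say, and it implies $D_n(t^c)-D^c=n^{-1/2}\mathcal L_c(\G_n)+O_P(n^{-1})$ and $D_n'(t^c)-D'_c=n^{-1/2}\mathcal L'_c(\G_n)+O_P(n^{-1})$ in sup-norm.

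Next comes the score expansion. Since $\delta_n$ is a quartic whose coefficients $a_{n,k}$ converge at rate $O_P(n^{-1/2})$ to those of $\delta$, consistency follows. Under (a) it is Theorem~\ref{thm:consistency}(i). Under (b) it is Theorem~\ref{thm:consistency}(ii) with the unique minimiser. Because $t^c$ is interior, with probability tending to one $\that$ is an interior critical point, so $\delta_n'(\that)=0$. A Taylor expansion of the polynomial $\delta_n'$ gives $0=\delta_n'(t^c)+\delta_n''(\tilde t)(\that-t^c)$. Here $\delta_n''\to\delta''$ uniformly on $T$ at rate $O_P(n^{-1/2})$, because the coefficients converge at that rate. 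Also $\delta_n'(t^c)=2\ip{D_n(t^c)}{D_n'(t^c)}$. Inserting the decomposition and using the orthogonality \eqref{eq:orth} gives
\[
  \tfrac12\sqrt n\,\delta_n'(t^c)=\ell_c(\G_n)+O_P(n^{-1/2}).
\]
Hence $\sqrt n\,\delta_n'(t^c)=O_P(1)$, so $\that-t^c=O_P(n^{-1/2})$. Bootstrapping this into the Taylor step, with $\delta_n''(\tilde t)=J^c+O_P(n^{-1/2})$, yields $\sqrt n(\that-t^c)=-2\ell_c(\G_n)/J^c+O_P(n^{-1/2})$. Translating with $\ph-p^c=-p^c\ph(\that-t^c)$ and $\ph=p^c+O_P(n^{-1/2})$ gives \eqref{eq:unif-p}.

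For \eqref{eq:unif-D}, I use the exact quadratic Taylor formula
\[
  D_n(\that)=D_n(t^c)+(\that-t^c)D_n'(t^c)-(\that-t^c)^2C_n.
\]
Each term is replaced by its expansion above. The second term equals $(\that-t^c)D'_c+O_P(n^{-1})$, because $D_n'(t^c)-D'_c=O_P(n^{-1/2})$ and $\that-t^c=O_P(n^{-1/2})$. The last term is $O_P(n^{-1})$ in sup-norm. Multiplying by $\sqrt n$ gives $\mathbb W_n=\mathcal L_c(\G_n)-\frac{2\ell_c(\G_n)}{J^c}D'_c+O_P(n^{-1/2})=\Phi_c(\G_n)+O_P(n^{-1/2})$.

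The joint convergence \eqref{eq:joint-clt} then follows from Lemma~\ref{lem:linear}. The pair $(\ell_c,\Phi_c(\cdot)(\bu))$ is given by the signed measures $\kappa_c$ and $\nu^\Phi_\bu$, whose total variation is uniformly bounded. Taking the disjoint union of index sets, Lemma~\ref{lem:linear}(ii) gives the joint weak limit, Slutsky absorbs the remainders, and Lemma~\ref{lem:linear}(iii) gives the covariance of the influence functions.

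The main obstacle is the step showing that $\that$ is an interior critical point within $O_P(n^{-1/2})$ of $t^c$. The a.s.\ rate in Theorem~\ref{thm:consistency} is only stated under $H_0$. Under (b), I would first use uniform convergence to get $\that\to t^c$, then $\delta_n''\ge J^c/2$ near $t^c$, so that the Taylor step yields the rate. Some care is also needed to check that the remainder $\ip{D^c}{\cdot}$ involving the quadratic term $\G_n^{(1)}\otimes\G_n^{(2)}$ is genuinely $O_P(n^{-1})$, so that it contributes only $O_P(n^{-1/2})$ to the score and does not bias it.
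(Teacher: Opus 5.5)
Your proposal is correct and follows essentially the same route as the paper's proof: the exact decomposition of $D_n-D$ from Lemma~\ref{lem:decomp}, interiority of $\that$ followed by a Taylor expansion of $\delta_n'$ with score $2\ell_c(\G_n)$ and curvature $J^c$, the exact quadratic Taylor formula for $D_n(\that)$, and a linear-map argument with Slutsky for the joint limit. The only minor difference is in case~(a), where the paper gets localisation and the $O_P(n^{-1/2})$ window directly from the value-comparison bound \eqref{eq:hhat-bound}; you instead obtain the rate from the Taylor step, as the paper itself does in case~(b), and both work.
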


The two terms of $\Phi_c$ in~\eqref{eq:elln} explain the structure of the limit. If the
weight were known, $\mathbb W_n$ would reduce to $\mathcal L_c(\G_n)$. Estimating
it moves the discrepancy along the tangent $D'_c$ by the amount
$\sqrt n(\that-t^c)\approx-2\ell_c(\G_n)/J^c$, i.e.\ score over curvature. The
test statistic is a fixed quadratic functional of $\mathbb W_n$.

\begin{corollary}[Representation of the statistic]\label{cor:repr}
Under the assumptions of Theorem~\ref{thm:unified}, exactly,
\begin{equation}\label{eq:repr}
  \mathsf T_n=\bigl\lVert\underbrace{\sqrt n\,D^c}_{\text{signal}}+\underbrace{\mathbb W_n}_{\text{noise}}\bigr\rVert_G^2
  =n\,d_{\min}+2\sqrt n\,\ip{D^c}{\mathbb W_n}+\nG{\mathbb W_n}^2 ,
\end{equation}
and, by~\eqref{eq:orth}, $\ip{D^c}{\Phi_c(w)}=\ip{D^c}{\mathcal L_c(w)}$ for every $w$.
Moreover, under $H_0$ as well as under $H_1$, the noise converges in law,
\begin{equation}\label{eq:noise-limit}
  \nG{\mathbb W_n}^2\dto\mathsf T^c_\infty:=\nG{\Phi_c(\G)}^2
  =\int_{\R^2}\bigl[\Phi_c(\G)(\bu)\bigr]^2\,dG(\bu),
\end{equation}
where $\Phi_c(\G)$ is the centred Gaussian process with covariance
$\operatorname{Cov}(\phi_{c,\bu}(\bm X),\phi_{c,\bv}(\bm X))$ (Lemma~\ref{lem:linear}).
In particular
$\EE\,\mathsf T^c_\infty=\int\operatorname{Var}\phi_{c,\bu}(\bm X)\,dG(\bu)$.
\end{corollary}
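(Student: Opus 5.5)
The representation~\eqref{eq:repr} is purely algebraic. By definition, $\mathsf T_n=n\,d_n(\ph)=n\nG{D(\cdot\,;\Ln,\ph)}^2$. By the definition of $\mathbb W_n$ in Theorem~\ref{thm:unified}, we have $\sqrt n\,D(\cdot\,;\Ln,\ph)=\sqrt n\,D^c+\mathbb W_n$, where $D^c=D(\cdot\,;L,p^c)$. Expanding the squared norm by bilinearity and using $\nG{D^c}^2=d(p^c)=d_{\min}$ gives the three terms. Every function involved is bounded Borel, so each term lies in $L^2(G)$, and the identity holds exactly, with no limiting argument.

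For the orthogonality identity, I would apply $\ip{D^c}{\cdot}$ to the definition~\eqref{eq:elln} of $\Phi_c$:
\[
\ip{D^c}{\Phi_c(w)}=\ip{D^c}{\mathcal L_c(w)}-\frac{2\ell_c(w)}{J^c}\ip{D^c}{D'_c}.
\]
Here $J^c>0$ by Theorem~\ref{thm:unified}. The first-order condition~\eqref{eq:orth}, $\ip{D^c}{D'_c}=0$, holds because $p^c$ is an interior minimiser. Under (a) this is Assumption~\ref{A:theta} together with $p^c=p^\ast$; under (b) it is assumed directly. The second term therefore vanishes. Under $H_0$ the identity is trivial anyway, since $D^c=0$.

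For~\eqref{eq:noise-limit}, I would start from~\eqref{eq:unif-D}: $\mathbb W_n=\Phi_c(\G_n)+R_n$ with $\nI{R_n}=O_P(n^{-1/2})$. By Lemma~\ref{lem:linear}(ii), applied with the measures $\nu^\Phi_\bu$, which have uniformly bounded total variation, $\Phi_c(\G_n)\dto\Phi_c(\G)$ in $\ell^\infty(\Rb^2)$. Slutsky's lemma in $\ell^\infty$ then gives $\mathbb W_n\dto\Phi_c(\G)$. The map $z\mapsto\nG z^2$ is continuous on the subspace of bounded Borel functions, because $|\nG z^2-\nG y^2|\le(\nI z+\nI y)\nI{z-y}$ by~\eqref{eq:L2-linf}.

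\textbf{The main obstacle} is measurability. $z\mapsto\nG z^2$ is not obviously well defined or continuous on all of $\ell^\infty(\Rb^2)$. To handle this, I would check that the limit $\Phi_c(\G)$ concentrates on a separable subset of bounded Borel functions, taking a version of $\G$ with Borel sample paths, and then invoke the extended continuous mapping theorem \citep{vdVW1996}. Alternatively, one can bypass the limit object's paths: show directly that $\nG{\Phi_c(\G_n)}^2$ converges by approximating $G$ with finitely supported measures, which reduces the problem to finite-dimensional convergence, and control the approximation error uniformly via tightness.

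The expectation formula follows from Fubini applied to the jointly measurable Gaussian process:
\[
\EE\,\mathsf T^c_\infty=\int\EE[\Phi_c(\G)(\bu)]^2\,dG(\bu).
\]
By Lemma~\ref{lem:linear}(iii), $\EE[\Phi_c(\G)(\bu)]^2=\operatorname{Var}\phi_{c,\bu}(\bm X)$, which gives the stated formula.
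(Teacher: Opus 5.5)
Your proposal is correct and follows the paper's proof essentially step by step. It uses the same expansion of $\nG{\sqrt n\,D^c+\mathbb W_n}^2$, the same use of~\eqref{eq:orth} to remove the re-fitting term, the continuous mapping theorem with Slutsky's lemma applied to~\eqref{eq:unif-D}, and Tonelli with a measurable version of the limit for the mean. The only differences are cosmetic: the paper applies Slutsky to the squared norms, bounding $|\nG{\mathbb W_n}^2-\nG{\Phi_c(\G_n)}^2|$ by $O_P(n^{-1/2})$ and mapping $z\mapsto\nG{\Phi_c(z)}^2$ on $\G_n\dto\G$, whereas you apply it to the processes first, and your measurability concern is handled as you propose, on the subspace of bounded Borel functions with a Borel-path version of $\G$, as in the proof of Lemma~\ref{lem:linear}(ii).
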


\begin{remark}[Structure of the reference law]\label{rem:Vstat}
With the centred influence functions
$\bar\phi_{c,\bu}=\phi_{c,\bu}-\EE\phi_{c,\bu}(\bm X)$, define the bounded,
symmetric kernel
\begin{equation}\label{eq:kernel}
  \bar h_c(\bx,\by)=\int_{\R^2}\bar\phi_{c,\bu}(\bx)\,\bar\phi_{c,\bu}(\by)\,dG(\bu).
\end{equation}
By~\eqref{eq:iid-rep} and Fubini's theorem, the leading term of the noise is exactly a $V$-statistic,
\[
  \nG{\Phi_c(\G_n)}^2=\frac1n\sum_{i,j=1}^n\bar h_c(\bm X_i,\bm X_j),
\]
as for the classical Cram\'er--von Mises and BKR statistics. The kernel is
degenerate, in the sense that $\int\bar h_c(\bx,\by)\,dL(\by)=0$ for every $\bx$,
which is why the limit is not Gaussian. Equivalently, $\mathsf T^c_\infty$ is a double Wiener--It\^o integral of
$\bar h_c$ with respect to $\G$, shifted by the trace
$\int\bar h_c(\bx,\bx)\,dL(\bx)=\EE\,\mathsf T^c_\infty$,
or a weighted series $\sum_k\mu_k\chi^2_{1,k}$ whose weights are the eigenvalues of
$\bar h_c$ \citep{DynkinMandelbaum1983}. The kernel depends on the unknown
$p^\ast$ and $F$, and its eigenvalues have no closed form, so neither
representation is used for calibration. The reference law is simulated instead
(Section~\ref{sec:bootstrap}).
\end{remark}

The asymptotic behaviour of the test statistic is read off~\eqref{eq:repr}
according to the size of the signal $\sqrt n\,D^c$: it vanishes under $H_0$,
is of order $\sqrt n$ under a fixed alternative, and is of order one under
local alternatives, while the noise $\nG{\mathbb W_n}^2$ remains of order one
in all cases. This suggests using the asymptotic law of the noise,
$\mathsf T^c_\infty$ in~\eqref{eq:noise-limit}, as the \emph{reference
distribution} of the test, the one whose quantiles must be tabulated, and
rejecting $H_0$ when $\mathsf T_n$ exceeds a high quantile of it. The same
reference is used whichever hypothesis holds: under $H_0$,
$\mathsf T_n=\nG{\mathbb W_n}^2$ exactly, so the test has asymptotic level
$\alpha$; under a fixed alternative, $\mathsf T_n-\nG{\mathbb W_n}^2=n\,d_{\min}+O_P(\sqrt n)\to\infty$
while the noise stays tight, so the power tends to one. How to approximate this
law from the data is the object of Section~\ref{sec:bootstrap}. The two
hypotheses are now made explicit, starting with the null.

\paragraph{The null process.} Theorem~\ref{thm:unified} involves no
process other than $\Phi_c(\G_n)$. Under $H_0$, however, this noise takes an
interpretable form. There $p^c=p^\ast$, so $t^c=t^\ast$, and
$t^\ast A_j=F_j-F_{0,j}$, so the measures~\eqref{eq:nu} reduce to $\nu_\bu=t^\ast\mu_\bu$ with
$\mu_\bu=\delta_\bu-F_2(u_2)\,\delta_{(u_1,+\infty)}-F_1(u_1)\,\delta_{(+\infty,u_2)}$
(Lemma~\ref{lem:decomp}(ii)). Equivalently, $\mathcal L_c(\G_n)=t^\ast\xi_n$ and
$\mathcal L_c(\G)=t^\ast\xi$, where
\begin{equation}\label{eq:xi}
  \xi_n=\G_n-\G_n^{(1)}\otimes F_2-F_1\otimes\G_n^{(2)},\qquad
  \xi=\G-\G^{(1)}\otimes F_2-F_1\otimes\G^{(2)}
\end{equation}
are the BKR independence process of the \emph{unobserved} component and its
limit. Thus $\xi$ is the image of $\G$ under the family $(\mu_\bu)$, an
instance of Lemma~\ref{lem:linear} with $\mathcal T=\Rb^2$ and $m_\mu\le3$, because $\G^{(1)}(u_1)=\G(u_1,+\infty)$ and $\G^{(2)}(u_2)=\G(+\infty,u_2)$.
Its influence function is obtained by integrating $\bv\mapsto\1\{\bx\le\bv\}$
against the three atoms of $\mu_\bu$. Since $\int g\,d\delta_{\bm a}=g(\bm a)$
and $x_1,x_2$ are finite, so that $\1\{\bx\le(u_1,+\infty)\}=\1\{x_1\le u_1\}$ and
$\1\{\bx\le(+\infty,u_2)\}=\1\{x_2\le u_2\}$,
\begin{equation}\label{eq:phi}
\begin{aligned}
  \varphi_{\bu}(\bx)=\int\1\{\bx\le\bv\}\,d\mu_\bu(\bv)
  &=\1\{\bx\le\bu\}-F_2(u_2)\,\1\{\bx\le(u_1,+\infty)\}-F_1(u_1)\,\1\{\bx\le(+\infty,u_2)\}\\
  &=\1\{\bx\le\bu\}-\1\{x_1\le u_1\}F_2(u_2)-F_1(u_1)\1\{x_2\le u_2\}.
\end{aligned}
\end{equation}
Conversely, $\int w\,d\mu_\bu=w(\bu)-w^{(1)}(u_1)F_2(u_2)-F_1(u_1)w^{(2)}(u_2)$
for every bounded $w$, which is~\eqref{eq:xi} when $w=\G_n$ or $w=\G$. In particular, taking $w=L$,
\[
  \int L\,d\mu_\bu=L(\bu)-L_1(u_1)F_2(u_2)-F_1(u_1)L_2(u_2)=\EE\varphi_\bu(\bm X).
\]
Inserting $\G_n(\bv)=n^{-1/2}\sum_i(\1\{\bm X_i\le\bv\}-L(\bv))$ and
integrating term by term against $\mu_\bu$ therefore gives
\[
  \xi_n(\bu)=\int\G_n\,d\mu_\bu
  =\frac1{\sqrt n}\sum_{i=1}^n\Bigl(\int\1\{\bm X_i\le\bv\}\,d\mu_\bu(\bv)-\int L\,d\mu_\bu\Bigr)
  =\frac1{\sqrt n}\sum_{i=1}^n\bigl(\varphi_\bu(\bm X_i)-\EE\varphi_\bu(\bm X)\bigr).
\]
Lemma~\ref{lem:linear} then gives $\xi_n\dto\xi$ in $\ell^\infty(\Rb^2)$,
where $\xi$ is centred Gaussian with covariance
\begin{equation}\label{eq:Gamma}
  \Gamma(\bu,\bv)=\operatorname{Cov}\bigl(\varphi_{\bu}(\bm X),\varphi_{\bv}(\bm X)\bigr).
\end{equation}
Expanding~\eqref{eq:lin-cov}, $\Gamma(\bu,\bv)$ is the sum of the nine terms
$c\,c'\,[L(\bm a\wedge\bm b)-L(\bm a)L(\bm b)]$, where $(c,\bm a)$ and
$(c',\bm b)$ range over the three atoms of $\mu_\bu$ and $\mu_\bv$. For instance,
the first term is $L(\bu\wedge\bv)-L(\bu)L(\bv)$, and the term pairing
$-F_2(u_2)\delta_{(u_1,+\infty)}$ with $-F_1(v_1)\delta_{(+\infty,v_2)}$ is
\[
  F_2(u_2)F_1(v_1)\,\bigl[L(u_1,v_2)-L_1(u_1)L_2(v_2)\bigr].
\]
The real-valued functional $\ip{\xi_n}\Psi$ is covered too, with
$\mu=\int\mu_\bv\,\Psi(\bv)\,dG(\bv)$ and influence function
$\int\varphi_\bv\Psi(\bv)\,dG(\bv)$. Combined with $\Phi_c=(I-\Pi)\mathcal L_c$
under $H_0$ (Corollary~\ref{thm:null} below), this shows that the null noise is
$\mathbb W_n=t^\ast(I-\Pi)\xi_n+O_P(n^{-1/2})$.
The null limit is non-degenerate under the following condition.

\begin{assumption}\label{A:nondeg}
The process $(I-\Pi)\xi$ is not $G$-a.e.\ degenerate, i.e.\
$\EE\nG{(I-\Pi)\xi}^2>0$.
\end{assumption}

\begin{corollary}[Null hypothesis]\label{thm:null}
Assume $H_0$ and \ref{A:theta}--\ref{A:ident}. Then $\mathcal L_c(\G_n)=t^\ast\xi_n$,
$\Phi_c(w)=(I-\Pi)\mathcal L_c(w)$ and $\mathsf T_n=\nG{\mathbb W_n}^2$, so that
\begin{align}
  \sqrt n(\ph-p^\ast) &= -\frac{\ip{\xi_n}{\Psi}}{\nG\Psi^2}+O_P(n^{-1/2}), \label{eq:phat-lin}\\
  \mathsf T_n &= \frac{1}{p^{\ast2}}\,\nG{(I-\Pi)\xi_n}^2+O_P(n^{-1/2}). \label{eq:T-lin}
\end{align}
Consequently, jointly,
\begin{equation}\label{eq:limits}
\begin{gathered}
  \sqrt n(\ph-p^\ast)\dto\mathcal N(0,\sigma^2),\qquad
  \mathsf T_n=\nG{\mathbb W_n}^2\dto \mathsf T_\infty,\\
  \mathsf T_\infty:=\mathsf T^c_\infty=\nG{\Phi_c(\G)}^2=\frac{1}{p^{\ast2}}\int_{\R^2}\bigl[(I-\Pi)\xi\bigr]^2(\bu)\,dG(\bu),
\end{gathered}
\end{equation}
with $\sigma^2=\nG\Psi^{-4}\iint\Gamma(\bu,\bv)\Psi(\bu)\Psi(\bv)\,dG(\bu)dG(\bv)$.
The kernel of Remark~\ref{rem:Vstat} is, with $\bar\varphi_\bu=\varphi_\bu-\EE\varphi_\bu(\bm X)$,
\begin{equation}\label{eq:null-kernel}
  \bar h_0(\bx,\by)=\frac1{p^{\ast2}}\int_{\R^2}\bigl[(I-\Pi)\bar\varphi_\cdot(\bx)\bigr](\bu)\,
  \bigl[(I-\Pi)\bar\varphi_\cdot(\by)\bigr](\bu)\,dG(\bu),
\end{equation}
where $(I-\Pi)$ acts on $\bu\mapsto\bar\varphi_\bu(\bx)$, and
$\EE\,\mathsf T_\infty=\int\bar h_0(\bx,\bx)\,dL(\bx)$. Under~\ref{A:nondeg},
$\mathsf T_\infty$ has a continuous and strictly increasing cdf on $[0,\infty)$.
\end{corollary}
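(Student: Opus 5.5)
The plan is to specialise Theorem~\ref{thm:unified}(a) and Corollary~\ref{cor:repr} to $H_0$, where $p^c=p^\ast$, $t^c=t^\ast$, $D^c=0$ and $D'_c=\Dd=-p^\ast\Psi$.

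\textbf{Step 1: the maps $\mathcal L_c$ and $\Phi_c$.} Using $t^\ast A_j=F_j-F_{0,j}$, substitute into \eqref{eq:nu}. The $F_{0,j}$ terms combine with $-t^{\ast2}A_j=-t^\ast(F_j-F_{0,j})$ into $-t^\ast F_j$. Hence $\nu_\bu=t^\ast\mu_\bu$ and $\mathcal L_c(\G_n)=t^\ast\xi_n$.

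Since $D^c=0$, the tilt term of $\ell_c$ vanishes, so $\ell_c(w)=\ip{\mathcal L_c(w)}{D'_c}=-p^\ast\ip{\mathcal L_c(w)}{\Psi}$. With $J^c=2p^{\ast2}\nG\Psi^2$,
\[
  \frac{2\ell_c(w)}{J^c}\,D'_c=\frac{\ip{\mathcal L_c(w)}{\Psi}}{\nG\Psi^2}\,\Psi=\Pi\mathcal L_c(w),
\]
so $\Phi_c=(I-\Pi)\mathcal L_c$. Moreover $D^c=0$ in \eqref{eq:repr} gives $\mathsf T_n=\nG{\mathbb W_n}^2$ exactly.

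\textbf{Step 2: the expansions.} From \eqref{eq:unif-p},
\[
  \sqrt n(\ph-p^\ast)=\frac{2p^{\ast2}}{J^c}\,\ell_c(\G_n)+O_P(n^{-1/2})
  =\frac{-p^\ast t^\ast\ip{\xi_n}\Psi}{\nG\Psi^2}+O_P(n^{-1/2}),
\]
which is \eqref{eq:phat-lin} because $p^\ast t^\ast=1$.

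For \eqref{eq:T-lin}, write $\mathbb W_n=t^\ast(I-\Pi)\xi_n+R_n$ with $\nI{R_n}=O_P(n^{-1/2})$. Then
\[
  \nG{\mathbb W_n}^2=t^{\ast2}\nG{(I-\Pi)\xi_n}^2+2t^\ast\ip{(I-\Pi)\xi_n}{R_n}+\nG{R_n}^2 .
\]
By \eqref{eq:L2-linf} and tightness of $\xi_n$ (Lemma~\ref{lem:linear}), the cross term is $O_P(1)\cdot O_P(n^{-1/2})$. Since $t^{\ast2}=1/p^{\ast2}$, this gives \eqref{eq:T-lin}.

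\textbf{Step 3: the joint limits.} Apply the continuous mapping theorem to the pair $(\ip{\xi_n}\Psi,\xi_n)$. It is jointly covered by Lemma~\ref{lem:linear} through the union of index sets. The maps $z\mapsto\ip z\Psi$ and $z\mapsto\nG{(I-\Pi)z}^2$ are continuous on $\ell^\infty$ by \eqref{eq:L2-linf}.

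The variance $\sigma^2$ follows from \eqref{eq:lin-cov} with the measure $\int\mu_\bv\Psi(\bv)\,dG(\bv)$, together with Fubini. The kernel \eqref{eq:null-kernel} is Remark~\ref{rem:Vstat} with $\bar\phi_{c,\bu}=t^\ast(I-\Pi)\bar\varphi_\cdot(\bx)(\bu)$, using linearity of $\Pi$ in the $\bu$ variable. The identity for $\EE\,\mathsf T_\infty$ then follows by Fubini.

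\textbf{Step 4: the cdf of $\mathsf T_\infty$ (main obstacle).} Regard $Z=(I-\Pi)\xi$ as a centred Gaussian element of $L^2(G)$. Its Karhunen--Lo\`eve expansion gives
\[
  \mathsf T_\infty=p^{\ast-2}\sum_k\mu_k\chi^2_{1,k},\qquad \mu_k\ge0,
\]
with the $\chi^2_{1,k}$ independent. Assumption~\ref{A:nondeg} says $\sum_k\mu_k=\EE\nG Z^2>0$, so at least one $\mu_k>0$.

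\emph{Continuity.} Write $\mathsf T_\infty=p^{\ast-2}\mu_1\chi^2_{1,1}+Y$ with $Y\ge0$ independent of $\chi^2_{1,1}$. Conditioning on $Y$ shows that the law of $\mathsf T_\infty$ has a density, hence a continuous cdf.

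\emph{Strict increase on $[0,\infty)$.} It suffices that every interval $(a,b)\subset(0,\infty)$ has positive probability.
\begin{itemize}
\item Choose $N$ with $\sum_{k>N}\mu_k$ small. Then the tail $\sum_{k>N}\mu_k\chi^2_{1,k}$ lies below a small $\varepsilon$ with positive probability, by Markov's inequality.
\item The finite sum $\sum_{k\le N}\mu_k\chi^2_{1,k}$ has a positive density on $(0,\infty)$, because $\chi^2_{1,1}$ has full support $(0,\infty)$.
\item By independence of the finite part and the tail, $(a,b)$ is hit with positive probability.
\end{itemize}

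The delicate points are the measurability of $Z$ as an $L^2(G)$ element and the identification of its covariance operator as trace class. Both follow from boundedness of $\varphi_\bu$, through the kernel $\bar h_0$.
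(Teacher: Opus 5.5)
Your proposal is correct and follows essentially the paper's route. You specialise Theorem~\ref{thm:unified}(a) and Corollary~\ref{cor:repr} to $D^c=0$, $D'_c=-p^\ast\Psi$, apply the continuous mapping theorem and Slutsky's lemma to $\xi_n\dto\xi$, and conclude from the Karhunen--Lo\`eve expansion with $\mu_1>0$. Two small remarks: your Markov-inequality argument that the tail $\sum_{k>N}\mu_k\chi^2_{1,k}$ charges every neighbourhood of $0$ makes explicit a step the paper's one-line strict-monotonicity claim leaves implicit; and the maps $z\mapsto\ip z\Psi$ and $z\mapsto\nG{(I-\Pi)z}^2$ should be taken on the subspace $\mathcal B(\R^2)$ of bounded Borel functions, which contains the paths of $\xi_n$ and of a Borel version of $\xi$, rather than on all of $\ell^\infty(\Rb^2)$.
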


\begin{corollary}[Fixed alternative]\label{thm:alt}
Assume case~(b) of Theorem~\ref{thm:unified} and $d_{\min}>0$. Then
$\ip{D^c}{\Phi_c(w)}=\ip{D^c}{\mathcal L_c(w)}$ for every $w$, and the statistic
decomposes exactly as
\begin{equation}\label{eq:alt-decomp}
  \mathsf T_n=\underbrace{n\,d_{\min}}_{\text{signal}}
  +\underbrace{2\sqrt n\,\ip{D^c}{\mathbb W_n}}_{\text{cross term}}
  +\underbrace{\nG{\mathbb W_n}^2}_{\text{noise}} ,
\end{equation}
whose three terms are of order $n$, $\sqrt n$ and $1$. Consequently
\begin{align}
  \sqrt n(\ph-p^c) &= \frac{2p^{c2}}{J^c}\,\frac1{\sqrt n}\sum_{i=1}^n\bigl(\lambda_c(\bm X_i)-\EE\lambda_c(\bm X)\bigr)+O_P(n^{-1/2}), \label{eq:alt-p}\\
  \mathsf T_n &= n\,d_{\min}+2\sum_{i=1}^n\bigl(\chi_c(\bm X_i)-\EE\chi_c(\bm X)\bigr)+O_P(1), \label{eq:alt-d}
\end{align}
with $\chi_c(\bx)=\ip{D^c}{\mathcal L_c(\mathsf e_\bx)}$. Hence, jointly,
\begin{equation}\label{eq:alt-limits}
  \sqrt n(\ph-p^c)\dto\mathcal N(0,\sigma_c^2),\qquad
  \frac{\mathsf T_n-n\,d_{\min}}{\sqrt n}=\sqrt n\bigl(d_n(\ph)-d_{\min}\bigr)\dto\mathcal N(0,\tau^2),
\end{equation}
with
\begin{equation}\label{eq:infl-alt}
  \sigma_c^2=\frac{4p^{c4}}{J^{c2}}\operatorname{Var}\lambda_c(\bm X),\qquad
  \tau^2=4\operatorname{Var}\chi_c(\bm X),\qquad
  \text{covariance }\frac{4p^{c2}}{J^c}\operatorname{Cov}\bigl(\lambda_c(\bm X),\chi_c(\bm X)\bigr).
\end{equation}
In particular $\mathsf T_n=n\,d_{\min}+O_P(\sqrt n)\to\infty$, whereas the
noise converges in law, as under $H_0$, to the reference distribution
\begin{equation}\label{eq:noise-alt}
  \nG{\mathbb W_n}^2\dto\mathsf T^c_\infty=\nG{\Phi_c(\G)}^2,\qquad
  \Phi_c(w)=\mathcal L_c(w)-\frac{2}{J^c}\Bigl(\ip{\mathcal L_c(w)}{D'_c}+\ip{D^c}{\mathcal L_c'(w)}\Bigr)D'_c ,
\end{equation}
where, unlike under $H_0$, the tilt term $\ip{D^c}{\mathcal L_c'(w)}$ and the
term $2\ip{D^c}{D''}$ in $J^c$ do not vanish. The kernel~\eqref{eq:kernel}
of Remark~\ref{rem:Vstat} is built from these alternative influence functions.
\end{corollary}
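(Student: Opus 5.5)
The plan is to read Corollary~\ref{thm:alt} off Theorem~\ref{thm:unified}(b) and Corollary~\ref{cor:repr}, which hold under exactly the present assumptions. There are four steps: an exact algebraic decomposition, expansion of the cross term, the joint CLT, and the noise limit.

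\emph{Step 1: decomposition and orthogonality.} By definition, $D(\cdot\,;\Ln,\ph)=D^c+n^{-1/2}\mathbb W_n$. Expanding $\mathsf T_n=n\nG{D(\cdot\,;\Ln,\ph)}^2$ by bilinearity gives \eqref{eq:alt-decomp}, with $\nG{D^c}^2=d_{\min}$; this step is exact. For the orthogonality identity, apply $\ip{D^c}{\cdot}$ to the definition~\eqref{eq:elln} of $\Phi_c$. The correction term is a multiple of $D'_c$, and $\ip{D^c}{D'_c}=0$ by the first-order condition~\eqref{eq:orth}, which holds because $p^c$ is an interior minimiser. Hence $\ip{D^c}{\Phi_c(w)}=\ip{D^c}{\mathcal L_c(w)}$.

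\emph{Step 2: the cross term.} By \eqref{eq:unif-D}, $\mathbb W_n=\Phi_c(\G_n)+R_n$ with $\nI{R_n}=O_P(n^{-1/2})$. Using $\nG{\cdot}\le\nI{\cdot}$ and Cauchy--Schwarz, $2\sqrt n\ip{D^c}{R_n}=O_P(1)$. Then Step~1 gives $2\sqrt n\ip{D^c}{\mathbb W_n}=2\sqrt n\ip{D^c}{\mathcal L_c(\G_n)}+O_P(1)$.

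The functional $w\mapsto\ip{D^c}{\mathcal L_c(w)}$ is integration against the finite signed measure $\int D^c(\bv)\nu_\bv\,dG(\bv)$. Lemma~\ref{lem:linear}(i), with Fubini as in the derivation of \eqref{eq:iid-rep}, therefore gives the exact identity $\sqrt n\ip{D^c}{\mathcal L_c(\G_n)}=\sum_i(\chi_c(\bm X_i)-\EE\chi_c(\bm X))$. This yields \eqref{eq:alt-d}.

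The noise term $\nG{\mathbb W_n}^2$ is $O_P(1)$, because \eqref{eq:noise-limit} gives its convergence in law. Equation~\eqref{eq:alt-p} is \eqref{eq:unif-p} combined with the i.i.d.\ representation \eqref{eq:iid-rep} of $\ell_c(\G_n)$.

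\emph{Step 3: the joint limit.} Divide \eqref{eq:alt-d} by $\sqrt n$. The $O_P(1)$ remainder becomes $o_P(1)$, so $(\mathsf T_n-nd_{\min})/\sqrt n=\sqrt n(d_n(\ph)-d_{\min})$ equals $2n^{-1/2}\sum_i(\chi_c(\bm X_i)-\EE\chi_c)+o_P(1)$. Likewise $\sqrt n(\ph-p^c)$ equals $(2p^{c2}/J^c)n^{-1/2}\sum_i(\lambda_c(\bm X_i)-\EE\lambda_c)+o_P(1)$.

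Both leading terms are normalised sums of the same i.i.d.\ sequence of bounded vectors $(\lambda_c(\bm X_i),\chi_c(\bm X_i))$. The bivariate CLT and Slutsky's lemma therefore give joint normality. The variances are $\sigma_c^2$ and $\tau^2$ as in \eqref{eq:infl-alt}, and the covariance is $(2p^{c2}/J^c)\cdot 2\operatorname{Cov}(\lambda_c,\chi_c)$. Divergence of $\mathsf T_n$ follows since $d_{\min}>0$ and the fluctuation is $O_P(\sqrt n)$.

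\emph{Step 4: the noise limit.} \eqref{eq:noise-alt} is \eqref{eq:noise-limit}, with $\Phi_c$ written out from \eqref{eq:elln}. Non-vanishing of the tilt term and of $2\ip{D^c}{D''}$ follows from $D^c\neq0$ when $d_{\min}>0$.

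The only delicate point is the order bookkeeping in Step~2. The $O_P(n^{-1/2})$ remainder of \eqref{eq:unif-D} is multiplied by $\sqrt n$ and produces an $O_P(1)$ term. This is harmless for \eqref{eq:alt-d}, which only claims an $O_P(1)$ remainder, and it becomes negligible after the $\sqrt n$ normalisation in \eqref{eq:alt-limits}. The sup-norm control from Theorem~\ref{thm:unified} is what licenses passing it through $\ip{D^c}{\cdot}$.
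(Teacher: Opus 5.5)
Your proof is correct and follows the paper's own argument essentially line by line. The decomposition and orthogonality come from Corollary~\ref{cor:repr} via \eqref{eq:orth}, the sup-norm remainder of \eqref{eq:unif-D} is passed through $\ip{D^c}{\cdot}$ at cost $O_P(1)$, Fubini gives the exact i.i.d.\ form of $\sqrt n\,\ip{D^c}{\mathcal L_c(\G_n)}$, and the bivariate CLT with Slutsky yields \eqref{eq:alt-limits}. The only loose point is in Step~4: $D^c\neq0$ does show that the tilt functional $w\mapsto\ip{D^c}{\mathcal L_c'(w)}$ is not identically zero, but it does not force $2\ip{D^c}{D''}=-4\ip{D^c}{C^\ast}\neq0$, so that part of the statement is only a generic remark, not something your argument derives.
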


\begin{remark}\label{rem:interpretation}
Four features are worth stressing. First, under $H_0$ the noise $\mathbb W_n$
is, up to $O_P(n^{-1/2})$, the BKR process $\xi_n$ of the \emph{unobserved}
component, written in terms of the observed empirical process and multiplied by
$t^\ast=1/p^\ast$: the statistic behaves as if only the $np^\ast$ observations from
the unknown component carried information. Second, the projection $I-\Pi$
is the price paid for estimating the weight: exactly one direction, $\Psi$, is
lost, as in Cram\'er--von Mises tests with estimated parameters. Third, under
a fixed alternative the estimation of the weight does not contribute to the
fluctuations of the statistic at order $\sqrt n$, because the re-fitting part
of $\Phi_c$ is orthogonal to $D^c$; and $\tau=0$ under $H_0$ explains why the
statistic must then be rescaled by $n$ rather than $\sqrt n$. Fourth, the null
limit depends on the unknown $(p^\ast,F_1,F_2)$, so the test is not
asymptotically distribution free and must be calibrated; see
Section~\ref{sec:bootstrap}.
\end{remark}

\begin{remark}[Local alternatives]\label{rem:local}
Consider alternatives $F=F_n$ with fixed marginals $F_1,F_2$ and dependence
$K_n=\kappa/\sqrt n$, $\kappa\in\ell^\infty(\Rb^2)$ (with zero marginals).
By~\eqref{eq:structure}, $\sqrt n D(\cdot\,;L_n,1/(t^\ast+h/\sqrt n))=h\Dd+\kappa+o(1)$
uniformly on compact sets of $h$. This is the intermediate regime
of~\eqref{eq:repr}, where signal and noise are of the same order. The argument
of Theorem~\ref{thm:unified} then gives, along the same lines (using a
triangular-array Donsker theorem),
$\mathsf T_n\dto\nG{(I-\Pi)(p^{\ast-1}\xi+\kappa)}^2$. The test therefore has
non-trivial local power against every direction $\kappa$ with
$(I-\Pi)\kappa\neq0$, the only undetectable local direction being $\Psi$. This
direction is the tangent at $\lambda=1$ of the curve of absorbed alternatives
constructed in Appendix~\ref{app:example}.
\end{remark}

\begin{remark}[Power approximation]\label{rem:power}
Assume $\tau>0$. For any constant $c>0$, Corollary~\ref{thm:alt} gives
\[
  \PP(\mathsf T_n>c)=\PP\Bigl(\sqrt n\bigl(d_n(\ph)-d_{\min}\bigr)>\frac{c}{\sqrt n}-\sqrt n\,d_{\min}\Bigr)
  \approx\Phi_{\mathcal N}\Bigl(\frac{\sqrt n\,d_{\min}-c/\sqrt n}{\tau}\Bigr),
\]
where $\Phi_{\mathcal N}$ is the standard normal cdf. With $c$ the critical value
$\widehat q_{1-\alpha}$, which is $O_P(1)$ by Theorem~\ref{thm:bootstrap}(ii), this
describes how the power approaches one. The signal-to-noise ratio that governs
it is $\sqrt n\,d_{\min}/\tau$. Both $d_{\min}$ and $\tau^2$ can be estimated by
plug-in, replacing $(L,p^c)$ by $(\Ln,\ph)$ in~\eqref{eq:infl-alt}.
\end{remark}

\subsection{Calibration by multiplier bootstrap}\label{sec:bootstrap}

\paragraph{What has to be calibrated.} By Corollary~\ref{cor:repr}, the
statistic is the squared norm of a signal plus a noise,
\[
  \mathsf T_n=\bigl\lVert\sqrt n\,D^c+\mathbb W_n\bigr\rVert_G^2,\qquad
  \mathbb W_n=\sqrt n\bigl(D(\cdot\,;\Ln,\ph)-D(\cdot\,;L,p^c)\bigr).
\]
The \emph{reference distribution} of the test is the asymptotic law of the
noise $\nG{\mathbb W_n}^2$. By Corollaries~\ref{cor:repr}--\ref{thm:alt}, it
exists under $H_0$ as well as under a fixed alternative:
\begin{equation}\label{eq:reference}
  \nG{\mathbb W_n}^2\dto\mathsf T^c_\infty=\nG{\Phi_c(\G)}^2,
  \qquad\text{with}\qquad
  \mathsf T^c_\infty=\mathsf T_\infty=\frac1{p^{\ast2}}\,\nG{(I-\Pi)\xi}^2\quad\text{under }H_0 .
\end{equation}
The test rejects $H_0$ when $\mathsf T_n$ exceeds a high quantile of this law,
and the same rule applies independently of the hypothesis in force:
\begin{itemize}[leftmargin=*]
\item under $H_0$ the signal vanishes and $\mathsf T_n=\nG{\mathbb W_n}^2$
  exactly, so $\mathsf T_n$ follows the reference law asymptotically and the
  level is $\alpha$;
\item under a fixed alternative, by~\eqref{eq:alt-decomp}, $\mathsf T_n$ exceeds
  the noise $\nG{\mathbb W_n}^2$, whose law is still close to the reference law,
  by the signal $n\,d_{\min}$ and the cross term $2\sqrt n\ip{D^c}{\mathbb W_n}$,
  which grow with $n$ and make the power tend to one
  (Corollary~\ref{thm:alt}).
\end{itemize}
Calibration thus amounts to approximating the law of the noise
$\nG{\mathbb W_n}^2$ from the data, without knowing which hypothesis holds.

\paragraph{Why a multiplier bootstrap.} The law~\eqref{eq:reference} depends
on the unknown quantities $p^\ast$, $F_1$ and $F_2$, both through the
covariance $\Gamma$ of $\xi$ and through the projection $\Pi$ that accounts
for the estimation of the weight. As for Cram\'er--von Mises statistics with
estimated parameters \citep{Durbin1973}, the test is therefore not
asymptotically distribution free, and its critical values must be estimated.
Three routes are available. (a) Tabulating the law of Corollary~\ref{thm:null}
through its weighted $\chi^2_1$ series (Remark~\ref{rem:Vstat}) requires the
eigenvalues of the unknown kernel $\bar h_0$, which must themselves be
estimated and truncated before an inversion formula such as
\citet{Imhof1961} can be applied. (b) The resampling bootstrap
\citep{GineZinn1990} or the parametric bootstrap of \citet{Stute1993}
requires redrawing samples from a distribution that satisfies $H_0$ and
re-estimating the weight on each of them. Here this means building a
contamination model whose unknown component has independent coordinates. The plug-in
candidate $(1-\ph)F_0+{}$\allowbreak$\ph\,\widehat F_{1}\otimes\widehat F_{2}$ need not be a
proper cdf, because the inverted marginals $\widehat F_j$ need not be
monotone. (c) The multiplier bootstrap avoids both difficulties: it simulates
copies of the noise $\mathbb W_n$ directly. The data are kept fixed and, in the
linear representation $\mathbb W_n=\Phi_c(\G_n)+O_P(n^{-1/2})$ of
Theorem~\ref{thm:unified}, the empirical process $\G_n$ is replaced by a
multiplier process and the unknown map $\Phi_c$ by a plug-in estimate. Each
replication then simulates the reference law~\eqref{eq:reference} directly,
under $H_0$ as well as under $H_1$,
with no resampling and no re-estimation of the weight. Its validity rests on
the conditional multiplier central limit theorem for Donsker classes; see
Section~2.9 of \citet{vdVW1996} and Chapter~10 of \citet{Kosorok2008}. It has
become the standard calibration tool for tests built on empirical copula
processes \citep{RemillardScaillet2009,KYH2011}, where it is much faster than
the parametric bootstrap at comparable accuracy \citep{KYH2011}, and it
compares favourably with resampling schemes in finite samples
\citep{BucherDette2010}. In our setting it is the natural choice because the
noise is, up to $O_P(n^{-1/2})$, a continuous linear map of the empirical
process. Moreover, by Theorem~\ref{thm:bootstrap}(ii), the bootstrap noise
still reproduces the law of the noise under the alternative, and in
particular remains bounded, so the test retains its power.

\paragraph{The procedure.} The bootstrap copy of $\mathbb W_n$ is built by
mimicking the representation $\mathbb W_n\approx\Phi_c(\G_n)$ in its two
ingredients.
\begin{itemize}[leftmargin=*]
\item \emph{The empirical process.} Let $\zeta_1,\dots,\zeta_n$ be i.i.d.\
  multipliers, independent of the data, with mean $0$, variance $1$ and finite
  $L^{2,1}$-norm,
  $\lVert\zeta_1\rVert_{2,1}=\int_0^\infty\sqrt{\PP(|\zeta_1|>x)}\,dx<\infty$
  (e.g.\ standard normal multipliers). The multiplier process
  \[
    \G_n^\zeta(\bu)=\frac1{\sqrt n}\sum_{i=1}^n\zeta_i\bigl(\1\{\bm X_i\le\bu\}-\Ln(\bu)\bigr)
  \]
  has, conditionally on the data, the same limit $\G$ as $\G_n$.
\item \emph{The map $\Phi_c$.} Its plug-in estimate replaces
  $(t^c,L)$ by $(\that,\Ln)$ in~\eqref{eq:Lc}--\eqref{eq:elln}. Let
  $\widehat D_n=D_n(\that)$, $\widehat D{}'_n=B_n-2\that C_n$ (the derivative of
  $t\mapsto D_n(\cdot\,;t)$ at $\that$; equivalently
  $\widehat D{}'_n=-\ph^{\,2}\,\dot D(\cdot\,;\Ln,\ph)$ by the chain
  rule~\eqref{eq:chain}), $\widehat J_n=\delta_n''(\that)=2\nG{\widehat D{}'_n}^2-4\ip{\widehat D_n}{C_n}$,
  $\widehat\gamma_n(w)=w^{(1)}\otimes A_{n,2}+A_{n,1}\otimes w^{(2)}$, and
  \[
  \begin{gathered}
    \widehat{\mathcal L}_n(w)=\that\beta(w)-\that^{\,2}\widehat\gamma_n(w)
    =\that\bigl(w-w^{(1)}\otimes\widehat F_2-\widehat F_1\otimes w^{(2)}\bigr),\qquad
    \widehat{\mathcal L}_n'(w)=\beta(w)-2\that\,\widehat\gamma_n(w),\\
    \widehat\Phi_n(w)=\widehat{\mathcal L}_n(w)
    -\frac{2}{\widehat J_n}\Bigl(\ip{\widehat{\mathcal L}_n(w)}{\widehat D{}'_n}+\ip{\widehat D_n}{\widehat{\mathcal L}_n'(w)}\Bigr)\widehat D{}'_n ,
  \end{gathered}
  \]
  where $\widehat F_j=\widehat F_{n,j}(\cdot\,;\ph)=\that A_{n,j}+F_{0,j}$ are the
  plug-in marginals of the unknown component. The map $\widehat\Phi_n$ estimates
  $\Phi_c$ under both hypotheses. Under $H_0$,
  $\widehat D_n=O_P(n^{-1/2})$ and $\widehat J_n=2\nG{\widehat D{}'_n}^2+O_P(n^{-1/2})$,
  so $\widehat\Phi_n$ reduces, up to $O_P(n^{-1/2})$ in operator norm, to its
  null form $(I-\widehat\Pi)\widehat{\mathcal L}_n$, where $\widehat\Pi$ is the
  $L^2(G)$-projection onto $\widehat D{}'_n$.
\end{itemize}
The bootstrap noise and the bootstrap statistic are
\begin{equation}\label{eq:bootstrap}
  \mathbb W_n^\zeta=\widehat\Phi_n(\G_n^\zeta),\qquad
  \mathsf T_n^\zeta=\nG{\mathbb W_n^\zeta}^2 .
\end{equation}
Its null form is $\that\,(I-\widehat\Pi)\,\xi_n^\zeta$, where
$\xi_n^\zeta=\widehat{\mathcal L}_n(\G_n^\zeta)/\that=\G_n^\zeta-\G_n^{\zeta(1)}\otimes\widehat F_2-\widehat F_1\otimes\G_n^{\zeta(2)}$
can be computed as
$\xi_n^\zeta(\bu)=n^{-1/2}\sum_{i}\zeta_i\bigl(\widehat\varphi_{\bu}(\bm X_i)-n^{-1}\sum_{k}\widehat\varphi_{\bu}(\bm X_k)\bigr)$,
with
$\widehat\varphi_{\bu}(\bx)=\1\{\bx\le\bu\}-\1\{x_1\le u_1\}\widehat F_2(u_2)-\widehat F_1(u_1)\1\{x_2\le u_2\}$.
This simpler version, used in the simulations of Section~\ref{sec:simu}, has
the same asymptotic level; the full version~\eqref{eq:bootstrap} also
reproduces the reference law under the alternative. The correspondence
between the noise and its bootstrap version is thus
\[
  \nG{\mathbb W_n}^2,\ \ \mathbb W_n=\Phi_c(\G_n)+O_P(n^{-1/2})
  \qquad\longleftrightarrow\qquad
  \mathsf T_n^\zeta=\nG{\mathbb W_n^\zeta}^2,\ \ \mathbb W_n^\zeta=\widehat\Phi_n(\G_n^\zeta),
\]
under $H_0$ as well as under $H_1$; under $H_0$, moreover, $\mathsf T_n=\nG{\mathbb W_n}^2$.
Let $\widehat q_{1-\alpha}$ be the conditional $(1-\alpha)$-quantile of
$\mathsf T_n^\zeta$ given the data, i.e.\ with respect to the law of the
multipliers only. The test is
\begin{equation}\label{eq:rule}
  \text{reject } H_0 \iff \mathsf T_n>\widehat q_{1-\alpha}.
\end{equation}

\paragraph{Monte Carlo computation.} The conditional quantile
$\widehat q_{1-\alpha}$ has no closed form, and it is computed by simulation, the
data being kept fixed. For $b=1,\dots,N$, draw a new vector of multipliers
$(\zeta_1^{(b)},\dots,\zeta_n^{(b)})$, independent of the data and of the other
vectors, and compute the corresponding bootstrap statistic
$\mathsf T_n^{\zeta(b)}$ by~\eqref{eq:bootstrap}. The plug-in quantities
$\that,\widehat F_1,\widehat F_2,\widehat D{}'_n,\widehat\Pi$ (and $\widehat D_n,\widehat J_n$ for the full
version of $\widehat\Phi_n$) are computed once: they depend only on the empirical
cdf $\Ln$ and on the known source $F_0$, through the coefficients $B_n,C_n$ and
the estimate $\that$, and are the same for all $b$. Only the multipliers change
from one replication to the next; in particular the weight is never
re-estimated. Then $\widehat q_{1-\alpha}$ is approximated by the
empirical $(1-\alpha)$-quantile of $\mathsf T_n^{\zeta(1)},\dots,\mathsf T_n^{\zeta(N)}$, and the
associated $p$-value is
\begin{equation}\label{eq:pvalue}
  \widehat\pi_N=\frac{1+\#\{b\le N:\ \mathsf T_n^{\zeta(b)}\ge\mathsf T_n\}}{N+1}.
\end{equation}
These $N$ bootstrap replications are computed from the same data set; they are
not new samples. The resulting Monte Carlo error depends only on $N$, not on $n$,
and vanishes as $N\to\infty$; the smallest attainable $p$-value is $1/(N+1)$. Each
replication costs one product of an $n\times n$ matrix with a vector when
$G=\Ln$, so that $N$ in the thousands is affordable. The values of $N$ used in
Sections~\ref{sec:simu}--\ref{sec:real} are indicated there.

\begin{theorem}\label{thm:bootstrap}
\begin{enumerate}[label=(\roman*),leftmargin=*]
\item Under $H_0$ and \ref{A:theta}--\ref{A:nondeg}, the conditional law of
the bootstrap noise $\nG{\mathbb W_n^\zeta}^2$ consistently estimates the
reference law~\eqref{eq:reference}:
$\sup_{x\ge0}\bigl|\PP(\mathsf T^\zeta_n\le x\mid\bm X_1,\dots,\bm X_n)-\PP(\mathsf T_\infty\le x)\bigr|\to0$
in probability. Consequently $\widehat q_{1-\alpha}\to q_{1-\alpha}$ in probability, where
$q_{1-\alpha}$ is the $(1-\alpha)$-quantile of $\mathsf T_\infty$, and
$\PP(\mathsf T_n>\widehat q_{1-\alpha})\to\alpha$.
\item Under case~(b) of Theorem~\ref{thm:unified} with $d_{\min}>0$, the conditional
law of $\nG{\mathbb W_n^\zeta}^2$ converges weakly, in probability, to the
reference law $\mathsf T^c_\infty=\nG{\Phi_c(\G)}^2$ of~\eqref{eq:reference}. In
particular $\mathsf T_n^\zeta=O_P(1)$ and $\widehat q_{1-\alpha}=O_P(1)$, whereas
$\mathsf T_n=n\,d_{\min}+O_P(\sqrt n)$, so $\PP(\mathsf T_n>\widehat q_{1-\alpha})\to1$.
For the null form of the bootstrap, $\widehat q_{1-\alpha}=O_P(1)$ and
$\PP(\mathsf T_n>\widehat q_{1-\alpha})\to1$ under any alternative with $d_{\min}>0$.
\end{enumerate}
\end{theorem}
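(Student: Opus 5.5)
The argument reduces everything to the conditional multiplier central limit theorem and a plug-in consistency statement for the map $\widehat\Phi_n$. First, I would apply the conditional multiplier CLT for Donsker classes (\citealp[Thm.~2.9.6--2.9.7]{vdVW1996}) to the class $\{\1\{\cdot\le\bu\}:\bu\in\Rb^2\}$. This gives $\G_n^\zeta\dto\G$ conditionally in $\ell^\infty(\Rb^2)$, in probability, in the bounded-Lipschitz sense. Next, I would establish $\lVert\widehat\Phi_n-\Phi_c\rVert_{\mathrm{op}}\to0$ in probability, with the operator norm taken from $\ell^\infty(\Rb^2)$ into $\ell^\infty(\Rb^2)$ (or into $L^2(G)$).

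For the operator step, the ingredients are explicit in $(\that,A_{n,j},B_n,C_n)$. Theorem~\ref{thm:consistency} gives $\that\to t^c$ a.s., and $\nI{A_{n,j}-A_j}$, $\nI{B_n-B^\ast}$, $\nI{C_n-C^\ast}$ are $O_{a.s.}(\sqrt{\log\log n/n})$. Hence $\widehat D_n\to D^c$, $\widehat D{}'_n\to D'_c$ and $\widehat J_n\to J^c>0$ uniformly. Since $\beta$ and $\widehat\gamma_n$ are bounded linear maps with norms controlled by $\nI{A_{n,j}}\le 1$, and $\widehat\gamma_n-\gamma^\ast$ has norm $\le 2\max_j\nI{A_{n,j}-A_j}$, the operator norm differences vanish. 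Then
\[
\bigl|\nG{\widehat\Phi_n(\G_n^\zeta)}^2-\nG{\Phi_c(\G_n^\zeta)}^2\bigr|\le \lVert\widehat\Phi_n-\Phi_c\rVert_{\mathrm{op}}\,\nI{\G_n^\zeta}\bigl(2\lVert\Phi_c\rVert_{\mathrm{op}}+\lVert\widehat\Phi_n-\Phi_c\rVert_{\mathrm{op}}\bigr)\nI{\G_n^\zeta}.
\]
This bound is $o_P(1)$ because $\nI{\G_n^\zeta}=O_P(1)$ unconditionally. By Markov's inequality applied to the conditional probability, it is therefore conditionally negligible in probability. The map $w\mapsto\nG{\Phi_c(w)}^2$ is continuous on $\ell^\infty$, so the continuous mapping theorem for conditional weak convergence (bounded-Lipschitz metric) yields $\mathsf T_n^\zeta\dto\nG{\Phi_c(\G)}^2=\mathsf T^c_\infty$ conditionally, in probability. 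This already gives the weak convergence claim of (ii).

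For (i), the limit under $H_0$ is $\mathsf T_\infty$ by Corollary~\ref{thm:null}. Under \ref{A:nondeg} that corollary also gives that $\mathsf T_\infty$ has a continuous, strictly increasing cdf. Pólya's theorem then upgrades weak convergence in probability to the uniform (Kolmogorov) statement. Strict monotonicity at $q_{1-\alpha}$ gives $\widehat q_{1-\alpha}\to q_{1-\alpha}$ in probability. Since $\mathsf T_n\dto\mathsf T_\infty$ by Corollary~\ref{thm:null} and $\mathsf T_\infty$ has no atom at $q_{1-\alpha}$, Slutsky's lemma gives $\PP(\mathsf T_n>\widehat q_{1-\alpha})\to\alpha$. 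The Monte Carlo approximation is not at issue here, because $\widehat q$ is the exact conditional quantile.

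For (ii), tightness of the conditional limit gives $\widehat q_{1-\alpha}=O_P(1)$. To see this, note that conditional convergence to a fixed tight law implies that for every $\varepsilon$ there is $M$ with $\PP(\widehat q_{1-\alpha}>M)<\varepsilon$ eventually. Combined with $\mathsf T_n=n\,d_{\min}+O_P(\sqrt n)$ from Corollary~\ref{thm:alt}, this yields power tending to one. For the null-form bootstrap under an arbitrary alternative with $d_{\min}>0$, I would not identify the limit. It suffices that $\that$ remains in the compact $T$, that $\widehat F_j$ is bounded by $1+2t_u$, and that $\nG{\widehat D{}'_n}$ stays bounded away from zero, or else the projection is harmless since $\lVert I-\widehat\Pi\rVert_{\mathrm{op}}\le 1$ regardless. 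Then $\nG{\that(I-\widehat\Pi)\xi_n^\zeta}\le t_u\nI{\xi_n^\zeta}\le t_u(3+4t_u)\nI{\G_n^\zeta}=O_P(1)$, which gives boundedness directly.

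I expect the main obstacle to be making the conditional continuous-mapping step rigorous with random operators $\widehat\Phi_n$ that depend on the data. The approach above handles it by separating the operator error, which is $o_P(1)$ in norm, from the multiplier process, which is $O_P(1)$ in sup norm, and by working in the bounded-Lipschitz metric throughout. A secondary point is that, in (ii), $\widehat J_n$ must converge to $J^c=p^{c4}\ddot d(p^c)>0$. This requires $\that\to t^c$, which uses the unique-minimiser assumption of case~(b) through Theorem~\ref{thm:consistency}(ii).
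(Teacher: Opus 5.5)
Your proposal is correct. For part (ii) it is essentially the paper's argument: almost sure convergence of $(\that,\widehat D_n,\widehat D{}'_n,\widehat J_n,\widehat\gamma_n)$ gives $\lVert\widehat\Phi_n-\Phi_c\rVert_{\mathrm{op}}\to0$. The difference of squared norms is then bounded by the operator error times $\nI{\G_n^\zeta}^2$, the unconditional $o_P(1)$ bound is transferred to the conditional law by Markov's inequality, and the conditional multiplier CLT with continuous mapping concludes. The paper compares with $\Phi_c(\widetilde\G_n^\zeta)$, centred at $L$, and absorbs $S_\zeta E_n$, whereas you use the $\Ln$-centred multiplier theorem directly; the two are equivalent.

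You depart from the paper in part (i). You reuse the same operator-consistency argument under $H_0$. This is legitimate: Theorem~\ref{thm:consistency}(i) gives $\that\to t^\ast$ a.s., and $J^c=2p^{\ast2}\nG\Psi^2>0$ under \ref{A:theta}--\ref{A:ident}, so $\Phi_c=(I-\Pi)\mathcal L_c$ and the limit is $\mathsf T_\infty$. The paper proceeds in two steps instead. It first shows that the full bootstrap and its null form $\that^{\,2}\nG{(I-\widehat\Pi)\xi_n^\zeta}^2$ differ by $O_P(n^{-1/2})$. It then compares the null form with the oracle $t^{\ast2}\nG{(I-\Pi)\widetilde\xi_n^\zeta}^2$ through explicit perturbation bounds (Lemma~\ref{lem:proj} and \eqref{eq:boot-rem}--\eqref{eq:dir-rem}), using the $\sqrt n$-rate of $\that$ from \eqref{eq:hhat-bound}.

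Your route is shorter and needs only consistency of the plug-ins. The paper's route gives an explicit $O_P(n^{-1/2})$ bootstrap error and directly validates the null-form bootstrap used in the simulations. Your argument covers that case too, since $(I-\widehat\Pi)\widehat{\mathcal L}_n\to(I-\Pi)\mathcal L_c$ in operator norm, but you do not say so.

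One small point: for the null-form claim under an arbitrary alternative with $d_{\min}>0$, the divergence of $\mathsf T_n$ must come from Theorem~\ref{thm:consistency}(ii), namely $\mathsf T_n\ge n\,d_{\min}-60t_u^4\sqrt n\,\kappa_n$. It cannot come from Corollary~\ref{thm:alt}, which requires case~(b).
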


\subsection{Data-driven integrating measure}

The natural Cram\'er--von Mises choice is $G=\Ln$, in which case all integrals
reduce to averages over the sample points and the whole procedure costs
$O(n^2)$ operations. The following result shows that this choice leads to the
same asymptotic theory, with $G$ replaced by $L$.

\begin{proposition}\label{prop:empiricalG}
Assume that $F_0$ and $L$ are continuous, and that \ref{A:theta}--\ref{A:nondeg} hold with $G=L$.
Then Theorem~\ref{thm:consistency}(i)--(ii), Corollary~\ref{thm:null} and Theorem~\ref{thm:bootstrap}
remain valid when $G$ is replaced by $\Ln$ in~\eqref{eq:dn} and~\eqref{eq:bootstrap},
with $G=L$ in the limiting quantities.
\end{proposition}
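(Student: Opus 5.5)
The plan is to reduce everything to the fixed-$G$ theory with $G=L$. All relevant quantities are polynomial in $t$, with coefficients given by inner products $\langle f,g\rangle_{\Ln}$ of random but uniformly bounded functions. So it suffices to show that replacing $\langle f_n,g_n\rangle_{\Ln}$ by $\langle f,g\rangle_L$ costs $O_P(n^{-1/2})$ (or $o_{a.s.}(1)$) for the finitely many functions involved, namely $K_0,B_n,C_n$ and their population versions. It also suffices to show that the second-order terms entering $\mathsf T_n$ and its bootstrap version change only by $o_P(1)$.

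\textbf{Consistency.} I would write
\[
\langle f_n,g_n\rangle_{\Ln}-\langle f,g\rangle_L=\langle f_n,g_n\rangle_{\Ln}-\langle f,g\rangle_{\Ln}+\int fg\,d(\Ln-L).
\]
The first difference is bounded by $2\sup\nI{f_n-f}$, which is $O_{a.s.}(\sqrt{\log\log n/n})$ by Theorem~\ref{thm:consistency}. The second term involves fixed bounded functions $fg$ built from cdfs and products of marginal cdfs. Because $F_0$ and $L$ are continuous, these functions have bounded Hardy--Krause-type variation, or at least belong to a Glivenko--Cantelli/Donsker class: products of monotone functions of the coordinates and of quadrant indicators integrated against $F_0$. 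Hence the second term is $O_P(n^{-1/2})$, and it is $O_{a.s.}$ of the LIL rate. The quartic coefficients $a_{n,k}$ therefore converge to those of $\delta$ computed with $G=L$. The arguments of Theorem~\ref{thm:consistency}(i)--(ii) then go through verbatim, using \eqref{eq:lowerbound} with $G=L$.

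\textbf{Null limit.} Under $H_0$ with $G=\Ln$ one still has $\mathsf T_n=\|\mathbb W_n\|_{\Ln}^2$, and the expansion of $\that$ is score over curvature. Two changes must be controlled.

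First, the score $\sqrt n\,\delta_n'(t^\ast)=2\sqrt n\langle D_n(t^\ast),D_n'(t^\ast)\rangle_{\Ln}$ now contains the extra term
\[
\sqrt n\int D(t^\ast)D'(t^\ast)\,d(\Ln-L).
\]
This term vanishes because $D(t^\ast)=0$ under $H_0$. The key point is that every extra empirical-measure fluctuation is multiplied by the population discrepancy, which is zero at the null. The curvature changes only by $o_P(1)$.

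Second, $\|\mathbb W_n\|_{\Ln}^2-\|\mathbb W_n\|_L^2=\int \mathbb W_n^2\,d(\Ln-L)$. Here $\mathbb W_n$ is a random process, so it must be approximated before integrating. I would replace $\mathbb W_n$ by $t^\ast(I-\Pi)\xi_n$ plus $O_P(n^{-1/2})$ in sup norm (Theorem~\ref{thm:unified}; the projection $\Pi$ defined with $\Ln$ differs from the one defined with $L$ by $O_P(n^{-1/2})$). By continuity and a Skorokhod/almost-sure representation, $\xi_n$ converges in $\ell^\infty$ to a process $\xi$ with continuous paths, since $L$ is continuous. On compacts, $\int \xi^2\,d(\Ln-L)\to0$ follows because $\Ln\to L$ weakly and the integrand is bounded and continuous. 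A standard $\varepsilon$-argument with uniform tightness gives $\int\mathbb W_n^2\,d(\Ln-L)=o_P(1)$. This is the main obstacle: the integrand is random and depends on the same sample, so the continuity of $L$ and $F_0$ (asymptotic equicontinuity of $\xi_n$) is essential. I would first try to prove it via the bounded-Lipschitz metric for $\Ln\to L$ together with equicontinuity.

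\textbf{Bootstrap.} The same decomposition applies conditionally. The plug-in map $\widehat\Phi_n$ defined with $\Ln$ differs from the one defined with $L$ by $o_P(1)$ in operator norm, by the inner-product convergence above. The bootstrap multiplier process $\mathbb W_n^\zeta$ is conditionally asymptotically equicontinuous by the conditional multiplier CLT. Hence $\int(\mathbb W_n^\zeta)^2\,d(\Ln-L)=o_P(1)$ conditionally, and Theorem~\ref{thm:bootstrap} applies with $G=L$. The continuity of the limit cdf under~\ref{A:nondeg} upgrades weak convergence to uniform convergence of the cdfs and to convergence of the quantiles.
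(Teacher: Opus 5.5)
Your handling of the random integrands is the mechanism the paper uses: the paths of $\xi$ are continuous because $L$ and $F_0$ are, and $\Ln\Rightarrow L$. However, two steps fail as written.

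First, your consistency step does not give the rate claimed in Theorem~\ref{thm:consistency}(i). Additive convergence of the quartic coefficients yields $u_n:=\sup_{t\in T}\bigl|\delta_n(t)-\lVert D(t)\rVert_L^2\bigr|=O_{a.s.}(\sqrt{\log\log n/n})$. Combined with \eqref{eq:lowerbound} at $G=L$, this only gives $c_L^2p^{\ast2}(\that-t^\ast)^2\le\lVert D(\that)\rVert_L^2\le2u_n$, i.e.\ $|\that-t^\ast|=O_{a.s.}\bigl((\log\log n/n)^{1/4}\bigr)$. The fluctuation $\int fg\,d(\Ln-L)$ of fixed functions is genuinely of LIL order, and a uniform additive comparison of the two quartics ignores that both vanish to second order at $t^\ast$. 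If instead you run the chain of Theorem~\ref{thm:consistency}(i) verbatim in the $\Ln$-norm, you get $\lVert D(\that)\rVert_{\Ln}\le12t_u^2\nI{E_n}$. The lower bound must then hold in that same norm, which \eqref{eq:lowerbound} with $G=L$ does not provide. The missing idea is to use the factorisation \eqref{eq:structureH0}, $D(t)=(1-s)(K_0+s\Delta)$. It gives $\lVert D(t)\rVert_{\Ln}\ge c_np^\ast|t-t^\ast|$ with the random constant $c_n=\inf_{s\in S}\lVert K_0+s\Delta\rVert_{\Ln}$. The square of this norm is a quadratic in $s$ with coefficients $\int K_0^2\,d\Ln$, $\int K_0\Delta\,d\Ln$, $\int\Delta^2\,d\Ln$, so $c_n\to c_L>0$ a.s.\ by the strong law. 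This restores the rate, and with it the window \eqref{eq:window} used for localisation in the null expansion. For these fixed functions the strong law and the Hartman--Wintner LIL suffice; no Hardy--Krause variation, Donsker argument or continuity is needed there.

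Second, your operator-norm claims are false. Since $L$ is continuous, $\Ln$ and $L$ are mutually singular. Hence, for bounded $g\neq0$, $\sup_{\nI w\le1}\bigl|\int wg\,d(\Ln-L)\bigr|=\int|g|\,d\Ln+\int|g|\,dL$, which does not tend to $0$ (take $w=\operatorname{sign}(g)$ at the sample points and $-\operatorname{sign}(g)$ elsewhere). So the $\Ln$- and $L$-versions of $\Pi$, or of $\widehat\Phi_n$, are not close in operator norm on $\ell^\infty(\Rb^2)$. Your fixed-function inner-product convergence therefore says nothing about two cross terms:
\begin{itemize}
\item $\int\xi_n\Psi\,d(\Ln-L)$, which drives the limit of $\sqrt n(\ph-p^\ast)$ and of the projection;
\item $\int\widehat{\mathcal L}_n(\G_n^\zeta)\,\widehat D{}'_n\,d(\Ln-L)$ in the bootstrap.
\end{itemize}
Applied to $\xi_n$, the first is in fact $O_P(n^{-1/2})$, being a degenerate $V$-statistic, but that needs its own argument. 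These terms must be treated exactly like $\int\mathbb W_n^2\,d(\Ln-L)$. The paper does this in three steps:
\begin{enumerate}
\item it reduces $\mathsf T_n$ to $t^{\ast2}\bigl(\lVert\xi_n\rVert_{\Ln}^2-\langle\xi_n,\Psi\rangle_{\Ln}^2/\lVert\Psi\rVert_{\Ln}^2\bigr)+O_P(n^{-1/2})$;
\item it applies the extended continuous mapping theorem to $(z,\mu)\mapsto(\int z^2\,d\mu,\int z\Psi\,d\mu)$ along $(\xi_n,\Ln)\dto(\xi,L)$; this map is continuous at $C_b(\R^2)\times\{L\}$ because $\Psi$ is continuous;
\item it repeats the same argument conditionally for $\xi_n^\zeta$.
\end{enumerate}
With these two repairs your outline goes through.
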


\section{Simulation study}\label{sec:simu}

\input{simulation_section}

\input{realdata_section}

\section{Discussion}\label{sec:discussion}

The quadratic structure of Lemma~\ref{lem:algebra} is not specific to the
bivariate case. For a $d$-variate contamination model and the hypothesis of mutual
independence, $F(\cdot\,;p)-\bigotimes_{j=1}^dF_j(\cdot\,;p)$ is a polynomial of
degree $d$ in $t=1/p$, and the arguments of the Appendix carry over with
$\Psi$ replaced by the derivative of this polynomial at $t^\ast$; the
identifiability condition becomes a non-vanishing condition for a polynomial
curve in $L^2(G)$. Other natural extensions include testing a parametric
copula for the unknown component (the product copula being the simplest
case), a known-up-to-a-parameter known source $F_0$ in the spirit of
\citet{MPSV2024}, and $K$-sample versions of the problem. Finally,
Appendix~\ref{app:example} and Remark~\ref{rem:local} show that the
alternatives that cannot be detected are those in which the unknown component
contains a fraction of the known source, i.e.\ a one-parameter curve of
dependence functions in $\operatorname{span}\{K_0,\Delta\}$ whose tangent at the
null is $\Psi$. The choice of $G$ could be optimised to increase power in
directions of practical interest.

\appendix
\section{Proofs}\label{app:proofs}

\subsection{Key quantities and two auxiliary lemmas}\label{app:prelim}

All proofs are organised around a small number of quantities, collected in
Table~\ref{tab:key}. Every approximation below is obtained from an
\emph{exact} identity, and the remainder is bounded explicitly in terms of
the single random variable
\[
  \kappa_n:=\nI{\G_n}=\sqrt n\,\nI{\Ln-L},
\]
which is $O_P(1)$ because the class of lower-left quadrants is $L$-Donsker
for every $L$ \citep[Ch.~2.5]{vdVW1996}, and which satisfies
$\kappa_n=O_{a.s.}(\sqrt{\log\log n})$ by the law of the iterated logarithm
\citep{Kiefer1961,Alexander1984}. Since the marginals of $\Ln$ are the
empirical marginal cdfs, $\lVert\G_n^{(j)}\rVert_\infty\le\kappa_n$.

\begin{table}[h]
\centering\small
\caption{Key quantities used in the proofs.}\label{tab:key}
\begin{tabular}{p{0.32\textwidth}p{0.6\textwidth}}
\toprule
Quantity & Role \\
\midrule
$E_n=\Ln-L$, $\G_n=\sqrt nE_n$ & empirical error and empirical process \\
$\kappa_n=\nI{\G_n}$ & size of $\G_n$; all remainders are polynomials in $\kappa_n$ \\
$t=1/p$, $t^c=1/p^c$, $T=[t_l,t_u]$ &
  inverse weight, in which $D$ is a quadratic polynomial; $t^c=t^\ast$ under $H_0$ \\
$\widehat h_n=\sqrt n(\that-t^c)$ & rescaled estimation error \\
$S_n=\sqrt n\,\delta_n'(t^c)$, $J_n$, $J^c=\delta''(t^c)$ & score, empirical and population curvature \\
$\ell_c$, $\Phi_c$ & linear maps giving the limits of the weight and of the discrepancy \\
$\mathbb W_n=\sqrt n\bigl(D(\cdot\,;\Ln,\ph)-D^c\bigr)$ & estimated discrepancy, centred and rescaled \\
$\xi_n=\G_n-\G_n^{(1)}\otimes F_2-F_1\otimes\G_n^{(2)}$ &
  BKR process of the unobserved component; $\nI{\xi_n}\le3\kappa_n$ \\
$\Psi=K_0+\Delta$, $\Dd=-p^\ast\Psi$ &
  direction lost by estimating $p$; $\nI\Psi\le2$, $\nG\Psi\ge c_G$ \\
$\Pi$ & $L^2(G)$-projection onto $\operatorname{span}\{\Psi\}$ \\
$r_T=\min(t^\ast-t_l,\,t_u-t^\ast)$ & distance to the boundary, $>0$ under \ref{A:theta} \\
\bottomrule
\end{tabular}
\end{table}

For a bounded function $w\in\ell^\infty(\Rb^2)$ define the bounded linear operators
\begin{equation}\label{eq:operators}
  \beta(w)=w-w^{(1)}\otimes F_{0,2}-F_{0,1}\otimes w^{(2)},\qquad
  \gamma^\ast(w)=w^{(1)}\otimes A_2+A_1\otimes w^{(2)} ,
\end{equation}
for which $\nI{\beta(w)}\le3\nI w$ and $\nI{\gamma^\ast(w)}\le2\nI w$, since
$|A_j|\le1$. The operator $\beta$ involves only the known marginals
$F_{0,1},F_{0,2}$ and is therefore free from $p^\ast$. In contrast, $\gamma^\ast$
involves $A_j=L_j-F_{0,j}=p^\ast(F_j-F_{0,j})$. It depends on the true model, as
the star indicates (the same convention as for $B^\ast$ and $C^\ast$), and can be written
\[
  \gamma^\ast(w)=p^\ast\bigl[w^{(1)}\otimes(F_2-F_{0,2})+(F_1-F_{0,1})\otimes w^{(2)}\bigr].
\]
We also use $|K_0|\le1$, $|B^\ast|\le3$, $|C^\ast|\le1$, and the same bounds
for $B_n,C_n$, so that, as $t_u\ge1$,
\begin{equation}\label{eq:Dbound}
  \sup_{t\in T}\bigl(\nI{D(\cdot\,;1/t)}\vee\nI{D_n(\cdot\,;t)}\bigr)\le 1+3t_u+t_u^2\le5t_u^2 .
\end{equation}
Throughout we abbreviate $D_n(t)=D_n(\cdot\,;t)$ and $D(t)=D(\cdot\,;L,1/t)$.
Generic bivariate functions, such as the arguments of $\beta$ and
$\gamma^\ast$, are denoted by $w$.

\begin{lemma}[Exact error decomposition]\label{lem:decomp}
\begin{enumerate}[label=(\roman*),leftmargin=*]
\item For every $t>0$, under $H_0$ or $H_1$,
\begin{equation}\label{eq:Dn-D}
  D_n(t)-D(t)=t\,\beta(E_n)-t^2\gamma^\ast(E_n)-t^2E_n^{(1)}\otimes E_n^{(2)} .
\end{equation}
\item Under $H_0$, for every $w\in\ell^\infty(\Rb^2)$,
$t^\ast\beta(w)-t^{\ast2}\gamma^\ast(w)=t^\ast\bigl(w-w^{(1)}\otimes F_2-F_1\otimes w^{(2)}\bigr)$.
In particular
$\sqrt n\,D_n(t^\ast)=t^\ast\xi_n-t^{\ast2}n^{-1/2}\G_n^{(1)}\otimes\G_n^{(2)}$.
\end{enumerate}
\end{lemma}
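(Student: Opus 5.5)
The identity follows from the observable form of Lemma~\ref{lem:algebra}(iii), applied once to $\Ln$ and once to $L$. By~\eqref{eq:Dn} and~\eqref{eq:poly},
\[
  D_n(t)-D(t)=t\,(B_n-B^\ast)-t^2\,(C_n-C^\ast),
\]
because the known term $K_0$ cancels. The proof of (i) therefore amounts to expressing the two coefficient differences through $E_n=\Ln-L$.

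\emph{The coefficient $B$.} Since $A_n-A=E_n$ and $A_{n,j}-A_j=E_n^{(j)}$ (the marginals of $\Ln$ are the empirical marginals), linearity of the definition in~\eqref{eq:ABC} gives
\[
  B_n-B^\ast=E_n-E_n^{(1)}\otimes F_{0,2}-F_{0,1}\otimes E_n^{(2)}=\beta(E_n).
\]

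\emph{The coefficient $C$.} Write $A_{n,j}=A_j+E_n^{(j)}$ and expand the tensor product bilinearly:
\[
  C_n-C^\ast=(A_1+E_n^{(1)})\otimes(A_2+E_n^{(2)})-A_1\otimes A_2
  =\gamma^\ast(E_n)+E_n^{(1)}\otimes E_n^{(2)}.
\]
Substituting both differences gives~\eqref{eq:Dn-D}. No hypothesis on $F$ is used, so (i) holds under $H_0$ and $H_1$ alike.

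For (ii), use $A_j=L_j-F_{0,j}=p^\ast(F_j-F_{0,j})$, which follows from the marginal model. Then $t^\ast A_j=F_j-F_{0,j}$, and
\[
  t^\ast\beta(w)-t^{\ast2}\gamma^\ast(w)
  =t^\ast\bigl[w-w^{(1)}\otimes(F_{0,2}+t^\ast A_2)-(F_{0,1}+t^\ast A_1)\otimes w^{(2)}\bigr]
  =t^\ast\bigl(w-w^{(1)}\otimes F_2-F_1\otimes w^{(2)}\bigr).
\]
This identity also needs no hypothesis. Under $H_0$ we additionally have $D(t^\ast)=0$ by~\eqref{eq:structureH0}. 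Taking $t=t^\ast$ in~\eqref{eq:Dn-D}, multiplying by $\sqrt n$, and using $\sqrt nE_n=\G_n$ gives
\[
  \sqrt n\,D_n(t^\ast)=t^\ast\xi_n-t^{\ast2}n^{-1/2}\,\G_n^{(1)}\otimes\G_n^{(2)},
\]
since $\sqrt n\,E_n^{(1)}\otimes E_n^{(2)}=n^{-1/2}\G_n^{(1)}\otimes\G_n^{(2)}$ and the linear part is $t^\ast\xi_n$ by~\eqref{eq:xi}.

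There is no analytic obstacle. The only point needing care is the bookkeeping: the quadratic remainder must come solely from $C_n$, and (ii) must invoke $D(t^\ast)=0$, which is exactly where $H_0$ enters.
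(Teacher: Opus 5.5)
Your proof is correct and follows the paper's argument step for step. For (i) you use the same coefficient differences $B_n-B^\ast=\beta(E_n)$ and $C_n-C^\ast=\gamma^\ast(E_n)+E_n^{(1)}\otimes E_n^{(2)}$, and for (ii) the same substitution $t^\ast A_j=F_j-F_{0,j}$ together with $D(t^\ast)=0$; your remark that the operator identity in (ii) holds in the model under either hypothesis, with $H_0$ entering only through $D(t^\ast)=0$, is accurate and slightly sharper than the paper's phrasing.
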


\begin{proof}
(i) By~\eqref{eq:poly} and~\eqref{eq:Dn}, $D_n(t)-D(t)=t(B_n-B^\ast)-t^2(C_n-C^\ast)$.
Since $A_n-A=E_n$ and $A_{n,j}-A_j=E_n^{(j)}$, the two coefficient errors are
\[
  B_n-B^\ast=\underbrace{E_n-E_n^{(1)}\otimes F_{0,2}-F_{0,1}\otimes E_n^{(2)}}_{=\,\beta(E_n)},
  \qquad
  C_n-C^\ast=\underbrace{E_n^{(1)}\otimes A_2+A_1\otimes E_n^{(2)}}_{=\,\gamma^\ast(E_n)}
  +E_n^{(1)}\otimes E_n^{(2)},
\]
the second one following from expanding
$(A_1+E_n^{(1)})\otimes(A_2+E_n^{(2)})-A_1\otimes A_2$.

(ii) Under $H_0$, $t^\ast A_j=F_j-F_{0,j}$, so
$t^\ast\beta(w)-t^{\ast2}\gamma^\ast(w)
=t^\ast\bigl(w-w^{(1)}\otimes(F_{0,2}+t^\ast A_2)-(F_{0,1}+t^\ast A_1)\otimes w^{(2)}\bigr)$,
which is the claim. Multiplying~\eqref{eq:Dn-D} by $\sqrt n$ at $t=t^\ast$, where
$D(t^\ast)=0$, and taking $w=\G_n$ makes the BKR process of the unobserved
component appear:
\[
  \sqrt n\,D_n(t^\ast)=\underbrace{t^\ast\beta(\G_n)-t^{\ast2}\gamma^\ast(\G_n)}_{=\,t^\ast\xi_n}
  -\;t^{\ast2}n^{-1/2}\G_n^{(1)}\otimes\G_n^{(2)} . \qedhere
\]
\end{proof}

\begin{lemma}[Perturbation of projected quadratic forms]\label{lem:proj}
Let $\Pi_\psi$ denote the $L^2(G)$-projection onto $\operatorname{span}\{\psi\}$.
For $\psi,\phi\in L^2(G)\setminus\{0\}$,
$\lVert\Pi_\psi-\Pi_\phi\rVert_{\mathrm{op}}\le4\nG{\psi-\phi}/\nG\phi$.
For orthogonal projections $P,P'$ and $z,z'\in L^2(G)$,
\[
  \bigl|\nG{Pz}^2-\nG{P'z'}^2\bigr|\le(\nG z+\nG{z'})\bigl(\lVert P-P'\rVert_{\mathrm{op}}\nG z+\nG{z-z'}\bigr).
\]
\end{lemma}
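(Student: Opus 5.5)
The first claim is a standard estimate for rank-one projections, and the second follows from a telescoping decomposition. We handle them in that order.

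\emph{Projection bound.} Write $e_\psi=\psi/\nG\psi$ and $e_\phi=\phi/\nG\phi$, so that $\Pi_\psi f=\ip f{e_\psi}e_\psi$. Replacing $\psi$ by $-\psi$ leaves $\Pi_\psi$ unchanged and does not increase $\nG{\psi-\phi}$ when $\ip\psi\phi<0$. We may therefore assume $\ip\psi\phi\ge0$, though the argument below does not even need it. For any $f$,
\[
  \Pi_\psi f-\Pi_\phi f=\ip f{e_\psi-e_\phi}e_\psi+\ip f{e_\phi}(e_\psi-e_\phi),
\]
which gives $\lVert\Pi_\psi-\Pi_\phi\rVert_{\mathrm{op}}\le2\nG{e_\psi-e_\phi}$. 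It remains to bound the distance between the unit vectors:
\[
  \nG{e_\psi-e_\phi}\le\frac{\nG{\psi-\phi}}{\nG\phi}+\nG\psi\Bigl|\frac1{\nG\psi}-\frac1{\nG\phi}\Bigr|
  =\frac{\nG{\psi-\phi}+|\nG\phi-\nG\psi|}{\nG\phi}\le\frac{2\nG{\psi-\phi}}{\nG\phi}.
\]
The first inequality comes from inserting $\psi/\nG\phi$ between $e_\psi$ and $e_\phi$; the last uses the reverse triangle inequality. Combining the two displays yields the constant $4$.

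\emph{Quadratic forms.} We use $\nG{Pz}^2-\nG{P'z'}^2=(\nG{Pz}-\nG{P'z'})(\nG{Pz}+\nG{P'z'})$. Orthogonal projections have operator norm at most $1$, so the second factor is at most $\nG z+\nG{z'}$. For the first factor, the reverse triangle inequality and the splitting $Pz-P'z'=(P-P')z+P'(z-z')$ give
\[
  \bigl|\nG{Pz}-\nG{P'z'}\bigr|\le\nG{Pz-P'z'}\le\lVert P-P'\rVert_{\mathrm{op}}\nG z+\nG{z-z'}.
\]
Multiplying the two bounds gives the claim.

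The main point to watch is the normalization step. One must divide by $\nG\phi$, not $\nG\psi$, so that the bound holds as stated for arbitrary nonzero $\psi$. The insertion of $\psi/\nG\phi$ is chosen precisely so that only $\nG\phi$ appears in the denominator. No other step should present any difficulty.
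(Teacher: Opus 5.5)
Your proposal is correct and follows essentially the same argument as the paper. You bound the rank-one projections by $2\nG{e_\psi-e_\phi}$, insert $\psi/\nG\phi$ to get $\nG{e_\psi-e_\phi}\le2\nG{\psi-\phi}/\nG\phi$, and factor the difference of squares, splitting $Pz-P'z'=(P-P')z+P'(z-z')$; the sign normalisation of $\psi$ you mention is harmless but unnecessary, as you note yourself.
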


\begin{proof}
For unit vectors $e,f$, $\lVert e\langle e,\cdot\rangle-f\langle f,\cdot\rangle\rVert_{\mathrm{op}}\le2\nG{e-f}$, and
$\nG{\psi/\nG\psi-\phi/\nG\phi}\le\nG{\psi-\phi}/\nG\phi+|\nG\phi-\nG\psi|/\nG\phi\le2\nG{\psi-\phi}/\nG\phi$.
For the second claim, $|\nG{Pz}-\nG{P'z'}|\le\nG{(P-P')z}+\nG{P'(z-z')}$ and
$\nG{Pz}+\nG{P'z'}\le\nG z+\nG{z'}$.
\end{proof}

\subsection{Proof of Lemma~\ref{lem:algebra}}

(i) By~\eqref{eq:inverted-explicit},
\[
  p^2F_{1\otimes2}(\cdot\,;p)=(p-p^\ast)^2F_{0,1}\otimes F_{0,2}
  +p^\ast(p-p^\ast)\bigl(F_{0,1}\otimes F_2+F_1\otimes F_{0,2}\bigr)+p^{\ast2}F_1\otimes F_2,
\]
while $p^2F(\cdot\,;p)=p(p-p^\ast)F_0+pp^\ast F$. Subtracting gives~\eqref{eq:expansion}.

(iii) By~\eqref{eq:family1}--\eqref{eq:family2}, $pF(\cdot\,;p)=A+pF_0$ and
$pF_j(\cdot\,;p)=A_j+pF_{0,j}$. Hence
\[
\begin{aligned}
  p^2D(\cdot\,;L,p)&=p(A+pF_0)-(A_1+pF_{0,1})\otimes(A_2+pF_{0,2})\\
  &=p^2\underbrace{\bigl(F_0-F_{0,1}\otimes F_{0,2}\bigr)}_{=\,K_0}
   +p\,\underbrace{\bigl(A-A_1\otimes F_{0,2}-F_{0,1}\otimes A_2\bigr)}_{=\,B^\ast}
   -\underbrace{A_1\otimes A_2}_{=\,C^\ast},
\end{aligned}
\]
which is~\eqref{eq:poly} after division by $p^2$. Next, \eqref{eq:model} and
its marginal version give $A=p^\ast(F-F_0)$ and $A_j=p^\ast(F_j-F_{0,j})$. Hence
$C^\ast=A_1\otimes A_2=p^{\ast2}\Delta$ and
\[
  B^\ast=p^\ast\bigl[F-F_0-(F_1-F_{0,1})\otimes F_{0,2}-F_{0,1}\otimes(F_2-F_{0,2})\bigr]
   =p^\ast\,(K+\Delta-K_0),
\]
as is checked by expanding $K$, $\Delta$ and $K_0$. This is~\eqref{eq:BC}.

(ii) We derive~\eqref{eq:structure} from~\eqref{eq:expansion} in three
steps. Write $a=p-p^\ast$, so that~\eqref{eq:expansion} reads
\[
  p^2D=pp^\ast F-p^{\ast2}F_1\otimes F_2
  +a\bigl[pF_0-p^\ast\bigl(F_{0,1}\otimes F_2+F_1\otimes F_{0,2}\bigr)\bigr]
  -a^2F_{0,1}\otimes F_{0,2}.
\]
\emph{Step 1: extract $K$ from the term in $F$.} Since $F=K+F_1\otimes F_2$,
$pp^\ast F=pp^\ast K+pp^\ast F_1\otimes F_2$, and the coefficient of
$F_1\otimes F_2$ becomes $pp^\ast-p^{\ast2}=p^\ast(p-p^\ast)=p^\ast a$, recalling that $a=p-p^\ast$:
\[
  p^2D=pp^\ast K+p^\ast a\,F_1\otimes F_2+pa\,F_0
  -p^\ast a\bigl(F_{0,1}\otimes F_2+F_1\otimes F_{0,2}\bigr)-a^2F_{0,1}\otimes F_{0,2}.
\]
\emph{Step 2: split the coefficient of $F_{0,1}\otimes F_{0,2}$.} Writing
$-a^2=-a(p-p^\ast)=-pa+p^\ast a$ and assigning $-pa$ to the group of $F_0$
and $+p^\ast a$ to the group of $F_1\otimes F_2$,
\[
  p^2D=pp^\ast K
  +pa\,\underbrace{\bigl[F_0-F_{0,1}\otimes F_{0,2}\bigr]}_{=\,K_0}
  +p^\ast a\,\underbrace{\bigl[F_1\otimes F_2-F_{0,1}\otimes F_2-F_1\otimes F_{0,2}+F_{0,1}\otimes F_{0,2}\bigr]}_{=\,(F_1-F_{0,1})\otimes(F_2-F_{0,2})\,=\,\Delta}.
\]
This is the bridge identity~\eqref{eq:bridge}:
$p^2D=p(p-p^\ast)K_0+p^\ast(p-p^\ast)\Delta+pp^\ast K$.

\emph{Step 3: divide by $p^2$.} With $s=p^\ast/p$, $p(p-p^\ast)/p^2=1-s$,
$p^\ast(p-p^\ast)/p^2=s(1-s)$ and $pp^\ast/p^2=s$, which
gives~\eqref{eq:structure}.

The splitting in Step~2 explains why $K_0$ does not appear as a block
in~\eqref{eq:expansion}: the single coefficient $-(p-p^\ast)^2$ of
$F_{0,1}\otimes F_{0,2}$ is the sum of the contribution $-p(p-p^\ast)$ of
$K_0$ and the contribution $+p^\ast(p-p^\ast)$ of $\Delta$.

Alternatively, substituting~\eqref{eq:BC} into~\eqref{eq:poly} with
$s=p^\ast t$ gives $D=K_0+s(K+\Delta-K_0)-s^2\Delta$, which is~\eqref{eq:structure}
again.

Under $H_0$, $K=0$ and $D=(1-s)(K_0+s\Delta)$. Writing $1-s=-p^\ast(t-t^\ast)$ and
$K_0+s\Delta=(K_0+\Delta)+p^\ast(t-t^\ast)\Delta$ gives the last expression
in~\eqref{eq:structureH0}. \qed

\subsection{Proof of Proposition~\ref{prop:ident}}

(i) Expanding $\Delta$,
\[
  K_0+s\Delta=s\,F_1\otimes F_2+F_0-s\bigl(F_{0,1}\otimes F_2+F_1\otimes F_{0,2}\bigr)+(s-1)\,F_{0,1}\otimes F_{0,2}.
\]
The coefficient of $F_0$ equals $1$, so under~(I) the function $K_0+s\Delta$
does not vanish $G$-a.e., for any $s\in\R$. The map
$s\mapsto\nG{K_0+s\Delta}$ is continuous and positive on the compact set $S$,
hence bounded below by a positive constant, which is~\ref{A:ident}.

(ii) By~\eqref{eq:structureH0},
$\nG{D(\cdot\,;L,p)}=|1-s|\,\nG{K_0+s\Delta}\ge p^\ast|t-t^\ast|\,c_G$
for $s=p^\ast t\in S$.

(iii) $d(p)=0$ iff $F(\cdot\,;p)=F_{1\otimes2}(\cdot\,;p)$ $G$-a.e. Since $d$ is
continuous (it is a polynomial in $1/p$) and $\Theta$ is compact,
$d_{\min}>0$ iff $d(p)>0$ for all $p\in\Theta$. Under~(II), the coefficient
of $F$ in~\eqref{eq:expansion} is $pp^\ast\neq0$, so $D(\cdot\,;L,p)$ does
not vanish $G$-a.e.\ for any $p\in\Theta$, i.e.\ $d(p)>0$ for all $p$.
Finally, if $d(p)=0$ for some $p$ then, by~\eqref{eq:structure},
$sK=-(1-s)K_0-s(1-s)\Delta$ in $L^2(G)$ with $s>0$, i.e.\
$K\in\operatorname{span}\{K_0,\Delta\}$. \qed

\subsection{Proof of Lemma~\ref{lem:contrast}}

(i) The derivatives follow from~\eqref{eq:poly}, since $K_0,B^\ast,C^\ast$ do not
depend on $p$. Because $|K_0|\le1$, $|B^\ast|\le3$ and $|C^\ast|\le1$ (see
\eqref{eq:Dbound}), for $p\in\Theta$ we have
$\nI{D(\cdot\,;L,p)}\le1+3/p+1/p^2\le5\theta_l^{-2}$ and
$\nI{\dot D(\cdot\,;L,p)}\le3/p^2+2/p^3\le5\theta_l^{-3}$. Hence, by
dominated differentiation,
$|\dot d(p)|=2|\ip{D}{\dot D}|\le50\,\theta_l^{-5}$ on $\Theta$, and the mean
value theorem gives the Lipschitz bound.

(ii) By~\eqref{eq:structureH0}, $D(\cdot\,;L,p^\ast)=0$ as a function, not only
in $L^2(G)$. Hence $d(p^\ast)=0$ and
$\dot d(p^\ast)=2\ip{D(\cdot\,;L,p^\ast)}{\dot D(\cdot\,;L,p^\ast)}=0$. By dominated
differentiation, $\ddot d=2\int(\dot D^2+D\ddot D)\,dG$, and the second term
vanishes at $p^\ast$ for the same reason. Hence $\ddot d(p^\ast)$ is the squared
norm $2\nG{\dot D(\cdot\,;L,p^\ast)}^2$. To compute it, insert the
coefficients~\eqref{eq:BC} of Lemma~\ref{lem:algebra}(iii),
$B^\ast=p^\ast(K+\Delta-K_0)$ and $C^\ast=p^{\ast2}\Delta$, into the formula of
part~(i). This is valid under $H_0$ or $H_1$ and gives, at $p=p^\ast$,
\begin{equation}\label{eq:Ddot-general}
  \dot D(\cdot\,;L,p^\ast)=-\frac{B^\ast}{p^{\ast2}}+\frac{2C^\ast}{p^{\ast3}}
  =-\frac{K+\Delta-K_0}{p^\ast}+\frac{2\Delta}{p^\ast}
  =\frac{K_0+\Delta-K}{p^\ast}.
\end{equation}
The dependence function $K$ of the unknown component thus enters
$\dot D(\cdot\,;L,p^\ast)$ with coefficient $-1/p^\ast$. Under $H_0$, $K=0$,
and~\eqref{eq:Ddot-general} reduces to
\[
  \dot D(\cdot\,;L,p^\ast)=\frac{K_0+\Delta}{p^\ast}.
\]
Multiplying by $p^\ast$ gives the second equality in~\eqref{eq:Ddot-link}.
The first one is the definition $\Dd=-p^\ast(K_0+\Delta)$ in~\eqref{eq:structureH0}.
It is consistent with the interpretation of $\Dd$ as the derivative of
$t\mapsto D(\cdot\,;L,1/t)$ at $t^\ast$. Indeed,
\eqref{eq:structureH0} gives
$\partial_tD(\cdot\,;L,1/t)|_{t=t^\ast}=\Dd$. Also, for every $p$, the chain rule with
$p=1/t$ and $dp/dt=-p^2$ gives
\begin{equation}\label{eq:chain}
  \partial_t\,D(\cdot\,;L,1/t)=-p^2\,\dot D(\cdot\,;L,p),
\end{equation}
so that $\Dd=-p^{\ast2}\dot D(\cdot\,;L,p^\ast)$, which is again~\eqref{eq:Ddot-link}.
This gives the equalities in~\eqref{eq:ddot}. The squared norm is bounded
away from zero because $\nG{K_0+\Delta}\ge\inf_{s\in S}\nG{K_0+s\Delta}=c_G>0$,
using $1\in S$ under~\ref{A:theta} and~\ref{A:ident}. If instead $K_0=-\Delta$,
then $\dot D(\cdot\,;L,p^\ast)=0$ and $\ddot d(p^\ast)=0$.

(iii) Since $p^c$ is an interior minimiser of the smooth function $d$,
$\dot d(p^c)=0$ and $\ddot d(p^c)\ge0$, with $\ddot d$ given by the formula of
part~(ii). This is~\eqref{eq:ddotc}. Condition~(a) follows from the
Cauchy--Schwarz inequality,
$|\ip{D}{\ddot D}|\le\nG{D}\,\nG{\ddot D}=\sqrt{d_{\min}}\,\nG{\ddot D}$ at $p^c$.
For~(b), set $\delta(t)=d(1/t)$, a quartic polynomial in $t$ with leading
coefficient $\nG{C^\ast}^2$ by~\eqref{eq:poly}, and let $t^c=1/p^c$. Since
$\dot d(p)=-t^2\delta'(t)$, we have $\ddot d(p)=t^4\delta''(t)+2t^3\delta'(t)$, so at the
critical point $\ddot d(p^c)=(t^c)^4\delta''(t^c)$. If $\delta''(t^c)=0$, the Taylor
expansion of the quartic at its local minimum $t^c$ reads
$\delta(t)=d_{\min}+a_3(t-t^c)^3+a_4(t-t^c)^4$ with $a_4=\nG{C^\ast}^2$, and a local
minimum forces $a_3=0$. Conversely, this form gives $\delta''(t^c)=0$. \qed

\subsection{Proof of Theorem~\ref{thm:consistency}}

Recall from Appendix~\ref{app:prelim} that $D_n(t)=D(\cdot\,;\Ln,1/t)=K_0+tB_n-t^2C_n$
and $D(t)=D(\cdot\,;L,1/t)=K_0+tB^\ast-t^2C^\ast$ denote the empirical and
population discrepancies, seen as functions of $t=1/p$.

\emph{Uniform bounds.} Since $p\mapsto1/p$ maps $\Theta$ onto $T$ and
$D(\cdot\,;\Ln,p)=D_n(1/p)$, the suprema over $p\in\Theta$
in~\eqref{eq:unifrate} equal the corresponding suprema over $t\in T$. By
Lemma~\ref{lem:decomp} and $\nI{E_n^{(1)}\otimes E_n^{(2)}}\le\nI{E_n}$,
\begin{equation}\label{eq:Dn-D-sup}
  \sup_{t\in T}\nI{D_n(t)-D(t)}\le(3t_u+3t_u^2)\nI{E_n}\le6t_u^2\,\frac{\kappa_n}{\sqrt n}.
\end{equation}
By~\eqref{eq:Dbound},
$|\delta_n(t)-\delta(t)|=|\ip{D_n(t)-D(t)}{D_n(t)+D(t)}|\le10t_u^2\nI{D_n(t)-D(t)}$, so
\begin{equation}\label{eq:dn-d-sup}
  \sup_{t\in T}|\delta_n(t)-\delta(t)|\le60\,t_u^4\,\frac{\kappa_n}{\sqrt n}.
\end{equation}
Both bounds are deterministic multiples of $\kappa_n/\sqrt n=O_{a.s.}(\sqrt{\log\log n/n})$,
which proves~\eqref{eq:unifrate}.

(i) By definition $\that$ minimises $\delta_n(t)=\nG{D_n(t)}^2$ over $T$. Since
$x\mapsto\sqrt x$ is increasing, it equivalently minimises the norm $\nG{D_n(t)}$.
Using this and $D(t^\ast)=0$,
\[
  \nG{D(\that)}\le\nG{D_n(\that)}+\nI{D_n(\that)-D(\that)}
  \le\nG{D_n(t^\ast)}+6t_u^2\frac{\kappa_n}{\sqrt n}\le12t_u^2\frac{\kappa_n}{\sqrt n}.
\]
The identification bound~\eqref{eq:lowerbound} converts this into a bound on
the parameter:
\begin{equation}\label{eq:hhat-bound}
  |\widehat h_n|=\sqrt n\,|\that-t^\ast|\le\frac{12\,t_u^2}{c_G\,p^\ast}\,\kappa_n .
\end{equation}
Finally $|\ph-p^\ast|=\ph p^\ast|\that-t^\ast|\le|\that-t^\ast|$.

(ii) Let $u_n=\sup_T|\delta_n-\delta|$. Both minima are attained, since $\delta_n$
and $\delta$ are continuous on the compact set $T$. For every $t\in T$,
$\delta_n(t)\ge \delta(t)-u_n\ge\min_T\delta-u_n$, so
$\min_T\delta_n\ge\min_T\delta-u_n$. Conversely, if $t^c$ minimises $\delta$, then
$\min_T\delta_n\le \delta_n(t^c)\le \delta(t^c)+u_n=\min_T\delta+u_n$. Hence
\[
  \bigl|\min_T\delta_n-\min_T\delta\bigr|\le\sup_T|\delta_n-\delta|\to0\quad\text{a.s.},
\]
so
$\delta_n(\that)\to d_{\min}$. Moreover, since $\delta_n(\that)=\min_T\delta_n\ge\min_T\delta-u_n=d_{\min}-u_n$,
the bound~\eqref{eq:dn-d-sup} gives the explicit lower bound
\[
  \mathsf T_n=n\,\delta_n(\that)\ge n\,d_{\min}-60\,t_u^4\sqrt n\,\kappa_n ,
\]
which tends to $+\infty$
a.s.\ when $d_{\min}>0$. Assume now that $t^c=1/p^c$ is the unique minimiser of $\delta$ on $T$. We show
that $\that\to t^c$ on the event
\[
  \Omega_0=\Bigl\{\lim_{n\to\infty}u_n=0\Bigr\},
\]
which has probability one by~\eqref{eq:dn-d-sup}, since
$\kappa_n/\sqrt n=O_{a.s.}(\sqrt{\log\log n/n})$. The argument is deterministic once
$\omega\in\Omega_0$ is fixed, and applies to any choice of minimiser
$\that\in\argmin_T\delta_n$.

\emph{Separation.} Fix $\varepsilon>0$ and let $T_\varepsilon=\{t\in T:|t-t^c|\ge\varepsilon\}$.
If $T_\varepsilon=\emptyset$, there is nothing to prove. Otherwise $T_\varepsilon$ is compact,
as a closed subset of the compact interval $T$, and $\delta$ is continuous, being a
polynomial in $t$. So $\delta$ attains its minimum on $T_\varepsilon$ at some
$t_\varepsilon\neq t^c$, and uniqueness of the minimiser gives
\[
  \eta_\varepsilon:=\min_{t\in T_\varepsilon}\delta(t)-\delta(t^c)=\delta(t_\varepsilon)-\delta(t^c)>0 .
\]

\emph{Near-minimality of $\that$ for $\delta$.} For every $n$, using successively
$|\delta-\delta_n|\le u_n$ on $T$, the minimality of $\that$ for $\delta_n$, and again
$|\delta_n-\delta|\le u_n$,
\begin{equation}\label{eq:near-min}
  \delta(\that)\le \delta_n(\that)+u_n\le \delta_n(t^c)+u_n\le \delta(t^c)+2u_n .
\end{equation}

\emph{Conclusion.} Let $\omega\in\Omega_0$. There exists $N=N(\omega,\varepsilon)$
such that $2u_n(\omega)<\eta_\varepsilon$ for all $n\ge N$. For such $n$, if
$\that(\omega)\in T_\varepsilon$, then $\delta(\that)\ge \delta(t^c)+\eta_\varepsilon>\delta(t^c)+2u_n$,
which contradicts~\eqref{eq:near-min}. Hence $|\that(\omega)-t^c|<\varepsilon$ for all
$n\ge N$. Since $\varepsilon>0$ is arbitrary and $\Omega_0$ does not depend on
$\varepsilon$, $\that\to t^c$ on $\Omega_0$, i.e.\ almost surely. Finally,
$p\mapsto1/p$ is continuous, so $\ph=1/\that\to1/t^c=p^c$ almost surely. \qed

\subsection{Proof of Lemma~\ref{lem:linear}}\label{app:linear}

(i) The map $(\bx,\bv)\mapsto\1\{\bx\le\bv\}$ is Borel on $\Rb^2\times\Rb^2$, so
$f_\tau$ is Borel by Fubini's theorem, and $|f_\tau(\bx)|\le|\mu_\tau|(\Rb^2)\le m_\mu$.
Since $\G_n(\bv)=n^{-1/2}\sum_i(\1\{\bm X_i\le\bv\}-L(\bv))$ is a finite sum,
integrating term by term gives
$\int\G_n\,d\mu_\tau=n^{-1/2}\sum_i\bigl(f_\tau(\bm X_i)-\int L\,d\mu_\tau\bigr)$.
Moreover $\int L\,d\mu_\tau=\int\EE\1\{\bm X\le\bv\}\,d\mu_\tau(\bv)=\EE f_\tau(\bm X)$,
by Fubini's theorem for the bounded integrand and the finite measure
$\PP\otimes|\mu_\tau|$.

(ii) Let $\mathcal B(\Rb^2)$ be the closed subspace of $\ell^\infty(\Rb^2)$
consisting of bounded Borel functions. It contains the paths of $\G_n$, and
those of a version of $\G$, which can be chosen with $\rho_L$-continuous,
hence Borel, paths \citep[Sec.~2.1]{vdVW1996}. On $\mathcal B(\Rb^2)$ the map
$\mathcal M$ is linear with $\sup_\tau|\mathcal M(w)(\tau)|\le m_\mu\nI w$, hence
continuous into $\ell^\infty(\mathcal T)$. Since $\G_n\dto\G$ in
$\ell^\infty(\Rb^2)$, the continuous mapping theorem, in the version for maps
defined on a subset containing the ranges of $\G_n$ and $\G$
\citep[Thm.~1.3.6]{vdVW1996}, gives $\mathcal M(\G_n)\dto\mathcal M(\G)$.

(iii) Fix $\tau_1,\dots,\tau_k\in\mathcal T$. By~(i), the vector
$(\mathcal M(\G_n)(\tau_j))_{j\le k}$ is a normalised sum of i.i.d.\ centred
vectors $(f_{\tau_j}(\bm X_i)-\EE f_{\tau_j}(\bm X))_{j\le k}$, bounded by $2m_\mu$,
whose covariance matrix is $\Sigma_{jl}=\operatorname{Cov}(f_{\tau_j}(\bm X),f_{\tau_l}(\bm X))$
for every $n$. By the multivariate central limit theorem it converges in law to
$\mathcal N_k(0,\Sigma)$. By~(ii) and the continuity of the coordinate
projections of $\ell^\infty(\mathcal T)$, it also converges in law to
$(\mathcal M(\G)(\tau_j))_{j\le k}$. By uniqueness of the limit, the
finite-dimensional laws of $\mathcal M(\G)$ are $\mathcal N_k(0,\Sigma)$, which
proves the first equality in~\eqref{eq:lin-cov}. For the second, write
$f_\tau(\bm X)f_{\tau'}(\bm X)=\iint\1\{\bm X\le\bm a\}\1\{\bm X\le\bm b\}\,d\mu_\tau(\bm a)\,d\mu_{\tau'}(\bm b)$
and use $\1\{\bm X\le\bm a\}\1\{\bm X\le\bm b\}=\1\{\bm X\le\bm a\wedge\bm b\}$.
Taking expectations with Fubini's theorem gives
$\EE[f_\tau(\bm X)f_{\tau'}(\bm X)]=\iint L(\bm a\wedge\bm b)\,d\mu_\tau\,d\mu_{\tau'}$, and
$\EE f_\tau(\bm X)\,\EE f_{\tau'}(\bm X)=\iint L(\bm a)L(\bm b)\,d\mu_\tau\,d\mu_{\tau'}$
by~(i). \qed

\subsection{Proof of Theorem~\ref{thm:unified}}\label{app:unified}

Primes denote derivatives in $t$, $\delta_n(t)=d_n(1/t)=\nG{D_n(t)}^2$, and
throughout $\widehat h_n=\sqrt n(\that-t^c)$ and $\ell_n=\ell_c(\G_n)$.

\paragraph{Strategy: the classical Taylor expansion.} The whole argument
rests on one requirement: \emph{$\that$ must be an interior point of $T$}.
Indeed, $\that$ is defined as a minimiser of $\delta_n$ over the compact
interval $T=[t_l,t_u]$, and such a minimiser satisfies the first-order
condition
\begin{equation}\label{eq:foc}
  \delta_n'(\that)=0
\end{equation}
only when $\that\in(t_l,t_u)$. If $\that$ falls on an endpoint, then in
general $\delta_n'(\that)\neq0$: the contrast is still decreasing (or
increasing) when it reaches the boundary, and~\eqref{eq:foc}, together with
everything derived from it below, is false. Interiority is not automatic: it
holds only with probability tending to one, and is established in Step~1
below. Assuming it for now, a first-order Taylor expansion of $\delta_n'$
around $t^c$ (mean value theorem) gives, for some $\tilde t_n$ between $t^c$ and $\that$,
\[
  \underbrace{0=\delta_n'(\that)}_{\text{requires $\that$ interior}}
  =\delta_n'(t^c)+\delta_n''(\tilde t_n)\,(\that-t^c).
\]
Multiplying by $\sqrt n$ and solving for $\widehat h_n$,
\begin{equation}\label{eq:taylor-main}
  \widehat h_n=-\frac{S_n}{J_n},\qquad
  S_n:=\sqrt n\,\delta_n'(t^c)\ \ \text{(score)},\qquad
  J_n:=\delta_n''(\tilde t_n)\ \ \text{(curvature)}.
\end{equation}
The \emph{score} $S_n$ is the rescaled slope of the empirical contrast at the
pseudo-true value. Its population counterpart vanishes, $\delta'(t^c)=0$
by~\eqref{eq:orth}, so $S_n$ is pure sampling noise, of order one after
rescaling by $\sqrt n$. The \emph{curvature} $J_n$ measures how sharply the
contrast bends around its minimum; its population counterpart is
$J^c=\delta''(t^c)$.

\paragraph{What has to be proved.} Given~\eqref{eq:taylor-main}, the expansion
of the weight follows from three facts:
\begin{enumerate}[label=(P\arabic*),leftmargin=*]
\item \emph{Localisation:} with probability tending to one, $\that$ is an
  \emph{interior point} of $T$, so that~\eqref{eq:foc}
  and hence~\eqref{eq:taylor-main} are valid, and $\tilde t_n$ is within
  $O_P(n^{-1/2})$ of $t^c$, so that the curvature is evaluated where
  $\delta_n''$ is controlled;
\item \emph{Score:} $S_n=2\ell_n+O_P(n^{-1/2})$, a continuous linear functional of
  $\G_n$ up to a negligible term, which is the source of the Gaussian limit;
\item \emph{Curvature:} $J_n=J^c+O_P(n^{-1/2})$, with $J^c>0$. This is where
  identifiability~\ref{A:ident} (case~(a)) or the non-degeneracy
  $\ddot d(p^c)>0$ (case~(b)) enters.
\end{enumerate}
Then $\widehat h_n=-2\ell_n/J^c+O_P(n^{-1/2})$, which gives~\eqref{eq:unif-p}.
Because $D_n$ is a quadratic polynomial in $t$, the discrepancy at $\that$ is
then obtained \emph{exactly} from its value and slope at $t^c$, which
gives~\eqref{eq:unif-D}. Steps~1--3 below establish (P1)--(P3), Step~4 the
weight, Step~5 the discrepancy and Step~6 the joint limit.

\paragraph{Technical tools.} Since $D_n(t)=K_0+tB_n-t^2C_n$,
\begin{equation}\label{eq:dn-derivs}
\begin{gathered}
  D_n'(t)=B_n-2tC_n,\qquad D_n''(t)=-2C_n,\qquad D_n'''(t)=0,\\
  \delta_n'=2\ip{D_n}{D_n'},\qquad \delta_n''=2\nG{D_n'}^2+2\ip{D_n}{D_n''},
\end{gathered}
\end{equation}
where $D_n'''\equiv0$ because $D_n(t)$ is a polynomial of degree two in $t$.
Differentiating $\delta_n''$ once more,
\begin{equation}\label{eq:dn-third}
  \delta_n'''(t)
  =\underbrace{4\ip{D_n'(t)}{D_n''(t)}}_{\text{from }2\nG{D_n'}^2}
  +\underbrace{2\ip{D_n'(t)}{D_n''(t)}+2\ip{D_n(t)}{D_n'''(t)}}_{\text{from }2\ip{D_n}{D_n''}}
  =6\ip{D_n'(t)}{D_n''(t)},
\end{equation}
the last term vanishing since $D_n'''=0$. Inserting~\eqref{eq:dn-derivs} gives
the literal expression
\[
  \delta_n'''(t)=6\ip{B_n-2tC_n}{-2C_n}=-12\ip{B_n}{C_n}+24\,t\,\nG{C_n}^2 ,
\]
in agreement with $\delta_n(t)=\sum_{k=0}^4a_{n,k}t^k$ in~\eqref{eq:dn}, since
$6a_{n,3}=-12\ip{B_n}{C_n}$ and $24a_{n,4}=24\nG{C_n}^2$. On $T$,
$\nI{D_n'}\le3+2t_u\le5t_u$ and $\nI{D_n''}\le2$, hence
$|\delta_n'''|\le6\cdot5t_u\cdot2=60\,t_u$; the same bound holds for $\delta'''$.
Finally, by Lemma~\ref{lem:decomp}, for every $t\in T$,
\begin{equation}\label{eq:Zt}
\begin{aligned}
  \sqrt n\bigl(D_n(t)-D(t)\bigr)&=\mathcal L_t(\G_n)-\frac{t^2}{\sqrt n}\,\G_n^{(1)}\otimes\G_n^{(2)},
  &\qquad&\mathcal L_t(w)=t\beta(w)-t^2\gamma^\ast(w),\\
  \sqrt n\bigl(D_n'(t)-D'(t)\bigr)&=\mathcal L_t'(\G_n)-\frac{2t}{\sqrt n}\,\G_n^{(1)}\otimes\G_n^{(2)},
  &\qquad&\mathcal L_t'(w)=\beta(w)-2t\gamma^\ast(w),\\
  \sqrt n\bigl(D_n''-D''\bigr)&=-2\gamma^\ast(\G_n)-\frac{2}{\sqrt n}\,\G_n^{(1)}\otimes\G_n^{(2)},
\end{aligned}
\end{equation}
the second and third lines following from the first by differentiation in $t$;
$\mathcal L_{t^c}=\mathcal L_c$ and $\mathcal L_{t^c}'=\mathcal L_c'$. Since
$\nI{\beta(w)}\le3\nI w$ and $\nI{\gamma^\ast(w)}\le2\nI w$, the three
right-hand sides are bounded in sup-norm by $9t_u^2\kappa_n(1+\kappa_n)$,
uniformly in $t\in T$. Hence $\sup_T\nI{D_n^{(k)}-D^{(k)}}=O_P(n^{-1/2})$ for
$k=0,1,2$, and, with~\eqref{eq:Dbound}, $\sup_T|\delta_n''-\delta''|=O_P(n^{-1/2})$.

\emph{Step 1: localisation (P1).} \emph{Case~(a).} Here $t^c=t^\ast$. Let
$K=12\,t_u^2/(c_Gp^{\ast2})$ and $M_n=K(1+\kappa_n)$. The bound~\eqref{eq:hhat-bound}
of Theorem~\ref{thm:consistency}(i) gives $|\widehat h_n|\le M_n$, i.e.
\begin{equation}\label{eq:window}
  |\that-t^\ast|\le\frac{M_n}{\sqrt n}\qquad\text{for every sample}.
\end{equation}
We stress that~\eqref{eq:window} does \emph{not} rely on the first-order
condition: it was derived by comparing \emph{values} of the contrast (the
minimality $\nG{D_n(\that)}\le\nG{D_n(t^\ast)}$) together with the
identification bound~\eqref{eq:lowerbound}, and therefore holds whether $\that$
is interior or not. There is thus no circularity in using it to prove
interiority. Recall that $r_T=\min(t^\ast-t_l,\,t_u-t^\ast)>0$ is the
distance from $t^\ast$ to the boundary of $T$ (Assumption~\ref{A:theta}), and define the event
\begin{equation}\label{eq:Omega-int}
  \Omega_n^{\mathrm{int}}:=\bigl\{M_n<r_T\sqrt n\bigr\}.
\end{equation}
On $\Omega_n^{\mathrm{int}}$, \eqref{eq:window} gives
$|\that-t^\ast|\le M_n/\sqrt n<r_T$: the window around $t^\ast$ that traps
$\that$ is narrower than the distance to either endpoint, so
$t_l<\that<t_u$, i.e.\ $\that$ is an interior point of $T$ and~\eqref{eq:foc}
holds. Moreover $|\tilde t_n-t^\ast|\le|\that-t^\ast|\le M_n/\sqrt n$. Since $M_n=O_P(1)$
while $r_T\sqrt n\to\infty$, $\PP(\Omega_n^{\mathrm{int}})\to1$. In finite
samples, however, the complement may carry non-negligible probability when
$r_T$ is small relative to the window width $M_n/\sqrt n$, for instance when
$p^\ast$ and $c_G$ are small (so that $K$ is large); the boundary estimates $\ph=1$ ($\that=t_l$)
observed in Scenario~B with $p^\ast=0.3$, $n=250$ (Section~\ref{sec:simu}) are an
instance of this, and explain the size distortion reported there.

\emph{Case~(b).} Theorem~\ref{thm:consistency}(ii) gives $\that\to t^c$ almost
surely. Since $t^c$ is interior, $\that$ is interior for all $n$ large enough,
almost surely, and~\eqref{eq:foc} holds. The rate $|\that-t^c|=O_P(n^{-1/2})$ is
obtained in Step~3.

\emph{Step 2: the score (P2).} Since $\delta'(t^c)=0$ and $ab-cd=(a-c)b+c(b-d)$,
\[
  S_n=\sqrt n\bigl(\delta_n'-\delta'\bigr)(t^c)
  =2\bigl\langle\underbrace{\sqrt n(D_n-D)(t^c)}_{\mathcal L_c(\G_n)+O_P(n^{-1/2})},\,
   \underbrace{D_n'(t^c)}_{D'_c+O_P(n^{-1/2})}\bigr\rangle_G
  +2\bigl\langle D^c,\,\underbrace{\sqrt n(D_n'-D')(t^c)}_{\mathcal L_c'(\G_n)+O_P(n^{-1/2})}\bigr\rangle_G ,
\]
by~\eqref{eq:Zt}. Hence $S_n=2\ell_n+O_P(n^{-1/2})$ and, in particular,
$S_n=O_P(1)$. Under $H_0$ the second term vanishes because $D^c=0$; under
$H_1$ it accounts for the change of the tangent direction with the data.

\emph{Step 3: the curvature (P3).} \emph{Positivity.} In case~(a),
$J^c=2\nG\Dd^2=2p^{\ast2}\nG\Psi^2\ge2p^{\ast2}c_G^2>0$. In case~(b),
differentiating $\delta(t)=d(1/t)$ twice gives
$\delta''(t)=p^4\ddot d(p)+2p^3\dot d(p)$ with $p=1/t$, so
$J^c=p^{c4}\ddot d(p^c)>0$ because $\dot d(p^c)=0$.
\emph{Approximation.} For any $\bar t_n$ between $t^c$ and $\that$,
by the technical tools and the Lipschitz property of $\delta''$,
\begin{equation}\label{eq:curv}
  \bigl|\delta_n''(\bar t_n)-J^c\bigr|
  \le\underbrace{\sup_T|\delta_n''-\delta''|}_{\text{noise}}
  +\underbrace{60t_u|\that-t^c|}_{\text{drift within the window}}
  =O_P(n^{-1/2})+60t_u|\that-t^c| .
\end{equation}
In case~(a), $|\that-t^c|=O_P(n^{-1/2})$ by~\eqref{eq:window}. In case~(b),
$\that\to t^c$ almost surely gives first $J_n\to J^c>0$ almost surely, hence
$\widehat h_n=-S_n/J_n=O_P(1)$ by Step~2, i.e.\ $|\that-t^c|=O_P(n^{-1/2})$. In
both cases $\delta_n''(\bar t_n)=J^c+O_P(n^{-1/2})$; in particular
$J_n=J^c+O_P(n^{-1/2})$. The role of~\eqref{eq:curv} is to keep the
denominator of~\eqref{eq:taylor-main} away from zero: on the event
$\Omega_n^{\mathrm{curv}}=\{|J_n-J^c|\le J^c/2\}$, whose probability tends to
one, the noise uses at most half of the curvature, and $J_n\ge J^c/2>0$.

\emph{Step 4: the weight.} By Steps~1--3, on an event of probability tending to one,
\begin{equation}\label{eq:unif-t}
  \widehat h_n=-\frac{S_n}{J_n}=-\frac{2\ell_n+O_P(n^{-1/2})}{J^c+O_P(n^{-1/2})}
  =-\frac{2\ell_n}{J^c}+O_P(n^{-1/2}).
\end{equation}
The inversion $p=1/t$ is handled exactly:
$\ph-p^c=-(\that-t^c)/(\that\,t^c)$, and $t^c/\that-1=O_P(n^{-1/2})$, so
$\sqrt n(\ph-p^c)=-p^{c2}\widehat h_n+O_P(n^{-1/2})$, which is~\eqref{eq:unif-p}.

\emph{Step 5: the discrepancy.} Since $D_n$ is a quadratic polynomial in $t$
with $D_n''=-2C_n$, the exact Taylor formula
$D_n(\that)-D_n(t^c)=(\that-t^c)D_n'(t^c)-(\that-t^c)^2C_n$ holds. Hence, with
$D(\cdot\,;\Ln,\ph)=D_n(\that)$ and $D(\cdot\,;L,p^c)=D(t^c)=D^c$,
\[
\begin{aligned}
  \mathbb W_n&=\underbrace{\sqrt n\bigl(D_n(\that)-D_n(t^c)\bigr)}_{\text{re-fitting the weight}}
  +\underbrace{\sqrt n\bigl(D_n(t^c)-D(t^c)\bigr)}_{\text{weight fixed}}\\
  &=\widehat h_n\,D_n'(t^c)-\frac{\widehat h_n^2}{\sqrt n}\,C_n
  +\mathcal L_c(\G_n)-\frac{t^{c2}}{\sqrt n}\,\G_n^{(1)}\otimes\G_n^{(2)},
\end{aligned}
\]
by~\eqref{eq:Zt}. Moreover $\nI{D_n'(t^c)-D'_c}=O_P(n^{-1/2})$,
$\nI{C_n}\le1$, $\widehat h_n=O_P(1)$ and $\nI{\G_n^{(1)}\otimes\G_n^{(2)}}\le\kappa_n^2$.
With~\eqref{eq:unif-t},
\[
  \mathbb W_n=\mathcal L_c(\G_n)-\frac{2\ell_c(\G_n)}{J^c}\,D'_c+O_P(n^{-1/2})
  =\Phi_c(\G_n)+O_P(n^{-1/2})\quad\text{in }\ell^\infty(\Rb^2),
\]
which is~\eqref{eq:unif-D}.

\emph{Step 6: joint convergence.} The map
$w\mapsto\bigl(2p^{c2}\ell_c(w)/J^c,\ \Phi_c(w)\bigr)$ is linear and continuous
from $\ell^\infty(\Rb^2)$ to $\R\times\ell^\infty(\Rb^2)$, because
$\nI{\mathcal L_c(w)}\le5t_u^2\nI w$, $\nI{\mathcal L_c'(w)}\le7t_u\nI w$ and
$\nG\cdot\le\nI\cdot$. Since $\G_n\dto\G$ in $\ell^\infty(\Rb^2)$ (the class of
lower-left quadrants is $L$-Donsker), the continuous mapping theorem and
Slutsky's lemma, applied to~\eqref{eq:unif-p}--\eqref{eq:unif-D}, give~\eqref{eq:joint-clt}.
The limit is a continuous linear image of the Gaussian process $\G$, hence
centred Gaussian, and its covariance follows from the i.i.d.\
representations~\eqref{eq:iid-rep}. \qed

\subsection{Proofs of Corollaries~\ref{cor:repr}--\ref{thm:alt}}

\emph{Corollary~\ref{cor:repr}.} By definition,
$\mathsf T_n=n\,d_n(\ph)=\nG{\sqrt n\,D(\cdot\,;\Ln,\ph)}^2$, and
by the definition of $\mathbb W_n$, $\sqrt n\,D(\cdot\,;\Ln,\ph)=\sqrt n\,D^c+\mathbb W_n$.
Expanding the square, with $\nG{D^c}^2=d_{\min}$, gives~\eqref{eq:repr}. Finally,
\[
  \ip{D^c}{\Phi_c(w)}=\ip{D^c}{\mathcal L_c(w)}-\frac{2\ell_c(w)}{J^c}\,\ip{D^c}{D'_c},
\]
and the last inner product vanishes by~\eqref{eq:orth}. Finally, by~\eqref{eq:unif-D}
and $\nG\cdot\le\nI\cdot$,
$\bigl|\nG{\mathbb W_n}^2-\nG{\Phi_c(\G_n)}^2\bigr|\le\nI{\mathbb W_n-\Phi_c(\G_n)}\bigl(\nI{\mathbb W_n}+\nI{\Phi_c(\G_n)}\bigr)=O_P(n^{-1/2})$.
Since $z\mapsto\nG{\Phi_c(z)}^2$ is continuous on $\ell^\infty(\Rb^2)$ and
$\G_n\dto\G$, the continuous mapping theorem and Slutsky's lemma
give $\nG{\mathbb W_n}^2\dto\nG{\Phi_c(\G)}^2$. The limit $\Phi_c(\G)$ is the centred Gaussian process of
Lemma~\ref{lem:linear}, with covariance $\operatorname{Cov}(\phi_{c,\bu}(\bm X),\phi_{c,\bv}(\bm X))$.
Taking a measurable version, Tonelli's theorem gives
$\EE\,\mathsf T^c_\infty=\int\EE[\Phi_c(\G)(\bu)^2]\,dG(\bu)=\int\operatorname{Var}\phi_{c,\bu}(\bm X)\,dG(\bu)$. \qed

\emph{Corollary~\ref{thm:null}.} Under $H_0$, $D^c=0$, $D'_c=\Dd=-p^\ast\Psi$ and
$J^c=2\nG\Dd^2$. By Lemma~\ref{lem:decomp}(ii), $\mathcal L_c(w)=t^\ast(w-w^{(1)}\otimes F_2-F_1\otimes w^{(2)})$,
so $\mathcal L_c(\G_n)=t^\ast\xi_n$. Since $D^c=0$, $\ell_c(w)=\ip{\mathcal L_c(w)}{\Dd}$ and
\[
  \frac{2\ell_c(w)}{J^c}\,\Dd=\frac{\ip{\mathcal L_c(w)}{\Dd}}{\nG\Dd^2}\,\Dd=\Pi\mathcal L_c(w),
  \qquad\text{i.e.}\qquad \Phi_c(w)=(I-\Pi)\mathcal L_c(w).
\]
By~\eqref{eq:unif-p}, using $t^\ast p^\ast=1$,
$\sqrt n(\ph-p^\ast)=\frac{2p^{\ast2}}{2p^{\ast2}\nG\Psi^2}\,t^\ast\ip{\xi_n}{-p^\ast\Psi}+O_P(n^{-1/2})$,
which is~\eqref{eq:phat-lin}. By~\eqref{eq:repr} with $D^c=0$, $\mathsf T_n=\nG{\mathbb W_n}^2$, and by~\eqref{eq:unif-D}
and $\nG\cdot\le\nI\cdot$,
\[
  \bigl|\mathsf T_n-\nG{\Phi_c(\G_n)}^2\bigr|
  \le\nI{\mathbb W_n-\Phi_c(\G_n)}\bigl(\nI{\mathbb W_n}+\nI{\Phi_c(\G_n)}\bigr)=O_P(n^{-1/2}),
\]
with $\nG{\Phi_c(\G_n)}^2=t^{\ast2}\nG{(I-\Pi)\xi_n}^2$. This is~\eqref{eq:T-lin}.

\emph{Limits.} By Lemma~\ref{lem:linear} applied as after~\eqref{eq:xi},
$\xi_n\dto\xi$ in $\ell^\infty(\Rb^2)$, with covariance~\eqref{eq:Gamma}. The maps
$z\mapsto\ip z\Psi$ and $z\mapsto\nG{(I-\Pi)z}^2$ are continuous on
$\mathcal B(\R^2)$, by~\eqref{eq:L2-linf}.
This subspace contains the paths of $\xi_n$, and, since $\G$ admits a version with
$\rho_L$-continuous (hence Borel) paths \citep[Sec.~2.1]{vdVW1996}, also those of
$\xi$. The continuous mapping
theorem and Slutsky's lemma, applied to~\eqref{eq:phat-lin}--\eqref{eq:T-lin}, give
the joint convergence~\eqref{eq:limits}. The variance of the Gaussian limit
$\ip\xi\Psi$ is $\iint\Gamma(\bu,\bv)\Psi(\bu)\Psi(\bv)dG(\bu)dG(\bv)$ by
Fubini's theorem and~\eqref{eq:Gamma}, or directly by Lemma~\ref{lem:linear}(iii)
with $\mu=\int\mu_\bv\Psi(\bv)\,dG(\bv)$.
\emph{Kernel.} Under $H_0$,
$\phi_{c,\bu}(\bx)=\Phi_c(\mathsf e_\bx)(\bu)=t^\ast\bigl[(I-\Pi)\varphi_\cdot(\bx)\bigr](\bu)$,
because $\mathcal L_c(\mathsf e_\bx)=t^\ast\varphi_\cdot(\bx)$ by~\eqref{eq:phi}.
The map $I-\Pi$ is linear and acts on the variable $\bu$, so it commutes
with $\EE$ by Fubini's theorem. Hence
$\bar\phi_{c,\bu}(\bx)=t^\ast[(I-\Pi)\bar\varphi_\cdot(\bx)](\bu)$ and
$\bar h_c=\bar h_0$ in~\eqref{eq:null-kernel}. Since
$\int\bar h_0(\bx,\bx)\,dL(\bx)=\int\operatorname{Var}\phi_{c,\bu}(\bm X)\,dG(\bu)$ by Fubini's
theorem, the mean is that of Corollary~\ref{cor:repr}.

\emph{Continuity of the cdf.} The process $Y=\Phi_c(\G)=t^\ast(I-\Pi)\xi$ is
centred Gaussian, and $\EE\nG Y^2<\infty$, so it is a centred Gaussian random
element of $L^2(G)$. Its covariance operator is positive, self-adjoint and
trace class, with kernel $\operatorname{Cov}(\phi_{c,\bu}(\bm X),\phi_{c,\bv}(\bm X))$.
Its nonzero eigenvalues $\mu_k$ coincide with those of the integral operator
with kernel $\bar h_0$ on $L^2(L)$. The Karhunen--Lo\`eve expansion gives
$\mathsf T_\infty=\nG Y^2=\sum_k\mu_k Z_k^2$ with $Z_k$ i.i.d.\ $\mathcal N(0,1)$,
the weighted series of Remark~\ref{rem:Vstat}. Under~\ref{A:nondeg},
$\sum_k\mu_k=\EE\nG Y^2=p^{\ast-2}\EE\nG{(I-\Pi)\xi}^2>0$, so $\mu_1>0$. The cdf of
$\mu_1\chi^2_1$ plus an independent nonnegative variable is continuous and
strictly increasing on $[0,\infty)$. \qed

\emph{Corollary~\ref{thm:alt}.} The identity
$\ip{D^c}{\Phi_c(w)}=\ip{D^c}{\mathcal L_c(w)}$ and the
decomposition~\eqref{eq:alt-decomp} are Corollary~\ref{cor:repr}. By~\eqref{eq:unif-D},
$\mathbb W_n=\Phi_c(\G_n)+O_P(n^{-1/2})$ in sup-norm, so $\nG{\mathbb W_n}^2=O_P(1)$ and
\[
  2\sqrt n\,\ip{D^c}{\mathbb W_n}=2\sqrt n\,\ip{D^c}{\Phi_c(\G_n)}+O_P(1)
  =2\sqrt n\,\ip{D^c}{\mathcal L_c(\G_n)}+O_P(1).
\]
Linearity and Fubini's theorem give
\[
  \sqrt n\,\ip{D^c}{\mathcal L_c(\G_n)}=\sum_{i=1}^n\bigl(\chi_c(\bm X_i)-\EE\chi_c(\bm X)\bigr),
\]
which proves~\eqref{eq:alt-d}. Next, \eqref{eq:alt-p} is~\eqref{eq:unif-p}
combined with~\eqref{eq:iid-rep}. Dividing~\eqref{eq:alt-d} by $\sqrt n$ and
using $\mathsf T_n=n\,d_n(\ph)$, both coordinates of~\eqref{eq:alt-limits} are,
up to $O_P(n^{-1/2})$, normalised sums of the i.i.d.\ bounded vectors
$\bigl(2p^{c2}\lambda_c(\bm X_i)/J^c,\ 2\chi_c(\bm X_i)\bigr)$. The bivariate central limit
theorem and Slutsky's lemma give the stated Gaussian limit, with the
moments~\eqref{eq:infl-alt}. \qed

\subsection{Proof of Theorem~\ref{thm:bootstrap}}

The proof compares the bootstrap noise $\mathbb W_n^\zeta=\widehat\Phi_n(\G_n^\zeta)$
with an \emph{oracle} copy $\widetilde{\mathbb W}_n^\zeta=\Phi_c(\widetilde\G_n^\zeta)$, built
from the true map $\Phi_c$, i.e.\ from the true $(t^\ast,F_1,F_2,\Psi)$, and
bounds the difference of their squared norms explicitly.

\emph{Key quantities.} Let $S_\zeta=n^{-1/2}\sum_{i=1}^n\zeta_i$,
\[
  \widetilde\G_n^\zeta(\bu)=\frac1{\sqrt n}\sum_{i=1}^n\zeta_i\bigl(\1\{\bm X_i\le\bu\}-L(\bu)\bigr),\qquad
  \widetilde\xi_n^\zeta=\widetilde\G_n^\zeta-\widetilde\G_n^{\zeta(1)}\otimes F_2-F_1\otimes\widetilde\G_n^{\zeta(2)},
\]
and the oracle statistic $\widetilde{\mathsf T}_n^\zeta=\nG{\widetilde{\mathbb W}_n^\zeta}^2=t^{\ast2}\nG{(I-\Pi)\widetilde\xi_n^\zeta}^2$.

\emph{Reduction to the null form under $H_0$.} Under $H_0$,
$\nI{\widehat D_n}\le\nI{D_n(t^\ast)}+5t_u|\that-t^\ast|=O_P(n^{-1/2})$ by
Lemma~\ref{lem:decomp}(ii) and~\eqref{eq:hhat-bound}, hence
$\widehat J_n-2\nG{\widehat D{}'_n}^2=-4\ip{\widehat D_n}{C_n}=O_P(n^{-1/2})$, and
$\nG{\widehat D{}'_n}\ge p^\ast c_G/2$ with probability tending to one
by~\eqref{eq:dir-rem} below. Since $\nI{\widehat{\mathcal L}_n'(w)}\le7t_u\nI w$, the
definition of $\widehat\Phi_n$ gives
$\lVert\widehat\Phi_n-(I-\widehat\Pi)\widehat{\mathcal L}_n\rVert_{\mathrm{op}}=O_P(n^{-1/2})$ as an
operator from $\ell^\infty(\Rb^2)$ to $L^2(G)$. As $\nI{\G_n^\zeta}=O_P(1)$, the squared norms of
$\widehat\Phi_n(\G_n^\zeta)$ and of its null form $\that(I-\widehat\Pi)\xi_n^\zeta$ differ by
$O_P(n^{-1/2})$. In the rest of part~(i), $\mathsf T_n^\zeta$ therefore denotes
the null form $\that^{\,2}\nG{(I-\widehat\Pi)\xi_n^\zeta}^2$.
Write also $\kappa_n^\zeta=\nI{\widetilde\G_n^\zeta}$.

\emph{Oracle part.} Because the quadrant class is Donsker for every $L$, the
conditional multiplier central limit theorem \citep[Thm~2.9.6]{vdVW1996}
states that $\widetilde\G_n^\zeta\dto\G$ conditionally on the data, in
probability, in the bounded Lipschitz sense. The multiplier inequalities
\citep[Lemma~2.9.1]{vdVW1996} give $\kappa_n^\zeta=O_P(1)$. By the
continuous mapping theorem (as in the proof of Corollary~\ref{thm:null}),
$\widetilde{\mathsf T}_n^\zeta\dto\mathsf T_\infty$ conditionally in probability.

\emph{Plug-in errors.} Since
$\1\{\bm X_i\le\bu\}-\Ln(\bu)=(\1\{\bm X_i\le\bu\}-L(\bu))-E_n(\bu)$, we have exactly
$\G_n^\zeta=\widetilde\G_n^\zeta-S_\zeta E_n$, and $\widehat\varphi_\bu$ is linear in the
three indicators, so
\begin{equation}\label{eq:boot-decomp}
  \xi_n^\zeta=\G^\zeta_n-\G_n^{\zeta(1)}\otimes\widehat F_2-\widehat F_1\otimes\G_n^{\zeta(2)} .
\end{equation}
The plug-in marginals satisfy, exactly,
$\widehat F_j-F_j=(\that-t^\ast)A_{n,j}+t^\ast E_n^{(j)}$ under $H_0$, hence
$\nI{\widehat F_j-F_j}\le|\that-t^\ast|+t_u\nI{E_n}$. Subtracting the oracle
process term by term,
\begin{equation}\label{eq:boot-rem}
  \nI{\xi_n^\zeta-\widetilde\xi_n^\zeta}\le3|S_\zeta|\,\nI{E_n}
  +2\bigl(\kappa_n^\zeta+|S_\zeta|\nI{E_n}\bigr)\bigl(|\that-t^\ast|+t_u\nI{E_n}\bigr)=:\rho_n^\zeta .
\end{equation}
Since $S_\zeta=O_P(1)$, $\kappa_n^\zeta=O_P(1)$, $\nI{E_n}=\kappa_n/\sqrt n$ and
$|\that-t^\ast|=O_P(n^{-1/2})$ by~\eqref{eq:hhat-bound}, $\rho_n^\zeta=O_P(n^{-1/2})$.
The estimated direction satisfies, by Lemma~\ref{lem:decomp} and
$\partial_tD(t)|_{t=t^\ast}=B^\ast-2t^\ast C^\ast=\Dd$,
\begin{equation}\label{eq:dir-rem}
  \nI{\widehat{D}{}'_n-\Dd}\le\nI{B_n-B^\ast}+2t_u\nI{C_n-C^\ast}+2|\that-t^\ast|
  \le(3+6t_u)\nI{E_n}+2|\that-t^\ast|=O_P(n^{-1/2}).
\end{equation}

\emph{Comparison.} Apply Lemma~\ref{lem:proj} with $P=I-\widehat\Pi$, $P'=I-\Pi$,
$z=\xi_n^\zeta$, $z'=\widetilde\xi_n^\zeta$. Since
$\nG\Dd\ge p^\ast c_G$, \eqref{eq:dir-rem} gives
$\lVert\widehat\Pi-\Pi\rVert_{\mathrm{op}}\le4\nI{\widehat{D}{}'_n-\Dd}/(p^\ast c_G)$. Using also
$|\that^2-t^{\ast2}|\le2t_u|\that-t^\ast|$, $\nG{\widetilde\xi_n^\zeta}\le3\kappa_n^\zeta$
and $\nG{\xi_n^\zeta}\le3\kappa_n^\zeta+\rho_n^\zeta$, we get
\[
  \bigl|\mathsf T_n^\zeta-\widetilde{\mathsf T}_n^\zeta\bigr|
  \le t_u^2(6\kappa_n^\zeta+\rho_n^\zeta)\Bigl(\frac{4\nI{\widehat{D}{}'_n-\Dd}}{p^\ast c_G}(3\kappa_n^\zeta+\rho_n^\zeta)+\rho_n^\zeta\Bigr)
  +2t_u|\that-t^\ast|(3\kappa_n^\zeta+\rho_n^\zeta)^2 ,
\]
which is $O_P(n^{-1/2})$ by~\eqref{eq:boot-rem}--\eqref{eq:dir-rem}.

(i) An unconditional $o_P(1)$ bound transfers to the conditional law: for
every $\varepsilon>0$,
$\EE\bigl[\PP(|\mathsf T_n^\zeta-\widetilde{\mathsf T}_n^\zeta|>\varepsilon\mid\bm X_{1:n})\bigr]
=\PP(|\mathsf T_n^\zeta-\widetilde{\mathsf T}_n^\zeta|>\varepsilon)\to0$, so the conditional
probability tends to $0$ in probability. Hence
$\mathsf T_n^\zeta\dto\mathsf T_\infty$ conditionally in probability. Since the
cdf of $\mathsf T_\infty$ is continuous (Corollary~\ref{thm:null}), P\'olya's
theorem yields the uniform convergence of conditional cdfs. Since this cdf is
also strictly increasing, $\widehat q_{1-\alpha}\to q_{1-\alpha}$ in
probability. Slutsky's lemma and Corollary~\ref{thm:null} then give
$\PP(\mathsf T_n>\widehat q_{1-\alpha})\to\PP(\mathsf T_\infty>q_{1-\alpha})=\alpha$.

(ii) \emph{Convergence to the reference law.} In case~(b), $\that\to t^c$ almost
surely (Theorem~\ref{thm:consistency}(ii)), and, by~\eqref{eq:unifrate} and the
uniform continuity of $t\mapsto(D(t),D'(t))$ on $T$, $\widehat D_n\to D^c$,
$\widehat D{}'_n\to D'_c$ in sup-norm and $\widehat J_n\to J^c>0$ almost surely.
Moreover $A_{n,j}\to A_j$ uniformly, so $\widehat\gamma_n\to\gamma^\ast$,
$\widehat{\mathcal L}_n\to\mathcal L_c$ and $\widehat{\mathcal L}_n'\to\mathcal L_c'$ in operator
norm. Hence $\lVert\widehat\Phi_n-\Phi_c\rVert_{\mathrm{op}}\to0$ almost surely. Since
$\G_n^\zeta=\widetilde\G_n^\zeta-S_\zeta E_n$ with $S_\zeta E_n=O_P(n^{-1/2})$,
\[
\begin{aligned}
  \bigl|\nG{\widehat\Phi_n(\G_n^\zeta)}^2-\nG{\Phi_c(\widetilde\G_n^\zeta)}^2\bigr|
  &\le\bigl(\lVert\widehat\Phi_n-\Phi_c\rVert_{\mathrm{op}}\nI{\G_n^\zeta}+\lVert\Phi_c\rVert_{\mathrm{op}}|S_\zeta|\nI{E_n}\bigr)\\
  &\quad\times\bigl(\nG{\widehat\Phi_n(\G_n^\zeta)}+\nG{\Phi_c(\widetilde\G_n^\zeta)}\bigr)=o_P(1).
\end{aligned}
\]
By the conditional multiplier central limit theorem and the continuous
mapping theorem,
\[
  \nG{\Phi_c(\widetilde\G_n^\zeta)}^2\dto\nG{\Phi_c(\G)}^2=\mathsf T^c_\infty
\]
conditionally in probability, and the $o_P(1)$ bound transfers to the
conditional law as in part~(i). In particular $\mathsf T_n^\zeta=O_P(1)$.

\emph{Bounded quantile and power.} For the null form, under any $L$,
$\nI{\widehat F_j}\le t_u+1$ and $\that\le t_u$, so by~\eqref{eq:boot-decomp},
$\that^{\,2}\nG{(I-\widehat\Pi)\xi_n^\zeta}^2\le t_u^2\nI{\xi_n^\zeta}^2\le t_u^2(2t_u+3)^2\nI{\G^\zeta_n}^2=O_P(1)$
unconditionally. In both cases, then, $\mathsf T_n^\zeta=O_P(1)$. By Markov's inequality applied conditionally,
$\PP(\widehat q_{1-\alpha}>x)\le\PP\bigl(\PP(\mathsf T_n^\zeta>x\mid\bm X_{1:n})\ge\alpha\bigr)
\le\alpha^{-1}\PP(\mathsf T_n^\zeta>x)$, so $\widehat q_{1-\alpha}=O_P(1)$,
whereas $\mathsf T_n\to\infty$ a.s.\ by Theorem~\ref{thm:consistency}(ii). \qed

\subsection{Proof of Proposition~\ref{prop:empiricalG}}

Write $\lVert\cdot\rVert_n$ and $\langle\cdot,\cdot\rangle_n$ for the norm and
inner product of $L^2(\Ln)$, and $\lVert\cdot\rVert_L$ for those of $L^2(L)$. All
sup-norm bounds of the previous proofs are unchanged, since they do not
involve $G$.

\emph{Consistency and rate.} The chain of inequalities in the proof of
Theorem~\ref{thm:consistency}(i) holds verbatim with $\lVert\cdot\rVert_n$,
giving $\lVert D(\that)\rVert_n\le12t_u^2\nI{E_n}$. The lower bound becomes
$\lVert D(t)\rVert_n\ge c_np^\ast|t-t^\ast|$ with
$c_n=\inf_{s\in S}\lVert K_0+s\Delta\rVert_n$. Now $\lVert K_0+s\Delta\rVert_n^2$ is a
quadratic polynomial in $s$ whose three coefficients $\int K_0^2d\Ln$,
$\int K_0\Delta\,d\Ln$, $\int\Delta^2d\Ln$ converge a.s.\ to their $L$-counterparts
by the strong law of large numbers. Hence $c_n\to c_L>0$ a.s., and the rate
of Theorem~\ref{thm:consistency}(i) follows. Part (ii) is identical, since
$\sup_T|\delta_n-\delta|\to0$ a.s.\ also holds when $G=\Ln$ in both $\delta_n$ and $\delta$, and
$\int D^2(\cdot\,;1/t)\,d\Ln\to\int D^2(\cdot\,;1/t)\,dL$ uniformly in $t$
(again a polynomial in $t$ with a.s.\ convergent coefficients).

\emph{Null distribution.} The proof of Theorem~\ref{thm:unified} in case~(a)
uses the integrating measure only through $\lVert\cdot\rVert_n\le\nI\cdot$, through
the curvature $J^c=2\lVert\Dd\rVert_n^2$, and through the constant $c_G$ in the
window~\eqref{eq:window}, which is replaced by the random constant $c_n$
above, with $c_n\to c_L>0$ a.s. On the event $\{c_n\ge c_L/2\}$, whose
probability tends to one, $J^c\ge p^{\ast2}c_L^2/2$ and all steps hold
verbatim with $c_G$ replaced by $c_L/2$; the sup-norm expansion~\eqref{eq:unif-D}
does not involve $G$ except through $\Pi$. Hence, as in the proof of
Corollary~\ref{thm:null},
$\mathsf T_n=t^{\ast2}\bigl(\lVert\xi_n\rVert_n^2-\langle\xi_n,\Psi\rangle_n^2/\lVert\Psi\rVert_n^2\bigr)+O_P(n^{-1/2})$,
and similarly for $\ph$, with a remainder $O_P(n^{-1/2})$. It remains to show that
$(\lVert\xi_n\rVert_n^2,\langle\xi_n,\Psi\rangle_n)\dto(\lVert\xi\rVert_L^2,\langle\xi,\Psi\rangle_L)$.
Since $L$ and $F_0$ are continuous, so are $L_j$, $F_j$, $\Psi$, and
$\rho_L(\bu,\bv)^2:=\operatorname{Var}(\1\{\bm X\le\bu\}-\1\{\bm X\le\bv\})\le
|L_1(u_1)-L_1(v_1)|+|L_2(u_2)-L_2(v_2)|$. The Brownian bridge $\G$ has a
version with uniformly $\rho_L$-continuous paths \citep[Sec.~2.1]{vdVW1996},
hence with paths continuous on $\R^2$ for the Euclidean topology. Thus $\xi$
takes its values in the set $C_b(\R^2)$ of bounded continuous functions. Let
$\mathcal P$ be the set of probability measures on $\R^2$ with the weak
topology, and consider the map
$\Phi:\ell^\infty(\Rb^2)\times\mathcal P\to\R^2$,
$\Phi(z,\mu)=(\int z^2d\mu,\int z\Psi d\mu)$. For $z_n\to z\in C_b(\R^2)$ in
sup-norm and $\mu_n\Rightarrow\mu$,
\[
  \Bigl|\int z_n^2d\mu_n-\int z^2d\mu\Bigr|\le\nI{z_n^2-z^2}+\Bigl|\int z^2d(\mu_n-\mu)\Bigr|\to0,
\]
and similarly for the second coordinate, so $\Phi$ is continuous at every
point of $C_b(\R^2)\times\{L\}$. Since $\Ln\Rightarrow L$ almost surely (a
deterministic limit), $(\xi_n,\Ln)\dto(\xi,L)$, and the extended continuous
mapping theorem \citep[Thm~1.11.1]{vdVW1996} gives the claim.

\emph{Bootstrap.} The same argument applies conditionally to $\xi_n^\zeta$,
using the conditional convergence $\G_n^\zeta\dto\G$ and the a.s.\ weak
convergence of $\Ln$; part (ii) is unchanged since
$\lVert\cdot\rVert_n\le\nI\cdot$. \qed

\section{An alternative absorbed by a wrong weight}\label{app:example}

This appendix constructs dependent components $F$ that the IBM contrast cannot
distinguish from the null hypothesis, characterises all of them, and
illustrates the consequence for the test.

\paragraph{Characterisation.} By~\eqref{eq:inverted-explicit},
$pF(\cdot\,;p)=(p-p^\ast)F_0+p^\ast F$. Writing $\lambda=p/p^\ast$ and
$H=F(\cdot\,;p)$, this is equivalent to
\begin{equation}\label{eq:absorbed}
  F=\lambda H+(1-\lambda)F_0 .
\end{equation}
Since $d(p)=0$ if and only if $H=F(\cdot\,;p)$ is of product form ($G$-a.e.),
we obtain the following statement.

\begin{proposition}\label{prop:absorbed}
Let $\Lambda=\{p/p^\ast:\ p\in\Theta\}$. Then $d_{\min}=0$ if and only if
$F=\lambda\,H_1\otimes H_2+(1-\lambda)F_0$ $G$-a.e.\ for some $\lambda\in\Lambda$
and some functions $H_1,H_2$ on $\Rb$, normalised so that $H_1(+\infty)=H_2(+\infty)=1$ (they are then the marginals of $H_1\otimes H_2$). In that case, $\lambda p^\ast$ is a zero of $d$,
\begin{equation}\label{eq:absorbedK}
  K=(1-\lambda)\,K_0+\frac{1-\lambda}{\lambda}\,\Delta ,
\end{equation}
and, for every $p\in(0,1]$, with $s'=\lambda p^\ast/p$ and
$\Delta_H=(H_1-F_{0,1})\otimes(H_2-F_{0,2})$,
\begin{equation}\label{eq:absorbedD}
  D(\cdot\,;L,p)=(1-s')\bigl(K_0+s'\Delta_H\bigr).
\end{equation}
Conversely, if $K_0$ and $\Delta$ are linearly independent in $L^2(G)$
(Definition~\ref{def:linind}) and $K=aK_0+b\Delta$, then $d$ vanishes on
$\Theta$ if and only if $(1-a)(1+b)=1$ and $p^\ast/(1+b)\in\Theta$, i.e.\ if
and only if~\eqref{eq:absorbedK} holds with $\lambda=1/(1+b)\in\Lambda$.
\end{proposition}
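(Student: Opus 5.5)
The plan is to work with the parametrised inverted cdf and exploit the fact that $H=F(\cdot\,;p)$ and its marginals are exactly the objects compared by $D$. First, since $\Theta$ is compact and $d$ is continuous (a polynomial in $1/p$), $d_{\min}=0$ iff $d(p)=0$ for some $p\in\Theta$. That in turn holds iff $F(\cdot\,;p)=F_1(\cdot\,;p)\otimes F_2(\cdot\,;p)$ $G$-a.e. For the forward direction I take $\lambda=p/p^\ast\in\Lambda$, $H_j=F_j(\cdot\,;p)$, which satisfy $H_j(+\infty)=1$. Identity~\eqref{eq:absorbed} then gives $F=\lambda H_1\otimes H_2+(1-\lambda)F_0$ $G$-a.e. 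For the converse, given such a representation with $\lambda p^\ast=p\in\Theta$, I invert~\eqref{eq:absorbed} to get $F(\cdot\,;p)=H_1\otimes H_2$ $G$-a.e. Taking marginals of the representation (evaluating at $u_2=+\infty$, using $H_2(+\infty)=1$) gives $F_j=\lambda H_j+(1-\lambda)F_{0,j}$, hence $F_j(\cdot\,;p)=H_j$ by~\eqref{eq:inverted-explicit}, and therefore $d(p)=0$.

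One subtlety needs a decision here. The marginal identities hold pointwise only if the representation does, whereas $G$-a.e.\ equality on $\R^2$ need not pass to the marginals. I will interpret the representation with $H_j$ defined as the marginals of $F(\cdot\,;p)$, i.e.\ obtained from the everywhere identity~\eqref{eq:absorbed}, and state the equivalence in that sense.

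Next I obtain~\eqref{eq:absorbedD} and~\eqref{eq:absorbedK}. The key observation is that $L=(1-\lambda p^\ast)F_0+\lambda p^\ast H_1\otimes H_2$, so $L$ is a contamination model with weight $p^{\ast\prime}=\lambda p^\ast$ and product component. Since $D(\cdot\,;L,p)$ depends only on $L$ and $F_0$, Lemma~\ref{lem:algebra}(ii) under $H_0$, applied to this representation, gives~\eqref{eq:absorbedD} with $s'=\lambda p^\ast/p$. In particular $\lambda p^\ast$ is a zero.

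For~\eqref{eq:absorbedK}, I will compare the two observable forms. By~\eqref{eq:BC} applied to both representations, $C^\ast=p^{\ast2}\Delta=\lambda^2p^{\ast2}\Delta_H$, so $\Delta_H=\lambda^{-2}\Delta$. Likewise $B^\ast=p^\ast(K+\Delta-K_0)=\lambda p^\ast(\Delta_H-K_0)$. Solving gives $K=\lambda\Delta_H-\lambda K_0-\Delta+K_0=(1-\lambda)K_0+(\lambda^{-1}-1)\Delta$, which is~\eqref{eq:absorbedK}.

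For the final converse, assume $K=aK_0+b\Delta$ with $K_0,\Delta$ linearly independent in $L^2(G)$. By~\eqref{eq:structure}, $D=(1-s+sa)K_0+s(1-s+b)\Delta$. Linear independence turns $d(p)=0$ into the system $1-s(1-a)=0$ and $s(1+b-s)=0$. Since $s>0$, this gives $s=1+b$ and $(1+b)(1-a)=1$. Here $s=p^\ast/p$, so $p=p^\ast/(1+b)$, and the requirement $p\in\Theta$ is exactly $\lambda=1/(1+b)\in\Lambda$. Substituting $1-a=\lambda$ and $b=\lambda^{-1}-1$ then matches~\eqref{eq:absorbedK}.

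The main obstacle I expect is the a.e.\ versus pointwise issue in the first equivalence, specifically justifying the marginal identity for $H_j$. Everything else is bookkeeping with Lemma~\ref{lem:algebra}.
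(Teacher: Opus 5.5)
Your proposal is correct. Its first equivalence and its final converse (both coefficients in \eqref{eq:structure} vanish, then linear independence and $s>0$) are the paper's argument. The difference lies in how you obtain \eqref{eq:absorbedK} and \eqref{eq:absorbedD}.

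The paper expands $K=\lambda H_1\otimes H_2+(1-\lambda)F_0-F_1\otimes F_2$ directly, using $F_j=\lambda H_j+(1-\lambda)F_{0,j}$, and then substitutes \eqref{eq:absorbedK} into \eqref{eq:structure}. The reading of \eqref{eq:absorbedD} as the null structure of $L=(1-\lambda p^\ast)F_0+\lambda p^\ast H_1\otimes H_2$ appears only as a remark after the proof. You make that remark the proof. Because $D(\cdot\,;L,p)$, $B^\ast$ and $C^\ast$ depend on $(L,F_0)$ alone, Lemma~\ref{lem:algebra} applied to the second representation gives \eqref{eq:absorbedD} at once. Matching \eqref{eq:BC} across the two representations then gives $\Delta=\lambda^2\Delta_H$ and \eqref{eq:absorbedK} without any expansion. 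Your route is shorter and explains why the identities hold; the paper's computation stays within the setting in which Lemma~\ref{lem:algebra} was stated.

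To make your version airtight, say that Lemma~\ref{lem:algebra} is purely algebraic. Then apply it to the everywhere identity $L=(1-p)F_0+pF(\cdot\,;p)$, with $p=\lambda p^\ast$, whose component need not be a cdf and has marginals $F_j(\cdot\,;p)=H_j$. Use the general form \eqref{eq:structure} and discard the term $s'\bigl(F(\cdot\,;p)-H_1\otimes H_2\bigr)$, which vanishes only $G$-a.e. Do the same for the remainder $\lambda\bigl(F(\cdot\,;p)-H_1\otimes H_2\bigr)$ that your $B^\ast$ comparison leaves in $K$.

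Your caveat about a.e.\ equalities is justified. The paper uses $F_j=\lambda H_j+(1-\lambda)F_{0,j}$ without comment, but a $G$-a.e.\ identity on $\R^2$ says nothing at the points $(u_1,+\infty)$ and $(+\infty,u_2)$. For instance, with $G$ a point mass the converse as literally stated fails. Requiring the marginal identity, as you do, is the correct repair.
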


\begin{proof}
The first equivalence is~\eqref{eq:absorbed}, with $H=F(\cdot\,;\lambda p^\ast)$.
For~\eqref{eq:absorbedK}, the marginals of $F$ are
$F_j=\lambda H_j+(1-\lambda)F_{0,j}$, so $F_j-F_{0,j}=\lambda(H_j-F_{0,j})$ and
$\Delta=\lambda^2\Delta_H$. Expanding,
\[
\begin{aligned}
  K&=\lambda H_1\otimes H_2+(1-\lambda)F_0-F_1\otimes F_2\\
   &=(1-\lambda)\bigl[F_0-F_{0,1}\otimes F_{0,2}\bigr]
     +\lambda(1-\lambda)\bigl[H_1\otimes H_2-H_1\otimes F_{0,2}-F_{0,1}\otimes H_2+F_{0,1}\otimes F_{0,2}\bigr]\\
   &=(1-\lambda)K_0+\lambda(1-\lambda)\Delta_H
    =(1-\lambda)K_0+\frac{1-\lambda}{\lambda}\Delta .
\end{aligned}
\]
Substituting~\eqref{eq:absorbedK} into~\eqref{eq:structure}, with $s=p^\ast/p$
and $s'=\lambda s$,
\[
  D=(1-s)K_0+s(1-s)\Delta+s(1-\lambda)K_0+\frac{s(1-\lambda)}{\lambda}\Delta
   =(1-s')K_0+\frac{s}{\lambda}(1-s')\Delta
   =(1-s')\bigl(K_0+s'\Delta_H\bigr),
\]
because $(s/\lambda)\Delta=s\lambda\Delta_H=s'\Delta_H$. This
is~\eqref{eq:absorbedD}. For the converse, \eqref{eq:structure} with
$K=aK_0+b\Delta$ gives $D=\bigl(1-s(1-a)\bigr)K_0+s(1+b-s)\Delta$. By linear
independence, $D=0$ in $L^2(G)$ if and only if both coefficients vanish, i.e.\
$s=1+b$ and $(1+b)(1-a)=1$. The weight is then $p=p^\ast/s=p^\ast/(1+b)$. Setting
$\lambda=1/(1+b)$ gives $a=1-\lambda$ and $b=(1-\lambda)/\lambda$.
\end{proof}

Identity~\eqref{eq:absorbedD} is exactly the null-hypothesis
structure~\eqref{eq:structureH0} for the alternative representation
$L=(1-\lambda p^\ast)F_0+\lambda p^\ast H_1\otimes H_2$. The IBM contrast therefore
sees a null model with weight $\lambda p^\ast$. In particular, only a
one-parameter curve in the two-dimensional space
$\operatorname{span}\{K_0,\Delta\}$ is absorbed. Its tangent at $\lambda=1$ is
$\frac{d}{d\lambda}\bigl[(1-\lambda)K_0+\lambda^{-1}(1-\lambda)\Delta\bigr]_{\lambda=1}=-(K_0+\Delta)=-\Psi$,
which is the undetectable local direction of Remark~\ref{rem:local}.

\paragraph{A concrete example.} Take the known source of
Scenario~A in Section~\ref{sec:simu}: $F_0$ has standard exponential marginals
and a Clayton copula with $\theta_0=2$. Let $H=H_1\otimes H_2$ with
$\mathrm{Gamma}(4,1)$ marginals, and fix weights $0<p'<p^\ast\le1$. Define
\[
  F=\lambda\,H_1\otimes H_2+(1-\lambda)F_0,\qquad \lambda=p'/p^\ast\in(0,1).
\]
Then $F$ is a genuine cdf, a mixture of an independent and a dependent
distribution, and it is not of product form: by~\eqref{eq:absorbedK},
$K=(1-\lambda)K_0+\lambda(1-\lambda)\Delta_H$. Here $K_0\ge0$, with strict
inequality inside the quadrant, because the Clayton copula is positively quadrant
dependent. Also $\Delta_H\ge0$, because $\mathrm{Gamma}(4,1)$ is stochastically
larger than $\mathrm{Exp}(1)$, so that $H_j-F_{0,j}\le0$ for $j=1,2$. Hence
$K\neq0$, and $H_1$ holds. The observed
distribution is nevertheless
\[
  L=(1-p^\ast)F_0+p^\ast F=(1-p')F_0+p'\,H_1\otimes H_2 ,
\]
which is a null model with weight $p'$. The pair $(p^\ast,F)$ is not identifiable
from $L$: part of what was declared ``unknown component'' is in fact
contamination. This is the familiar non-identifiability of contamination
models, which is usually resolved by taking the smallest admissible weight
\citep{BDV2006,PatraSen2016}. Here that rule selects $p'$ and the independent
component $H$.

We simulated this example with $p^\ast=0.6$ and $p'=0.3$ (so $\lambda=1/2$), using
the IBM test exactly as in Section~\ref{sec:simu} ($G=\Ln$, $N=200$ multiplier
replications, nominal level $5\%$). The empirical rejection rates were
$5.0\%$, $5.3\%$ and $4.0\%$ for $n=500$, $1000$ and $2000$ ($300$, $300$
and $150$ replications). The weight estimates had mean $0.301$, $0.302$ and
$0.299$, and standard deviations $0.037$, $0.023$ and $0.018$. As predicted
by~\eqref{eq:absorbedD} and Theorem~\ref{thm:consistency}, $\ph$
converges to the absorbing weight $p'$, not to $p^\ast$, and the test behaves
exactly as under the null hypothesis.

\section*{Acknowledgements}
Pierre Vandekerkhove would like to acknowledge that this work was supported by
the CNES under the BIOSWOT-AdAC project and the MIO Axes Transverses. Pierre
Vandekerkhove would also like to acknowledge the support received from the
Research Chair ACTIONS under the aegis of the Risk Foundation, an initiative by
BNP Paribas Cardif and the French Institute of Actuaries.

\bibliographystyle{plainnat}
\bibliography{refs}

\end{document}

%% file: simulation_section.tex
\subsection{Design}\label{sec:simu-design}

The known source $F_0$ has standard exponential marginals
and either a Clayton copula with parameter $\theta_0=2$ (Kendall's $\tau=0.5$;
Scenario~A) or the independence copula (Scenario~B, in which $K_0=0$). The
unknown component $F$ has $\mathrm{Gamma}(4,1)$ marginals. Its copula is the
independence copula under $H_0$. Under $H_1$ it is a Gaussian copula with
$\rho\in\{0.3,0.5,-0.3\}$ or a Clayton copula with $\theta=1$ ($\tau=1/3$).
In both scenarios $K_0\ge0$ and $\Delta\ge0$, so~\ref{A:ident} holds. We
consider $p^\ast\in\{0.3,0.6\}$ and $n\in\{250,500,1000,2000\}$, with
$\Theta=[0.01,1]$ and $G=\Ln$ (Proposition~\ref{prop:empiricalG}). Critical
values come from $N=300$ bootstrap replications with Gaussian multipliers (Section~\ref{sec:bootstrap}), and the nominal
level is $5\%$. Each entry is based on $1000$ Monte Carlo replications ($500$
for $n=2000$).

We compare the IBM test with two benchmarks. The \emph{naive} test is the
BKR Cram\'er--von Mises test \citep{BKR1961}, calibrated by the same
multiplier bootstrap and applied to the observed sample, i.e.\ $\mathsf T_n$
computed at $p=1$ without projection. The \emph{oracle} test is the same BKR
test applied to the latent subsample drawn from $F$ (of random size close to
$np^\ast$). It is not available in practice and only serves as an upper
reference.

Figure~\ref{fig:component} displays the density of the unknown component under the null hypothesis and under the four alternatives. All five
share the same $\mathrm{Gamma}(4,1)$ marginals and differ only through their
copula. Figure~\ref{fig:mixture} displays the known source $F_0$ and the
resulting observed densities $L$ under $H_0$. In Scenario~B both sources have
independent coordinates, yet the mixture $L$ is clearly not of product form:
its contours bend towards the axes where the exponential source dominates.
This is why tests of independence applied to the observed sample are
misleading.

\begin{figure}[htbp]
\centering
\includegraphics[width=\textwidth]{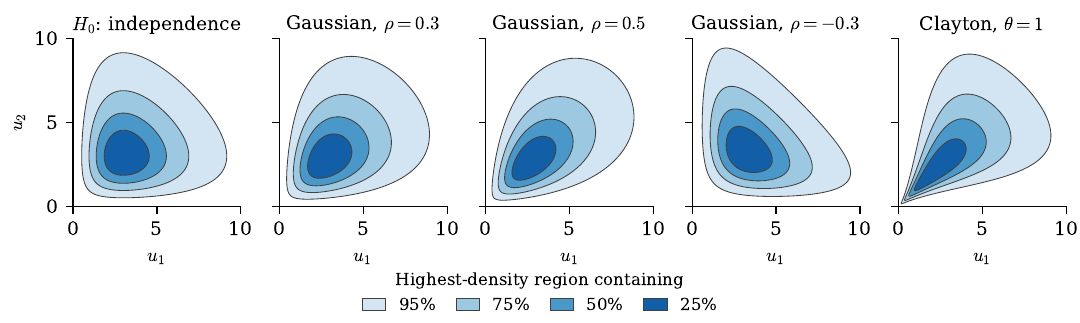}
\caption{Density of the unknown component $F$ in the simulation study:
$\mathrm{Gamma}(4,1)$ marginals with the independence copula ($H_0$, left) and
the four copulas used under $H_1$. Shaded areas are highest-density regions
containing $25\%$, $50\%$, $75\%$ and $95\%$ of the mass.}
\label{fig:component}
\end{figure}

\begin{figure}[htbp]
\centering
\includegraphics[width=0.82\textwidth]{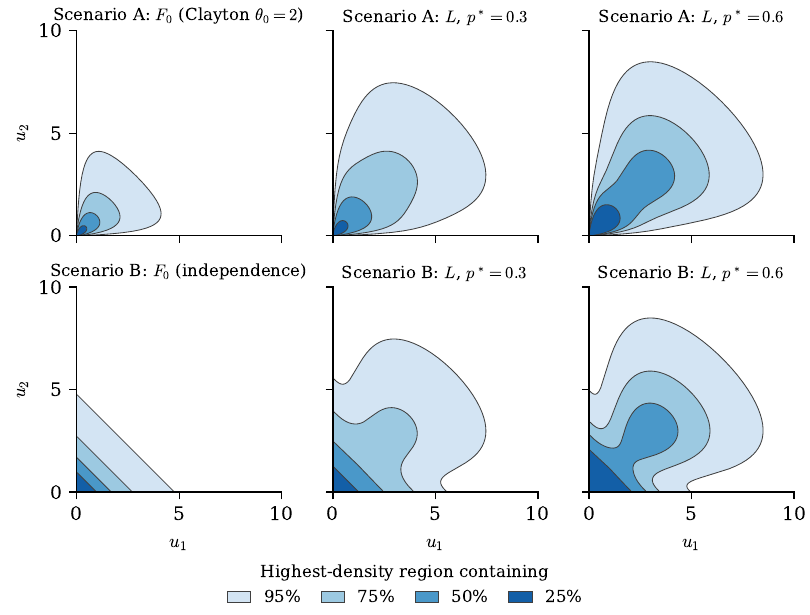}
\caption{Known source $F_0$ (left column, standard exponential marginals) and
observed contamination-model densities $L=(1-p^\ast)F_0+p^\ast F$ under $H_0$, for
$p^\ast=0.3$ (middle) and $p^\ast=0.6$ (right). Top row: Scenario~A
($F_0$ with a Clayton copula, $\theta_0=2$); bottom row: Scenario~B ($F_0$
with independent coordinates). Shaded areas are highest-density regions as in
Figure~\ref{fig:component}.}
\label{fig:mixture}
\end{figure}

\subsection{Sampling the null distribution of the statistic}\label{sec:simu-null}

The test compares $\mathsf T_n$ with the reference distribution of
Section~\ref{sec:bootstrap}, the asymptotic law $\mathsf T_\infty$ of the noise
$\nG{\mathbb W_n}^2$, which under $H_0$ coincides with $\mathsf T_n$
(Corollary~\ref{thm:null}). To visualise this law and its bootstrap estimate,
we consider Scenario~A under $H_0$ with $p^\ast=0.6$ and $n=1000$, and sample it
in three ways.
\begin{enumerate}[label=(\roman*),leftmargin=*]
\item \emph{Monte Carlo.} The experiment is repeated $R=5000$ times; each
  repetition draws a new sample and computes $\mathsf T_n$. The empirical law
  of these values is the finite-sample law of $\mathsf T_n=\nG{\mathbb W_n}^2$
  at $n=1000$. It is not available in practice, since it requires many samples
  from the model.
\item \emph{Multiplier bootstrap.} For a single sample, $N=5000$ replications of
  $\mathsf T_n^\zeta$ in~\eqref{eq:bootstrap} (null form, Gaussian
  multipliers) are drawn. This is what the practitioner computes. We repeat it
  on four independent samples to show its variability.
\item \emph{Limit.} Draws of $\mathsf T_\infty=p^{\ast-2}\nG{(I-\Pi)\xi}^2$ are
  computed with the true $(p^\ast,F_1,F_2)$. The Gaussian process $\xi$ is
  generated by Gaussian multipliers applied to $\varphi_\bu(\bm X_i)$
  in~\eqref{eq:phi}, for a reference sample of size $4000$ from $L$, which also
  serves as integrating measure in place of $L$ (Proposition~\ref{prop:empiricalG}).
  $10{,}000$ draws are used.
\end{enumerate}
Figure~\ref{fig:null} and Table~\ref{tab:nullq} show that the three laws agree
closely. The left panel compares the two approximations of the reference
law available from simulated data through kernel density estimates of their
samples (Gaussian kernel, Silverman's bandwidth): the density of the $5000$
Monte Carlo values of $\mathsf T_n$, and the densities of the $5000$
bootstrap values $\mathsf T_n^\zeta$ obtained from each of the four single
samples. The estimated density of the $\mathsf T_\infty$ draws is added as a
reference. The right panel is a
quantile--quantile plot against $\mathsf T_\infty$. The law is markedly
right-skewed, as expected for the limit of a degenerate $V$-statistic
(Remark~\ref{rem:Vstat}) whose kernel is dominated by a few eigenfunctions. Its mean, which equals the trace
$\int\bar h_0(\bx,\bx)\,dL(\bx)$, is $0.073$, and its $95\%$ quantile is $0.140$. The
Monte Carlo law of $\mathsf T_n$ already matches it at $n=1000$: the rejection
rate of the rule $\mathsf T_n>q_{0.95}(\mathsf T_\infty)$ over the $5000$
repetitions is $5.3\%$. The bootstrap estimates the whole law from one sample.
Two of the four bootstrap laws are indistinguishable from $\mathsf T_\infty$;
the other two are slightly compressed, with $95\%$ quantiles $0.115$ and
$0.128$. This sample-to-sample variability comes mainly from the factor
$\that^{\,2}=\ph^{-2}$ in~\eqref{eq:bootstrap}: the most compressed bootstrap
law corresponds to the sample with the largest estimate, $\ph=0.634$. On these
four samples the bootstrap $p$-values are $0.46$, $0.30$, $0.28$ and $0.034$; the
last sample, with $\mathsf T_n=0.140$, is one of the expected $5\%$ of false
rejections.

\begin{figure}[htbp]
\centering
\includegraphics[width=\textwidth]{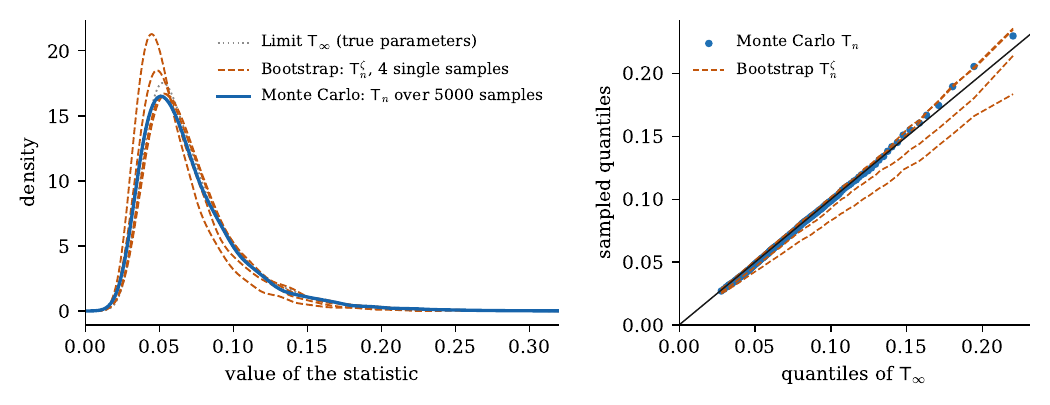}
\caption{Null distribution of the test statistic in Scenario~A, $p^\ast=0.6$,
$n=1000$. Left: kernel density estimates (Gaussian kernel, Silverman's
bandwidth) of the Monte Carlo sample of $\mathsf T_n$ ($5000$ repetitions,
solid), of the multiplier bootstrap samples of $\mathsf T_n^\zeta$ ($5000$
replications) for four single samples (dashed), and of the limit
$\mathsf T_\infty$ computed with the true parameters (dotted). Right:
quantiles of the Monte Carlo and bootstrap laws against those of
$\mathsf T_\infty$.}
\label{fig:null}
\end{figure}

\begin{table}[htbp]
\centering\small
\caption{Quantiles and mean of the sampled null distributions of
Figure~\ref{fig:null}. For the bootstrap, the range over the four samples is
reported.}\label{tab:nullq}
\begin{tabular}{lccccc}
\toprule
 & median & $q_{0.90}$ & $q_{0.95}$ & $q_{0.99}$ & mean\\
\midrule
Limit $\mathsf T_\infty$ & 0.064 & 0.117 & 0.140 & 0.194 & 0.073\\
Monte Carlo $\mathsf T_n$ & 0.063 & 0.116 & 0.142 & 0.206 & 0.072\\
Bootstrap $\mathsf T_n^\zeta$ & 0.053--0.066 & 0.096--0.118 & 0.115--0.143 & 0.166--0.205 & 0.061--0.075\\
\bottomrule
\end{tabular}
\end{table}

\subsection{Level and power}\label{sec:simu-level}

\paragraph{Results under the null.} Table~\ref{tab:size} confirms the
discussion of Section~\ref{sec:intro}. The naive test rejects the null in
every replication, including Scenario~B, where both sources are product
distributions, so it cannot be used. The IBM test holds its nominal level
well in all configurations except Scenario~B with $p^\ast=0.3$ and $n=250$,
where the empirical level is $20.8\%$. This distortion is a weak-separation
effect. When $K_0=0$, \eqref{eq:structureH0} gives
$d(p)=s^2(1-s)^2\nG\Delta^2$ with $s=p^\ast/p$, which is small over the whole
range $p\in[p^\ast,1]$ when $p^\ast$ is small. In about one in five of the
samples of size $250$, $\widehat p_n$ then falls on the boundary $p=1$, as
the large bias and dispersion of $\widehat p_n$ in the table show. The effect
has disappeared at $n=500$. A dependent known source (Scenario~A)
sharpens the contrast. Thus the condition
$F_0\neq F_{0,1}\otimes F_{0,2}$ is not needed asymptotically, but it helps
in small samples. The last column agrees with
Corollary~\ref{thm:null}: once $n\ge500$ (Scenario~A) or $n\ge1000$
(Scenario~B), $\sqrt n\,\mathrm{sd}(\widehat p_n)$ is close to the
asymptotic standard deviation $\sigma$, which was computed by Monte Carlo
integration from the true $F_1,F_2$.

\paragraph{Power.} Table~\ref{tab:power} shows that the power of the IBM test
increases with $n$ against all alternatives, as
Theorem~\ref{thm:bootstrap}(ii) predicts. The price of not observing the
unknown component is substantial when it is rare. With $p^\ast=0.3$, the
IBM test needs about $2000$ observations to reach a power that the oracle
reaches with $250$. With $p^\ast=0.6$ the gap is much smaller, and the IBM
power exceeds $80\%$ from $n=500$ onwards. This agrees with the local
analysis of Remark~\ref{rem:local}: the limiting noise $p^{\ast-1}\xi$ is inflated by the factor $1/p^\ast$
relative to the uncontaminated case, and the direction $\Psi$ is lost. The power figures for Scenario~B with $p^\ast=0.3$ and $n=250$ are
inflated by the size distortion discussed above and should not be read as
evidence of power.

\paragraph{Behaviour under the alternative.} We checked
Theorem~\ref{thm:unified} and Corollary~\ref{thm:alt} in Scenario~A with $p^\ast=0.6$ and a Gaussian copula with
$\rho=0.5$. To match the theory, the integrating measure $G$ was held fixed:
it was the empirical measure of $1500$ points drawn once from $L$. The
pseudo-true values are $p^c=0.563$ and $d_{\min}=1.24\times10^{-3}$. Over $400$
samples of size $n=1000$, the standard deviations of $\sqrt n(\ph-p^c)$ and of
$\sqrt n(d_n(\ph)-d_{\min})$ were $0.631$ and $0.0087$. The asymptotic values
$\sigma_c=0.633$ and $\tau=0.0091$ were computed from
\eqref{eq:infl-alt}.

\input{tab_size}
\input{tab_power}

\FloatBarrier

%% file: tab_size.tex
\begin{table}[t]\centering\small
\caption{Behaviour under $H_0$. Empirical level (in \%) of the IBM test, of the naive BKR test applied to the observed sample and of the oracle BKR test applied to the latent observations from $F$, at nominal level $5\%$; bias and $\sqrt n\times$ standard deviation of $\widehat p_n$, to be compared with the asymptotic value $\sigma$ of Theorem~\ref{thm:null}. The naive test was not run for $n\ge1000$ (--), since it already rejects in every replication for $n\le500$.}\label{tab:size}
\begin{tabular}{llrrrrrr}\toprule
Scenario & $p^\ast$ & $n$ & IBM & naive & oracle & bias$(\widehat p_n)$ & $\sqrt n\,\mathrm{sd}(\widehat p_n)$\\\midrule
A & 0.3 & 250 & 5.4 & 100.0 & 6.2 & +0.008 & 1.16\\
 &  & 500 & 3.7 & 100.0 & 5.2 & +0.002 & 0.73\\
 &  & 1000 & 3.3 & 100.0 & 4.2 & +0.001 & 0.72\\
 &  & 2000 & 5.6 & -- & 4.4 & +0.001 & 0.74\\
\multicolumn{8}{r}{\footnotesize asymptotic $\sigma=0.76$}\\
\addlinespace
A & 0.6 & 250 & 4.4 & 100.0 & 4.3 & 0.000 & 0.63\\
 &  & 500 & 5.1 & 100.0 & 5.0 & 0.000 & 0.64\\
 &  & 1000 & 4.8 & -- & 4.7 & -0.001 & 0.64\\
 &  & 2000 & 4.4 & -- & 5.0 & -0.001 & 0.65\\
\multicolumn{8}{r}{\footnotesize asymptotic $\sigma=0.63$}\\
\addlinespace
B & 0.3 & 250 & 20.8 & 100.0 & 6.4 & +0.146 & 4.21\\
 &  & 500 & 5.7 & 100.0 & 5.4 & +0.019 & 2.14\\
 &  & 1000 & 3.7 & 100.0 & 5.3 & +0.001 & 0.76\\
 &  & 2000 & 5.4 & -- & 4.0 & +0.001 & 0.82\\
\multicolumn{8}{r}{\footnotesize asymptotic $\sigma=0.77$}\\
\addlinespace
B & 0.6 & 250 & 4.6 & 100.0 & 6.2 & -0.003 & 0.66\\
 &  & 500 & 5.3 & 100.0 & 6.0 & -0.001 & 0.61\\
 &  & 1000 & 4.1 & -- & 5.3 & -0.001 & 0.65\\
 &  & 2000 & 3.6 & -- & 5.8 & 0.000 & 0.62\\
\multicolumn{8}{r}{\footnotesize asymptotic $\sigma=0.64$}\\
\bottomrule\end{tabular}\end{table}

%% file: tab_power.tex
\begin{table}[t]\centering\small
\caption{Empirical power (in \%) at level $5\%$ of the IBM test and, in parentheses, of the oracle BKR test computed on the unobservable subsample from $F$.}\label{tab:power}
\begin{tabular}{llrcccc}\toprule
Scenario & $p^\ast$ & $n$ & Gauss $\rho=0.3$ & Gauss $\rho=0.5$ & Gauss $\rho=-0.3$ & Clayton $\theta=1$\\\midrule
A & 0.3 & 250 & 8.5 (72.8) & 14.7 (99.0) & 8.1 (56.0) & 17.1 (99.2)\\
 &  & 500 & 10.8 (93.5) & 24.2 (100.0) & 7.8 (89.9) & 23.8 (100.0)\\
 &  & 1000 & 18.7 (99.8) & 45.2 (100.0) & 14.3 (99.6) & 44.7 (100.0)\\
 &  & 2000 & 32.2 (100.0) & 85.2 (100.0) & 37.4 (100.0) & 81.6 (100.0)\\
\addlinespace
A & 0.6 & 250 & 45.6 (93.4) & 91.4 (100.0) & 56.8 (87.7) & 81.0 (100.0)\\
 &  & 500 & 81.1 (99.9) & 99.8 (100.0) & 86.7 (99.9) & 99.1 (100.0)\\
 &  & 1000 & 98.7 (100.0) & 100.0 (100.0) & 99.9 (100.0) & 100.0 (100.0)\\
 &  & 2000 & 100.0 (100.0) & 100.0 (100.0) & 100.0 (100.0) & 100.0 (100.0)\\
\addlinespace
B & 0.3 & 250 & 27.4 (72.1) & 34.9 (98.3) & 28.1 (53.0) & 35.8 (98.8)\\
 &  & 500 & 11.7 (93.8) & 27.6 (100.0) & 9.8 (86.1) & 26.8 (100.0)\\
 &  & 1000 & 20.7 (99.9) & 52.1 (100.0) & 17.8 (99.7) & 48.5 (100.0)\\
 &  & 2000 & 36.2 (100.0) & 93.8 (100.0) & 43.2 (100.0) & 85.8 (100.0)\\
\addlinespace
B & 0.6 & 250 & 48.2 (94.8) & 92.9 (100.0) & 57.8 (87.1) & 80.2 (100.0)\\
 &  & 500 & 81.9 (99.8) & 99.7 (100.0) & 89.5 (99.9) & 99.3 (100.0)\\
 &  & 1000 & 99.2 (100.0) & 100.0 (100.0) & 99.9 (100.0) & 100.0 (100.0)\\
 &  & 2000 & 100.0 (100.0) & 100.0 (100.0) & 100.0 (100.0) & 100.0 (100.0)\\
\bottomrule\end{tabular}\end{table}

%% file: realdata_section.tex
\section{Real data applications}\label{sec:real}

We present three applications. In the first two (Sections~\ref{sec:real-lymphoma}
and~\ref{sec:real-leukemia}), built from gene expression data, the known source is
exact and the unknown component turns out to be dependent. In the third
(Section~\ref{sec:real-spam}), the unknown component is a stream of spam
emails contaminating legitimate mail, the known source is given by a sample of
verified legitimate emails, and the conclusion is the opposite.

\paragraph{Bivariate contamination with an explicitly known source.} In
two-class gene expression studies, each gene $g$ is either \emph{null}, i.e.\
its expression has the same distribution in both classes, or differentially
expressed. Bivariate contamination data arise naturally by computing, for
each gene, two test statistics on two \emph{disjoint} subsets of patients. Let
the patients of each class be split at random into two halves, and let $S_g^{(1)}$
and $S_g^{(2)}$ be rank statistics computed on halves~$1$ and~$2$. For a null gene,
and patients independent within each class with a continuous expression
distribution, the ranks of the second class in each half form a uniformly random
subset, whatever this distribution. The null law $\pi_h$ of $S_g^{(h)}$ is
therefore known exactly, and computable by enumeration. We use the randomised
version of its cdf \citep{HabigerPena2011},
\begin{equation}\label{eq:randomised}
  U_g^{(h)}=\Pi_h\bigl(S_g^{(h)}-1\bigr)+V_g^{(h)}\,\pi_h\bigl(S_g^{(h)}\bigr),\qquad h=1,2,
\end{equation}
where $\Pi_h$ is the null cdf and the $V_g^{(h)}$ are independent
$\mathrm{Uniform}(0,1)$ variables. For a null gene, $U_g^{(h)}$ is exactly
$\mathrm{Uniform}(0,1)$, and $U_g^{(1)}$ and $U_g^{(2)}$ are independent since they
are computed from disjoint sets of patients. The pairs
$\bm X_g=(U_g^{(1)},U_g^{(2)})$, $g=1,\dots,n$, thus follow the contamination
model~\eqref{eq:model} with weight $p^\ast$ equal to the proportion of
differentially expressed genes, unknown component $F$ the law of the pairs of
differentially expressed genes, and known source
\[
  F_0(u_1,u_2)=u_1u_2,\qquad (u_1,u_2)\in[0,1]^2,
\]
which holds exactly and without any distributional assumption on the
expression levels. Here $K_0=0$, and Assumption~\ref{A:ident} holds as soon as
both marginals of $F$ differ from the uniform distribution. The question
addressed by the IBM test is whether the two coordinates are independent
\emph{among the differentially expressed genes}, which the naive BKR test
applied to all genes cannot answer: by Section~\ref{sec:intro}, the mixture
of the two product laws $F_0$ and $F_1\otimes F_2$ is itself dependent.
With $G=\Ln$ the statistic is invariant under increasing transformations of
the coordinates; the figures display $Z_g^{(h)}=\Phi_{\mathcal N}^{-1}(U_g^{(h)})$,
which is standard normal for null genes.

Two caveats are standard in large-scale inference \citep{Efron2010}. The genes
are not independent, and the theoretical null presumes that no hidden
factor affects the two classes differently; the dependence between genes
affects the variability of the statistics but not the marginal law of the
pairs. In each application we report results over $20$ random splits of the
patients, with $N=2000$ bootstrap replications~\eqref{eq:pvalue} and $\Theta=[0.01,1]$. To check
that the test is reliable at these sample sizes and contamination levels, we also
apply it to \emph{pseudo-null} data: $n$ pairs drawn from the model with
$F_0(u_1,u_2)=u_1u_2$, the estimated weight $\widetilde p$ (the larger of the
two marginal estimates of \citet{Storey2002}, with $\lambda=0.5$), and an
unknown component $\widetilde F_1\otimes\widetilde F_2$ with independent coordinates
and marginals $\widetilde F_h$ obtained by inverting the empirical marginals at
$\widetilde p$, as in~\eqref{eq:family2}. These data satisfy $H_0$ and mimic the
real ones; $200$ such data sets are generated in each case. Table~\ref{tab:real}
summarises the results, and Figure~\ref{fig:real} displays the pairs.

\begin{figure}[t]
\centering
\includegraphics[width=0.92\textwidth]{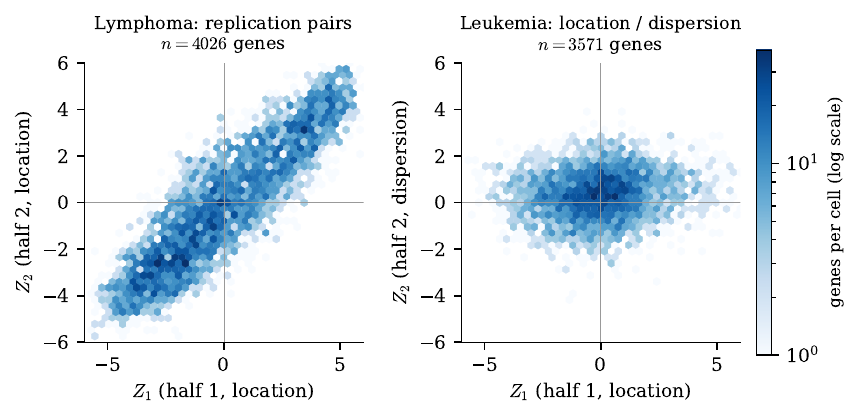}
\caption{Pairs $(Z_g^{(1)},Z_g^{(2)})=(\Phi_{\mathcal N}^{-1}(U_g^{(1)}),\Phi_{\mathcal N}^{-1}(U_g^{(2)}))$
for one random split of the patients. Null genes follow the standard
bivariate normal law. Left: lymphoma data, rank-sum statistics in the two halves.
Right: leukemia data, rank-sum (location) statistic in half~$1$ and
Ansari--Bradley (dispersion) statistic in half~$2$.}
\label{fig:real}
\end{figure}

\begin{table}[t]
\centering\small
\caption{Real data applications. $\widetilde p_1,\widetilde p_2$: marginal estimates of
the weight \citep{Storey2002}; $\mathsf T_n$: median over $20$ random splits (range in
parentheses); $\widehat q_{0.95}$: bootstrap critical value for the first split;
$p$-values are bootstrap $p$-values, $1/2001$ being the smallest attainable
value; last two columns: rejection rates at level $5\%$ over $200$ pseudo-null
data sets satisfying $H_0$.}\label{tab:real}
\setlength{\tabcolsep}{4pt}
\begin{tabular}{lcccccccc}
\toprule
 & & & & & & & \multicolumn{2}{c}{pseudo-null}\\
\cmidrule(lr){8-9}
 & $n$ & $(\widetilde p_1,\widetilde p_2)$ & $\mathsf T_n$ & $\widehat q_{0.95}$ & $p$-value & $\ph$ & IBM & naive\\
\midrule
Lymphoma & 4026 & $(0.67,0.69)$ & 56.6 (45.3--62.5) & 0.052 & $<0.001$ & $1$ & $6.5\%$ & $100\%$\\
Leukemia & 3571 & $(0.37,0.05)$ & 0.62 (0.21--1.17) & 0.035 & $<0.001$ & $1$ & $4.0\%$ & $100\%$\\
\bottomrule
\end{tabular}
\end{table}

\paragraph{Why $\ph=1$ signals dependence.} In both applications the
estimated weight equals $1$ in every split, far above the marginal estimates
$\widetilde p_h$, which are lower bounds for $p^\ast$. On the pseudo-null data,
in contrast, the median of $\ph$ is $0.69$ and $0.38$, respectively, close to
$\widetilde p$, as Corollary~\ref{thm:null} predicts. The boundary value is a
signature of a dependent unknown component when $K_0=0$. By the dependence
form~\eqref{eq:structure}, $D(\cdot\,;L,p)=sK+s(1-s)\Delta$ with
$s=p^\ast/p$, so that
\[
  d(p)=s^2\,\nG{K+(1-s)\Delta}^2 .
\]
Under $H_0$ ($K=0$) the contrast vanishes at $s=1$, i.e.\ at $p=p^\ast$. When
$K\ne0$, the factor $s^2$ favours the smallest value $s=p^\ast$, i.e.\ $p=1$, as
soon as $\nG K>p^\ast\nG{K+(1-p^\ast)\Delta}$. In that case $\mathsf T_n$ coincides
with the BKR statistic of the observed pairs, but its calibration does not:
the naive test rejects $H_0$ on every pseudo-null data set, whereas the IBM
test keeps a level close to $5\%$. A rejection by the IBM test is therefore
evidence of dependence within the unknown component, not an artefact of the
contamination.

\subsection{Lymphoma: replicability of differential expression}\label{sec:real-lymphoma}

The lymphoma data of \citet{Alizadeh2000}, in the preprocessed version
of \citet{ChunKeles2010}, contain the expression of $n=4026$ genes for $42$
patients with diffuse large B-cell lymphoma (DLBCL) and $20$ patients with
follicular lymphoma or chronic lymphocytic leukemia (FL/CLL). In each class
the patients are split into two halves ($21+21$ DLBCL, $10+10$ FL/CLL), and
$S_g^{(h)}$ is the Mann--Whitney statistic \citep{MannWhitney1947} comparing
DLBCL with FL/CLL in half~$h$, with exact null pmf
$\pi_h(s)=c_{10,21}(s)/\binom{31}{10}$, where $c_{m,k}(s)$ is the number of
arrangements of $m+k$ ranks with $s$ pairs in which a FL/CLL value is below a
DLBCL value. Large values of $U_g^{(h)}$ indicate overexpression in DLBCL.
About two thirds of the genes are differentially expressed
($\widetilde p_1=0.67$, $\widetilde p_2=0.69$).

The question is whether, among differentially expressed genes, the evidence
obtained on two independent groups of patients is independent. Independence
would mean that the size of the effect of a gene in one half carries no
information on its size in the other, i.e.\ that differential expression is
not reproducible beyond its mere occurrence. Figure~\ref{fig:real} (left)
suggests the opposite, and the IBM test rejects $H_0$ in all $20$ splits,
with $\mathsf T_n$ between $45$ and $63$, against a bootstrap critical value of
about $0.05$. On pseudo-null data with the same marginals, the IBM test
rejects in $6.5\%$ of the cases, so the rejection is not an artefact of the
contamination: the effects of differentially expressed genes are strongly
reproducible across groups of patients, which is the expected behaviour of
genes whose effect sizes vary from gene to gene.

\subsection{Leukemia: location versus dispersion effects}\label{sec:real-leukemia}

The leukemia data of \citet{Golub1999}, in the version of \citet{EfronHastie2016},
contain the expression of $n=3571$ genes for $47$ patients with acute
lymphoblastic leukemia (ALL) and $25$ with acute myeloid leukemia (AML). The
patients are split into halves ($23+24$ ALL, $12+13$ AML). In half~$1$,
$S_g^{(1)}$ is the Mann--Whitney statistic, which detects a shift in
\emph{location} between AML and ALL. In half~$2$, $S_g^{(2)}$ is the
Ansari--Bradley statistic \citep{AnsariBradley1960}
$\sum_{i\in\mathrm{AML}}a(R_i)$ with scores $a(r)=\min(r,N+1-r)$, where the $R_i$
are the ranks in the pooled second half ($24$ ALL and $13$ AML, $N=37$), which
detects a difference in \emph{dispersion}. Its exact null pmf is that of the
sum of a uniformly random subset of $13$ scores among $a(1),\dots,a(37)$,
computed by recursion. It is oriented so that large values of $U_g^{(2)}$ indicate a larger
dispersion in AML. About $37\%$ of the genes show a location effect
($\widetilde p_1=0.37$) and $5\%$ a dispersion effect ($\widetilde p_2=0.05$), so that the
second marginal of $F$ is close to uniform; the weight is identified through the first
coordinate.

The question is whether, among genes that behave differently in the two
leukemias, the location effect and the dispersion effect are independent.
The overall Spearman correlation between the two coordinates is small
($0.12$ for the first split; range $0.05$--$0.17$), and
Figure~\ref{fig:real} (right) shows no obvious structure. The IBM test
nevertheless rejects $H_0$ in all $20$ splits, with $\mathsf T_n$ between $0.21$
and $1.17$, against a bootstrap critical value of $0.035$; on pseudo-null data
its rejection rate is $4.0\%$. Among differentially expressed genes, the
location effect and the dispersion effect are therefore dependent. A simple
description of this dependence is asymmetric: for the first split, the
$421$ genes with $Z_g^{(1)}>2$ (overexpressed in AML) have an average
$Z_g^{(2)}$ of $0.60$, against $0.23$ for the $541$ genes with $Z_g^{(1)}<-2$ and
$0.32$ for genes with $|Z_g^{(1)}|<0.5$. Genes overexpressed in AML thus also tend
to be more variable across AML patients, a mean--variance relation
that a correlation coefficient computed on all genes barely reveals.

\paragraph{A cautionary example.} The prostate cancer data of
\citet{Singh2002} ($n=6033$ genes, $50$ healthy and $52$ cancer patients),
analysed with the replication design, illustrate the weak-separation effect
discussed in Section~\ref{sec:simu}. There only about $8\%$ of the genes are
differentially expressed ($\widetilde p_1=0.08$, $\widetilde p_2=0.08$). Since $K_0=0$,
the separation between the zero of $d$ at $p^\ast$ and its value at $p=1$ is of
order $p^{\ast2}(1-p^\ast)^2\nG\Delta^2$, whereas the sampling noise of
$\delta_n$ near $t^\ast=1/p^\ast$ is inflated by the factor $t^{\ast2}\approx140$. On
pseudo-null data with these characteristics, $\ph=1$ in every replication and
the IBM test rejects $H_0$ in all of them, so the rejection observed on the
real data ($\mathsf T_n$ between $0.80$ and $2.43$) cannot be interpreted. The test
should not be used when the estimated weight is small and $F_0$ is a product
distribution, unless the number of observations is much larger; a pseudo-null
experiment as above is a simple way to check this in practice.

\subsection{Spam filtering: contamination of legitimate email}\label{sec:real-spam}

Unsolicited and fraudulent email accounts for a large share of the traffic
received by mail servers. A server, or a user, typically has an archive of
\emph{verified} legitimate messages, which determines the law of normal
activity, whereas incoming messages are unlabelled: whether a message is spam
is learnt later, and only partially, through user reports. This is a
contamination model in which the known source is the law of legitimate mail
and the unknown component is the law of spam, with a large weight $p^\ast$. We
use the Spambase data \citep{Spambase}, distributed with the R package
\texttt{kernlab} \citep{kernlab}: $4601$ emails received at Hewlett-Packard Labs,
of which $1813$ ($39.4\%$) are spam, each described by $57$ features
(frequencies of words and characters, and statistics of runs of capital
letters). The labels are used only to validate the conclusions.

\paragraph{Construction.} A random sample of $m=1788$ legitimate emails plays
the role of the verified archive (the training data), and the known source
$F_0$ is taken to be its bivariate empirical cdf: conditionally on the archive,
$F_0$ is a fixed, known function, as the method requires. The
incoming batch consists of the $1000$ other legitimate emails and $667$ spam
emails, which keeps the spam proportion of the collection: $n=1667$ and
$p^\ast=0.40$. The legitimate emails of the batch, however, follow the law of
legitimate mail, which the archive only estimates: the batch satisfies the
contamination model with this law, not with its empirical version, and the
difference, of order $m^{-1/2}$, enters the statistic multiplied by $\sqrt n$. It is
negligible when $m/n$ is large, but here $m/n\approx1$, so the
multiplier bootstrap includes the error of $F_0$. Perturbing $F_0$ by $w_0$ moves the
inverted cdf $F(\cdot\,;1/t)=tL+(1-t)F_0$ by $(1-t)w_0$, hence moves $D(t)$ by
$(1-t)\bigl(w_0-w_0^{(1)}\otimes F_2(\cdot\,;1/t)-F_1(\cdot\,;1/t)\otimes w_0^{(2)}\bigr)$.
The bootstrap noise~\eqref{eq:bootstrap} (null form) therefore receives the
additional, independent term $\sqrt{n/m}\,(1-\that)(I-\widehat\Pi)\,\xi_m^{\zeta'}$, where
$\xi_m^{\zeta'}$ is the multiplier process computed on the archive with independent
multipliers $\zeta'$ and the same plug-in marginals $\widehat F_1,\widehat F_2$. This term
vanishes when $m/n\to\infty$, in which case treating the empirical cdf of the
archive as the exact source is legitimate. Calibration is
checked with pseudo-null data, as in Sections~\ref{sec:real-lymphoma}--\ref{sec:real-leukemia}, but now directly: in pseudo-null batches, one coordinate of
the spam emails is randomly permuted, so that the spam component has
independent coordinates with the observed marginals and $H_0$ holds exactly,
everything else being unchanged.

\paragraph{Repeated constructions.} One \emph{construction} consists of (i)
splitting the $2788$ legitimate emails at random into the $1000$ emails of the
batch and the $1788$ emails of the archive, (ii) drawing $667$ of the $1813$
spam emails at random without replacement, and (iii) computing, on the
resulting batch, the IBM test, the naive BKR test, and the oracle BKR test on
the $667$ spam emails. A pseudo-null construction is the same, except that one
coordinate of the $667$ spam emails is randomly permuted among them before the
batch is formed. All rates below are proportions of rejections at level $5\%$
over $100$ independent constructions, and medians are taken over the same
$100$ constructions. Bootstrap $p$-values use $N=400$ replications in each
construction and $N=4000$ for the single analysis of Figure~\ref{fig:spam}. Since all constructions reuse the same $4601$ emails, the
rates obtained on the real data are not estimates of a level or of a power: they
measure how stable the conclusions are with respect to the choice of the archive
and of the batch. On pseudo-null data, in contrast, each permutation produces a
new data set satisfying $H_0$, and the rejection rate estimates the level of
the test for these marginals. The batch of Figure~\ref{fig:spam} is one further
construction, used to report a single analysis.

\begin{figure}[t]
\centering
\includegraphics[width=\textwidth]{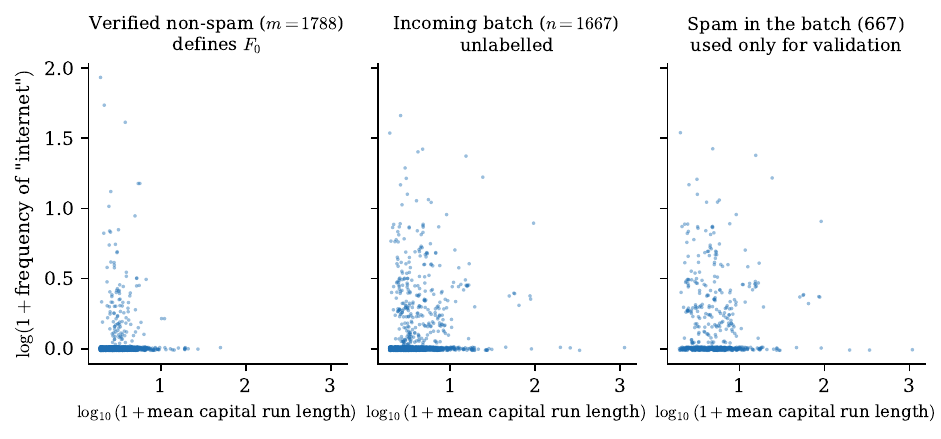}
\caption{Spambase data, frequency of the word ``internet'' against the mean
length of runs of capital letters (log scales, small jitter). Left: archive of
verified legitimate emails defining $F_0$. Centre: unlabelled incoming batch
($1000$ legitimate and $667$ spam emails). Right: the spam emails of the batch,
which are not identifiable in practice.}
\label{fig:spam}
\end{figure}

\paragraph{Results.} We consider two pairs of features. The first pair is the
frequency of the word ``internet'' and the mean length of runs of capital
letters (Figure~\ref{fig:spam}). Both are typical of spam, but there is no
obvious reason for them to be linked within spam, whereas they are dependent
among legitimate emails (BKR test on the archive: $p$-value $<0.001$), so that
$K_0\ne0$. For the batch of Figure~\ref{fig:spam}, $\mathsf T_n=0.057$, the
bootstrap critical value is $\widehat q_{0.95}=0.124$ and the $p$-value is $0.43$:
independence among spam emails is not rejected. The weight estimate is
$\ph=0.46$. Applied directly to the unlabelled batch, the classical BKR test
(the naive test of Section~\ref{sec:simu}) finds strong dependence ($p$-value
$<0.001$), but this reflects the mixture of legitimate and spam emails. Applied to
the $667$ spam emails alone (the oracle test), which is possible here only because
the labels are known, the same test finds no evidence of dependence ($p$-value
$0.60$), in agreement with the IBM test. Over the $100$ constructions, the IBM test rejects in
$8\%$ of them and the median of $\ph$ is $0.40$: the conclusion does not depend
on the particular split of the data. On pseudo-null batches it rejects in $9\%$
of the cases, which estimates its level, against $100\%$ for the naive test in
both situations. The behaviour on the real batches is thus that expected under
$H_0$, in agreement with the oracle test, which rejects in $1\%$ of the
constructions.
The weight is identified only through the bivariate structure. Marginal
estimates, obtained with the estimator of \citet{PatraSen2016} applied to each
coordinate with the corresponding marginal of the archive, are $0.11$ for the
frequency of ``internet'' and $0.35$ for the capital run length (medians over
$30$ constructions). They estimate only the part of the spam that differs from
legitimate mail in each coordinate: for instance, $66\%$ of the spam emails do
not contain the word ``internet'', against $93\%$ of the legitimate ones. Under
$H_0$, the independence of the coordinates of the unknown component identifies
the weight (Assumption~\ref{A:ident}), and $\ph$ recovers $p^\ast=0.40$.
The correction for the error of the archive matters here: without it, i.e.\
treating the empirical cdf of the archive as the exact source, the IBM test
rejects in $18\%$ of the pseudo-null batches and in $20\%$ of the real ones.

The second pair is the frequency of the character ``!'' and that of the
character ``\textdollar'', which are clearly dependent within spam (the oracle test
rejects in all $100$ constructions). For the batch of Figure~\ref{fig:spam},
the IBM test rejects ($\mathsf T_n=0.255$, $\widehat q_{0.95}=0.234$, $p$-value $0.038$). Over the $100$
constructions, however, it rejects in only $18\%$ of them, so that this
rejection is not stable with respect to the split, while its level
on pseudo-null batches is $7\%$ (without the correction for the error of the
archive, $48\%$ and $11\%$, respectively). At this sample size the test is therefore
reliable but not powerful against this alternative. The dependence is weak
(Spearman correlation $0.11$ among spam emails); it is easily detected from $667$
spam emails observed alone, but it is diluted by the $1000$ legitimate emails of
the batch, by the error of the archive, and by the estimation of the weight, part
of which it absorbs: $\ph$ has median $0.36$ (the marginal estimates are $0.31$
and $0.23$).

These results complement the gene expression applications. When the
contamination weight is large, the test keeps a level close to the nominal
one even with a small reference sample, and it does not report dependence
where the unknown component has none, unlike tests applied to the observed
sample.

%% file: main.bbl
\begin{thebibliography}{39}
\providecommand{\natexlab}[1]{#1}
\providecommand{\url}[1]{\texttt{#1}}
\expandafter\ifx\csname urlstyle\endcsname\relax
  \providecommand{\doi}[1]{doi: #1}\else
  \providecommand{\doi}{doi: \begingroup \urlstyle{rm}\Url}\fi

\bibitem[Alexander(1984)]{Alexander1984}
K.~S. Alexander.
\newblock Probability inequalities for empirical processes and a law of the
  iterated logarithm.
\newblock \emph{Ann. Probab.}, 12\penalty0 (4):\penalty0 1041--1067, 1984.

\bibitem[Alizadeh et~al.(2000)Alizadeh, Eisen, Davis, et~al.]{Alizadeh2000}
A.~A. Alizadeh, M.~B. Eisen, R.~E. Davis, et~al.
\newblock Distinct types of diffuse large {B}-cell lymphoma identified by gene
  expression profiling.
\newblock \emph{Nature}, 403:\penalty0 503--511, 2000.

\bibitem[Ansari and Bradley(1960)]{AnsariBradley1960}
A.~R. Ansari and R.~A. Bradley.
\newblock Rank-sum tests for dispersions.
\newblock \emph{Ann. Math. Statist.}, 31:\penalty0 1174--1189, 1960.

\bibitem[Blum et~al.(1961)Blum, Kiefer, and Rosenblatt]{BKR1961}
J.~R. Blum, J.~Kiefer, and M.~Rosenblatt.
\newblock Distribution free tests of independence based on the sample
  distribution function.
\newblock \emph{Ann. Math. Statist.}, 32:\penalty0 485--498, 1961.

\bibitem[Bordes et~al.(2006{\natexlab{a}})Bordes, Delmas, and
  Vandekerkhove]{BDV2006}
L.~Bordes, C.~Delmas, and P.~Vandekerkhove.
\newblock Semiparametric estimation of a two-component mixture model where one
  component is known.
\newblock \emph{Scand. J. Stat.}, 33\penalty0 (4):\penalty0 733--752,
  2006{\natexlab{a}}.

\bibitem[Bordes et~al.(2006{\natexlab{b}})Bordes, Mottelet, and
  Vandekerkhove]{BMV2006}
L.~Bordes, S.~Mottelet, and P.~Vandekerkhove.
\newblock Semiparametric estimation of a two-component mixture model.
\newblock \emph{Ann. Statist.}, 34\penalty0 (3):\penalty0 1204--1232,
  2006{\natexlab{b}}.

\bibitem[B\"ucher and Dette(2010)]{BucherDette2010}
A.~B\"ucher and H.~Dette.
\newblock A note on bootstrap approximations for the empirical copula process.
\newblock \emph{Statist. Probab. Lett.}, 80\penalty0 (23--24):\penalty0
  1925--1932, 2010.

\bibitem[Chun and Kele\c{s}(2010)]{ChunKeles2010}
H.~Chun and S.~Kele\c{s}.
\newblock Sparse partial least squares regression for simultaneous dimension
  reduction and variable selection.
\newblock \emph{J. R. Stat. Soc. Ser. B. Stat. Methodol.}, 72\penalty0
  (1):\penalty0 3--25, 2010.

\bibitem[Deheuvels(1981)]{Deheuvels1981}
P.~Deheuvels.
\newblock An asymptotic decomposition for multivariate distribution-free tests
  of independence.
\newblock \emph{J. Multivariate Anal.}, 11:\penalty0 102--113, 1981.

\bibitem[Dudley(1999)]{Dudley1999}
R.~M. Dudley.
\newblock \emph{Uniform Central Limit Theorems}.
\newblock Cambridge University Press, Cambridge, 1999.

\bibitem[Durbin(1973)]{Durbin1973}
J.~Durbin.
\newblock Weak convergence of the sample distribution function when parameters
  are estimated.
\newblock \emph{Ann. Statist.}, 1:\penalty0 279--290, 1973.

\bibitem[Dynkin and Mandelbaum(1983)]{DynkinMandelbaum1983}
E.~B. Dynkin and A.~Mandelbaum.
\newblock Symmetric statistics, {P}oisson point processes, and multiple
  {W}iener integrals.
\newblock \emph{The Annals of Statistics}, 11:\penalty0 739--745, 1983.

\bibitem[Efron(2010)]{Efron2010}
B.~Efron.
\newblock \emph{Large-Scale Inference: Empirical {B}ayes Methods for
  Estimation, Testing, and Prediction}.
\newblock Cambridge University Press, Cambridge, 2010.

\bibitem[Efron and Hastie(2016)]{EfronHastie2016}
B.~Efron and T.~Hastie.
\newblock \emph{Computer Age Statistical Inference}.
\newblock Cambridge University Press, Cambridge, 2016.

\bibitem[Efron et~al.(2001)Efron, Tibshirani, Storey, and Tusher]{Efron2001}
B.~Efron, R.~Tibshirani, J.~D. Storey, and V.~Tusher.
\newblock Empirical {B}ayes analysis of a microarray experiment.
\newblock \emph{J. Amer. Statist. Assoc.}, 96:\penalty0 1151--1160, 2001.

\bibitem[Genest and R\'emillard(2004)]{GenestRemillard2004}
C.~Genest and B.~R\'emillard.
\newblock Tests of independence and randomness based on the empirical copula
  process.
\newblock \emph{Test}, 13:\penalty0 335--369, 2004.

\bibitem[Gin\'e and Zinn(1990)]{GineZinn1990}
E.~Gin\'e and J.~Zinn.
\newblock Bootstrapping general empirical measures.
\newblock \emph{Ann. Probab.}, 18\penalty0 (2):\penalty0 851--869, 1990.

\bibitem[Golub et~al.(1999)Golub, Slonim, Tamayo, et~al.]{Golub1999}
T.~R. Golub, D.~K. Slonim, P.~Tamayo, et~al.
\newblock Molecular classification of cancer: class discovery and class
  prediction by gene expression monitoring.
\newblock \emph{Science}, 286:\penalty0 531--537, 1999.

\bibitem[Habiger and Pe\~na(2011)]{HabigerPena2011}
J.~D. Habiger and E.~A. Pe\~na.
\newblock Randomised {P}-values and nonparametric procedures in multiple
  testing.
\newblock \emph{J. Nonparametr. Stat.}, 23\penalty0 (3):\penalty0 583--604,
  2011.

\bibitem[Hoeffding(1948)]{Hoeffding1948}
W.~Hoeffding.
\newblock A non-parametric test of independence.
\newblock \emph{Ann. Math. Statist.}, 19:\penalty0 546--557, 1948.

\bibitem[Hopkins et~al.(1999)Hopkins, Reeber, Forman, and Suermondt]{Spambase}
M.~Hopkins, E.~Reeber, G.~Forman, and J.~Suermondt.
\newblock Spambase.
\newblock UCI Machine Learning Repository, 1999.

\bibitem[Imhof(1961)]{Imhof1961}
J.~P. Imhof.
\newblock Computing the distribution of quadratic forms in normal variables.
\newblock \emph{Biometrika}, 48:\penalty0 419--426, 1961.

\bibitem[Karatzoglou et~al.(2004)Karatzoglou, Smola, Hornik, and
  Zeileis]{kernlab}
A.~Karatzoglou, A.~Smola, K.~Hornik, and A.~Zeileis.
\newblock kernlab -- an {S4} package for kernel methods in {R}.
\newblock \emph{J. Stat. Softw.}, 11\penalty0 (9):\penalty0 1--20, 2004.

\bibitem[Kiefer(1961)]{Kiefer1961}
J.~Kiefer.
\newblock On large deviations of the empiric d.f. of vector chance variables
  and a law of the iterated logarithm.
\newblock \emph{Pacific J. Math.}, 11:\penalty0 649--660, 1961.

\bibitem[Kojadinovic and Holmes(2009)]{KojadinovicHolmes2009}
I.~Kojadinovic and M.~Holmes.
\newblock Tests of independence among continuous random vectors based on
  {C}ram\'er--von {M}ises functionals of the empirical copula process.
\newblock \emph{J. Multivariate Anal.}, 100:\penalty0 1137--1154, 2009.

\bibitem[Kojadinovic et~al.(2011)Kojadinovic, Yan, and Holmes]{KYH2011}
I.~Kojadinovic, J.~Yan, and M.~Holmes.
\newblock Fast large-sample goodness-of-fit tests for copulas.
\newblock \emph{Statist. Sinica}, 21:\penalty0 841--871, 2011.

\bibitem[Kosorok(2008)]{Kosorok2008}
M.~R. Kosorok.
\newblock \emph{Introduction to Empirical Processes and Semiparametric
  Inference}.
\newblock Springer, New York, 2008.

\bibitem[Mann and Whitney(1947)]{MannWhitney1947}
H.~B. Mann and D.~R. Whitney.
\newblock On a test of whether one of two random variables is stochastically
  larger than the other.
\newblock \emph{Ann. Math. Statist.}, 18:\penalty0 50--60, 1947.

\bibitem[Milhaud et~al.(2022)Milhaud, Pommeret, Salhi, and
  Vandekerkhove]{MPSV2022}
X.~Milhaud, D.~Pommeret, Y.~Salhi, and P.~Vandekerkhove.
\newblock Semiparametric two-sample admixture components comparison test: the
  symmetric case.
\newblock \emph{J. Statist. Plann. Inference}, 2022.

\bibitem[Milhaud et~al.(2024{\natexlab{a}})Milhaud, Pommeret, Salhi, and
  Vandekerkhove]{MPSV2024}
X.~Milhaud, D.~Pommeret, Y.~Salhi, and P.~Vandekerkhove.
\newblock Two-sample contamination model test.
\newblock \emph{Bernoulli}, 30\penalty0 (1), 2024{\natexlab{a}}.
\newblock \doi{10.3150/23-BEJ1593}.

\bibitem[Milhaud et~al.(2024{\natexlab{b}})Milhaud, Pommeret, Salhi, and
  Vandekerkhove]{MPSV2024b}
X.~Milhaud, D.~Pommeret, Y.~Salhi, and P.~Vandekerkhove.
\newblock Contamination-source based {K}-sample clustering.
\newblock \emph{J. Mach. Learn. Res.}, 25, 2024{\natexlab{b}}.

\bibitem[Patra and Sen(2016)]{PatraSen2016}
R.~K. Patra and B.~Sen.
\newblock Estimation of a two-component mixture model with applications to
  multiple testing.
\newblock \emph{J. R. Stat. Soc. Ser. B. Stat. Methodol.}, 78\penalty0
  (4):\penalty0 869--893, 2016.

\bibitem[Pommeret and Vandekerkhove(2019)]{PommeretVandekerkhove2019}
D.~Pommeret and P.~Vandekerkhove.
\newblock Semiparametric density testing in the contamination model.
\newblock \emph{Electron. J. Stat.}, 13, 2019.

\bibitem[R\'emillard and Scaillet(2009)]{RemillardScaillet2009}
B.~R\'emillard and O.~Scaillet.
\newblock Testing for equality between two copulas.
\newblock \emph{J. Multivariate Anal.}, 100:\penalty0 377--386, 2009.

\bibitem[Shorack and Wellner(1986)]{ShorackWellner1986}
G.~R. Shorack and J.~A. Wellner.
\newblock \emph{Empirical Processes with Applications to Statistics}.
\newblock Wiley, New York, 1986.

\bibitem[Singh et~al.(2002)Singh, Febbo, Ross, et~al.]{Singh2002}
D.~Singh, P.~G. Febbo, K.~Ross, et~al.
\newblock Gene expression correlates of clinical prostate cancer behavior.
\newblock \emph{Cancer Cell}, 1:\penalty0 203--209, 2002.

\bibitem[Storey(2002)]{Storey2002}
J.~D. Storey.
\newblock A direct approach to false discovery rates.
\newblock \emph{J. R. Stat. Soc. Ser. B. Stat. Methodol.}, 64\penalty0
  (3):\penalty0 479--498, 2002.

\bibitem[Stute et~al.(1993)Stute, Gonz\'alez~Manteiga, and
  Presedo~Quindimil]{Stute1993}
W.~Stute, W.~Gonz\'alez~Manteiga, and M.~Presedo~Quindimil.
\newblock Bootstrap based goodness-of-fit tests.
\newblock \emph{Metrika}, 40:\penalty0 243--256, 1993.

\bibitem[van~der Vaart and Wellner(1996)]{vdVW1996}
A.~W. van~der Vaart and J.~A. Wellner.
\newblock \emph{Weak Convergence and Empirical Processes}.
\newblock Springer, New York, 1996.

\end{thebibliography}
